\theoremstyle{plain}
\newtheorem{thm}{Theorem}[section]
\newtheorem{lem}[thm]{Lemma}
\newtheorem{prop}[thm]{Proposition}
\newtheorem{cor}[thm]{Corollary}
\newtheorem*{Chernoff:inequality}{Chernoff's inequality}
\newcommand{\C}{{\mathbb C}}
\newcommand{\N}{{\mathbb N}}
\newcommand\pla{^{p,\ell}_{\alpha}}
\newcommand\dla{^{2,\ell}_{\alpha}}
\newcommand\la{^{\ell}_{\alpha}}
\newcommand{ \hvla}{\mathfrak{h}^{\ell}_{\varphi,\alpha}}
\newcommand{ \hbla}{\mathfrak{h}^{\ell}_{b,\alpha}}
\renewcommand{\Re}{\operatorname{Re}}
\DeclareMathOperator*{\esssup}{ess\,sup}
\title[Small Hankel operators on generalized Fock spaces]{Small Hankel operators on generalized Fock spaces}
\author{Carme Cascante}
\address{C. Cascante: Departament de Matem\`atiques i
    Inform\`atica, Universitat  de Barcelona,
     Gran Via 585, 08071 Barcelona, Spain.
 BGSMath.}
\email{cascante@ub.edu}
\author{Joan F\`abrega}
\address{J. F\`abrega: Departament de Matem\`atiques i
    Inform\`atica, Universitat  de Barcelona,
     Gran Via 585, 08071 Barcelona, Spain}
\email{joan$_{-}$fabrega@ub.edu}
\author{Daniel Pascuas}
\address{D. Pascuas: Departament de Matem\`atiques i
    Inform\`atica, Universitat  de Barcelona,
     Gran Via 585, 08071 Barcelona, Spain}
\email{daniel$_{-}$pascuas@ub.edu}
\author{Jos\'e A. Pel\'aez}
\address{J. A. Pel\'aez: Departamento de An\'alisis
      Matem\'atico, Facultad de Ciencias,
       29071 M\'alaga, Spain}
 \email{japelaez@uma.es}
\keywords{small Hankel operators, Fock spaces,
    reproducing kernels, bilinear form}
\subjclass[2010]{30H20; 47B35; 47B10}
\date{\today}
\thanks{The research of the first three authors was supported in part by
        Ministerio de Econom\'{\i}a y Competitividad, Spain,   projects MTM2014-51834-P and MTM2015-69323-REDT, and Generalitat de Catalunya, project
2014SGR289.\newline
    The research of the fourth author was supported in part by
    Ministerio de Econom\'{\i}a y Competitividad, Spain,   projects
        MTM2014-52865-P, and MTM2015-69323-REDT; La Junta de Andaluc{\'i}a,
        project FQM210}
\begin{document}

\begin{abstract}
    We consider Fock spaces $F\pla$ of entire functions on $\C$ associated to the weights $e^{-\alpha |z|^{2\ell}}$, where $\alpha>0$ and $\ell$ is a positive integer. We compute explicitly the corresponding Bergman kernel associated to $F\dla$  and, using an adequate factorization of this kernel, we characterize the boundedness and the compactness of the small Hankel operator $\hbla$ on $F\pla$. Moreover, we also determine when  $\hbla$  is a Hilbert-Schmidt operator on $F\dla$.
\end{abstract}
\maketitle

\section{Introduction}

Let  $\alpha>0$. For $1\le p<\infty$, we denote by  $L\pla$ the space of all measurable functions $f$ on $\C$ such that
\[
\|f\|_{L\pla}^p:=\int_\C \left|f(z) e^{-\alpha|z|^{2\ell}/2}\right|^p \,d\nu(z) < \infty,
\]
where   $d\nu$ denotes the Lebesgue measure on $\C$.

For $p=\infty$, $L^{\infty,\ell}_{\alpha}$ denotes the space of all measurable functions $f$ on $\C$ such that
\[
\|f\|_{L^{\infty,\ell}_{\alpha}}:= \esssup_{z\in\C} |f(z) e^{-\alpha|z|^{2\ell}/2}| < \infty.
\]
Note that $L\pla = L^p(\C, e^{-p\alpha |z|^{2\ell}/2}\, d\nu)$, $1\le p < \infty$.
So, for $1\le p\le\infty$,
$(L\pla, \|\,\cdot\,\|_{L\pla})$ is a Banach space
 and $L\dla$ is a Hilbert space with
the inner product
\[
\langle f,\,g\rangle\la:=\int_{\C}f(z)\overline{g(z)}\, e^{-\alpha |z|^{2\ell}}\,d\nu(z)
\quad(f,g\in L\dla).
\]

 The generalized Fock spaces are defined to be
\[
F\pla := H(\C) \cap L\pla \qquad (1\le p \leq \infty),
\]
where $H(\C)$ denotes the space of entire functions. It is well known that the space of holomorphic polynomials is dense in $F\pla$ for $p<\infty$.

If $p=2$, $F\dla$ is  a Hilbert space. We will denote by $P\la$ the orthogonal projection from $L\dla$ to $F\dla$, which is an integral operator whose kernel is  $K\la$, the Bergman reproducing kernel for $F\dla$.

It is also convenient to consider the little Fock space
\[
\mathfrak{f}^{\infty,\ell}_{\alpha}:=\bigl\{\,f\in H(\C)\,:\,\lim_{|z|\to\infty}|f(z)|e^{-\alpha|z|^{2\ell}/2}=0\,\bigr\},
\]
which is the closure of the space of all holomorphic polynomials in $F^{\infty,\ell}_{\alpha}$.

Recall that for $\ell=1$  one obtains the classical
Fock spaces $F^p_{\alpha}$ and $\mathfrak{f}^\infty_{\alpha}$.

The main goal of this paper is to characterize the boundedness and the compactness of the small Hankel operators
$$
\hbla(f):=\overline{P\la(b\overline{f})}
$$
on  $F\pla$ for the whole range $1\le p<\infty$.

For the classical case $\ell=1$ and $p=2$, it is  well known  that, if $b\in
{F}^2_{\alpha}$, the small Hankel operator
$
\mathfrak{h}^1_{b,\alpha}(f):=\overline{P^{1}_\alpha(b\overline{f})}
$
is
bounded (compact)  from $F^2_\alpha$ to
$\overline{F^2_\alpha}$ if and only if $b\in
F^\infty_{\alpha/2}$  ($b\in
\mathfrak{f}^\infty_{\alpha/2}$).
Moreover, $\mathfrak{h}^1_{b,\alpha}$ is a Hilbert-Schmidt operator if and only if $b\in F^2_{\alpha/2}$
(see~{\cite{janson-peetre-rochberg}
    and~{\cite{Zhu2012}}}).

Up to our knowledge, there are not known results on small Hankel operators for  $\ell>1$.
This is not the case for the big Hankel operator  $\mathcal{H}_{\overline{b}}(f):= \overline{b}f- P^1_\alpha( \overline{b}f)$.
 In  \cite{bommier-youssfi} (see also  \cite{constantin--ortega-cerda}) the authors  prove that   $\mathcal{H}_{\overline{b}}$
 is  a bounded operator on $F^{2,\ell}_\alpha$ if and only if $b'(z)(1+|z|)^{1-\ell}\in L^\infty$, that is, $b$ is a polynomial of degree at most $\ell$.
 It is also worth mentioning \cite{SeiYouJGA2011}, where are described  the bounded, compact and Schatten class big Hankel  operators on Hilbert Fock spaces induced by radial
 rapidly decreasing weights.

 Observe that $(1+|z|)^{1-\ell}\simeq (\Delta |z|^{2\ell})^{-1/2}$, $|z|\ge 1$. It is well known that in the general theory of Fock spaces $F^{p}_\phi$, the Laplacian of the subharmonic weight $\phi$ plays an important role (see, for instance, the recent papers \cite{constantin-pelaez} and \cite{oliver-pascuas}  and the references therein).
A natural question from those observations, which will be solved by the main results of this paper,
 is whether or not the
boundedness  of $\hbla$ on $F\dla$ is described
by conditions on $b$ involving  $\Delta |z|^{2\ell}$.

In order to introduce a natural space of symbols to study the small Hankel operator acting on $F\pla$, notice that
if  $\hbla$  is bounded on $F\pla$,
for some $b\in H(\C)$, then $b=\overline {\hbla(1)}\in F\pla$.
For the classical case, we have
$F^p_\alpha\subset F^\infty_\alpha$, if $1\le p<  \infty$.
Those considerations suggest that the appropriate space of symbols in this classical setting is
 $F^\infty_\alpha$.
 When $\ell>1$ the
inclusion $F\pla\subset F^{\infty,\ell}_\alpha$ is  no longer true (see for instance  \cite[Corollary~2]{ConsPelJGA2016}).
Instead, for any function $b$ in $F\pla$ the pointwise estimate
$$
|b(z)|\lesssim \|b\|_{F\pla}
(1+|z|)^{(2\ell-2)/p}e^{\alpha|z|^{2\ell}/2}
$$
holds
(see~{\cite[Lemma 19(a)]{marco-massaneda-ortega}}).
Hence, in the general setting we consider  the space of holomorphic symbols given by
\begin{equation*}
H^{\infty,\ell}_\alpha:=\left\{b\in H(\C):\,|b(z)|=O\bigl((1+|z|)^{2\ell-2}e^{\alpha|z|^{2\ell}/2}\bigr)\right\}.
\end{equation*}

Assuming $b\in H^{\infty,\ell}_\alpha$, the
operator $\hbla$  is well defined on the space $E$ of entire functions of order $\ell$ and finite type, that is,
\begin{equation}\label{eqn:E}
E:=\{f\in H(\C):\,|f(z)|=O(e^{\beta|z|^\ell}),\quad\text{for some }\,\, \beta>0\}.
\end{equation}

Since $E$ contains the space of the holomorphic polynomials,  $E$ is dense in  $\mathfrak{f}^{\infty,\ell}_\alpha$ and in $F\pla$, for any $p<\infty$.

Our main results are the following.

\begin{thm}\label{thm:main1}
Let $\alpha>0$, $\ell\in \N$, $b\in H^{\infty,\ell}_\alpha$ and $1\le p<\infty$. Then
  $\hbla$ is a bounded operator  from  $F\pla$  to $\overline{F\pla}$  if and only if
 $b\in F^{\infty,\ell}_{\alpha/2}$.
 In such  case,
$
\|\hbla\|_{F\pla}\simeq
\|b\|_{F^{\infty,\ell}_{\alpha/2}}.
$

Analogously, $\hbla$ is a bounded operator  from
 $\mathfrak{f}^{\infty,\ell}_\alpha$ to $\overline{\mathfrak{f}^{\infty,\ell}_\alpha}$
  if and only if
  $b\in F^{\infty,\ell}_{\alpha/2}$ and
 $ \|\hbla\|_{\mathfrak{f}^{\infty,\ell}_\alpha}\simeq\|b\|_{F^{\infty,\ell}_{\alpha/2}}.$
\end{thm}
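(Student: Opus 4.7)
The first step is to reformulate the statement as boundedness of a bilinear form. Since $P\la$ is self-adjoint on $L\dla$, for $f\in E$ and $g$ in a dense subspace of $F^{p',\ell}_\alpha$,
\[
\langle\overline{\hbla(f)},g\rangle\la=\langle P\la(b\bar f),g\rangle\la=\int_{\C}b(z)\overline{f(z)g(z)}\,e^{-\alpha|z|^{2\ell}}d\nu(z)=:S_b(f,g).
\]
The duality $(F\pla)^{*}\cong F^{p',\ell}_\alpha$ via $\langle\cdot,\cdot\rangle\la$ makes the boundedness of $\hbla\colon F\pla\to\overline{F\pla}$ equivalent to that of $S_b\colon F\pla\times F^{p',\ell}_\alpha\to\C$, with comparable norms; the little Fock case is treated in the same way through $(\mathfrak{f}^{\infty,\ell}_\alpha)^{*}\cong F^{1,\ell}_\alpha$, with the additional fact that, once $\hbla$ is bounded, $\hbla(f)\in\overline{\mathfrak{f}^{\infty,\ell}_\alpha}$ for $f\in\mathfrak{f}^{\infty,\ell}_\alpha$ by dominated convergence applied to the integral representation of $\hbla$.

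\textbf{Necessity.} Assume $\hbla$ is bounded. For $w\in\C$, pick $f_w\in F\pla$ and $g_w\in F^{p',\ell}_\alpha$ by normalising $K^\ell_{\alpha/2}(\cdot,w)$ in each space. The explicit Bergman kernel factorization derived earlier in the paper tells us that $K\la(z,w)\asymp K^\ell_{\alpha/2}(z,w)^{2}$ up to polynomial corrections of size $(1+|z|)^{2\ell-2}\simeq\Delta|z|^{2\ell}$, so $\overline{f_w(z)g_w(z)}e^{-\alpha|z|^{2\ell}}$ behaves, modulo the same corrections, as an approximate reproducing kernel for $F\dla$ at $w$ on the natural scale $(1+|w|)^{1-\ell}$. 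Combining the $L^{2}$-reproducing formula with the size comparison $K\la(w,w)^{1/2}\asymp e^{\alpha|w|^{2\ell}/2}(1+|w|)^{\ell-1}$ yields
\[
|S_b(f_w,g_w)|\gtrsim|b(w)|e^{-\alpha|w|^{2\ell}/4}.
\]
Since the left-hand side is bounded by $C\|\hbla\|$, we conclude $b\in F^{\infty,\ell}_{\alpha/2}$ with $\|b\|_{F^{\infty,\ell}_{\alpha/2}}\lesssim\|\hbla\|$.

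\textbf{Sufficiency and main obstacle.} Conversely, suppose $b\in F^{\infty,\ell}_{\alpha/2}$, so $|b(w)|\le\|b\|_{F^{\infty,\ell}_{\alpha/2}}e^{\alpha|w|^{2\ell}/4}$. Starting from
\[
\overline{\hbla(f)}(z)=\int_{\C}b(w)\overline{f(w)}K\la(z,w)e^{-\alpha|w|^{2\ell}}d\nu(w)
\]
and inserting the estimate $|K\la(z,w)|\lesssim|K^\ell_{\alpha/2}(z,w)|^{2}\rho(z,w)$ coming from the factorization, the task reduces to the $L^{p}$-boundedness of an integral operator on $\C$ whose kernel, after accounting for the Fock weights, is controlled by $|K^\ell_{\alpha/2}(z,w)|^{2}$; the gain $e^{\alpha|w|^{2\ell}/4}$ from $|b|$ is absorbed by the full factor $e^{-\alpha|w|^{2\ell}/2}$ in the Fock weight thanks to the squared-kernel structure, and the remaining inequality is a Schur-test computation with weight $h(z)=e^{c|z|^{2\ell}}$, $c$ tuned to the off-diagonal decay of $K^\ell_{\alpha/2}$. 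The decisive step, and the main obstacle, is the rigorous use of the factorization when $\ell\ge 2$: for $\ell=1$, $K^{1}_\alpha=c\,(K^{1}_{\alpha/2})^{2}$ is an exact identity and the Schur test is immediate, but for $\ell\ge 2$ the kernel $K\la$ carries genuine polynomial/oscillatory structure of size $(1+|z|)^{2\ell-2}$ so the factorization holds only modulo corrections of that order. Matching those corrections simultaneously in the Schur test (for sufficiency) and in the approximate-reproducing identity (for necessity) is the technical heart of the proof, and is exactly what the explicit computation of $K\la$ performed earlier in the paper is designed to enable.
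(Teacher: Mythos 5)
Your reduction to the bilinear form and the general architecture (test on kernel-type functions for necessity, control an integral operator for sufficiency) are in the right spirit, but both halves rest on a kernel comparison that is false and that the paper is careful to avoid. You assert $K\la(z,w)\asymp K^{\ell}_{\alpha/2}(z,w)^{2}$ up to polynomial corrections. For $\ell\ge 2$ the kernel $K^{\ell}_{\alpha/2}(z,w)$ is a Mittag--Leffler function of $z\overline w$: it has zeros (where its square vanishes but $K\la$ need not), and in the sector $|\arg(z\overline w)|>\pi/(2\ell)$ it is only $O(|z\overline w|^{-1})$, so its square decays like $|z\overline w|^{-2}$ while the known bound on $K\la$ there is $O(|z\overline w|^{-1})$. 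No two-sided pointwise comparison of this kind holds, so the Schur-test kernel bound in your sufficiency step and the ``approximate reproducing kernel'' in your necessity step are both unavailable. The paper's factorization is different and exact: $K^{\ell}_{\alpha}(w,z)=G_{\alpha,0}(w,z)G_{\alpha,1}(w,z)$ with $G_{\alpha,0}(w,z)=e^{\frac{\alpha}{2}(w\overline z)^{\ell}}$ (a zero-free elementary exponential, \emph{not} $K^{\ell}_{\alpha/2}$) and $G_{\alpha,1}=K^{\ell}_{\alpha}/G_{\alpha,0}$.

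This matters most in the necessity argument: a pointwise size comparison of kernels cannot produce the lower bound $|S_b(f_w,g_w)|\gtrsim |b(w)|e^{-\alpha|w|^{2\ell}/4}$, because the pairing with $b$ is an oscillatory integral. What you need is that the \emph{product} of the two test functions is exactly a multiple of $K^{\ell}_{\alpha}(\cdot,w)$, so that $\Lambda^{\ell}_{b,\alpha}\bigl(G_{\alpha,0}(\cdot,w),G_{\alpha,1}(\cdot,w)\bigr)=\overline{b(w)}$ by the reproducing formula; the real work is then the norm computation $\|G_{\alpha,0}(\cdot,z)\|_{F^{p,\ell}_{\alpha}}\simeq (1+|z|)^{2(1-\ell)/p}e^{\alpha|z|^{2\ell}/8}$ and $\|G_{\alpha,1}(\cdot,z)\|_{F^{p',\ell}_{\alpha}}\lesssim (1+|z|)^{2(\ell-1)/p}e^{\alpha|z|^{2\ell}/8}$, whose polynomial factors cancel exactly (a Stirling-type estimate of $\int_{\C}|e^{a(z\overline w)^{\ell}}|^{2}e^{-b|w|^{2\ell}}(1+|w|)^{c}\,d\nu(w)$). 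You flag this as ``the technical heart'' but do not carry it out. For sufficiency the paper bypasses kernels entirely: it proves that $P\la$ maps $L^{\infty}$ \emph{onto} $F^{\infty,\ell}_{\alpha/2}$ with norm control (a dilation argument), writes $b=P\la\varphi$ with $\varphi\in L^{\infty}$ and $\|\varphi\|_{L^{\infty}}\lesssim\|b\|_{F^{\infty,\ell}_{\alpha/2}}$, observes $\hbla=\mathfrak{h}^{\ell}_{\varphi,\alpha}$ on $E$, and concludes from the boundedness of $P\la$ on $L\pla$. I would recommend replacing your Schur-test sketch with that symbol-lifting argument and rebuilding the necessity around the exact factorization.
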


Here and throughout the paper $\|\hbla\|_{F\pla}$ denotes the norm of $\hbla$ as an operator from $F\pla$ to $\overline{F\pla}$.

%It is worth noticing that our proof of Theorem \ref{thm:main1} uses an adequate decomposition of the
%Bergman kernel together with precise pointwise and norm estimates of its terms. For $\ell = 1$, this
%decomposition is simply
%$e^{\alpha z\overline w}= e^{\alpha z\overline w/2}\cdot e^{\alpha z\overline w/2}$. For $\ell > 1$, the decomposition is more involved,
%as one could expect in view of \eqref{eq:kernelformula}.

Since the boundedness of  small Hankel operators is equivalent
to the boundedness of the corresponding Hankel forms,
 as an application of Theorem \ref{thm:main1} we obtain:

\begin{thm}\label{thm:main3}
Let $1< p<\infty$, $\ell\in\N$, $\alpha>0$ and $b\in H^{\infty,\ell}_\alpha$.
\begin{enumerate}
\item \label{item:main31} Let $\Lambda^\ell_{b,\alpha}$ be the Hankel bilinear form
defined by
$$
\Lambda^\ell_{b,\alpha}(f,g):=\langle fg,b\rangle\la
\qquad(f,g\in E).
$$
Then, $\Lambda^\ell_{b,\alpha}$ extends to a bounded bilinear form either on
$F^{p,\ell}_\alpha\times F^{p',\ell}_\alpha$ or on $F^{1,\ell}_\alpha\times \mathfrak{f}^{\infty,\ell}_\alpha$
if and only if $b\in F^{\infty,\ell}_{\alpha/2}$.
    \item \label{item:main32}
   The space $F^{\infty,\ell}_{\alpha/2}$ coincides with $P\la(L^\infty)$ and also  with the dual of $F^{1,\ell}_{2\alpha}$ with respect to the pairing
    $\langle \cdot,\cdot\rangle\la$.
    \item \label{item:main33}
    $F^{p,\ell}_\alpha\odot F^{p',\ell}_\alpha
        =F^{1,\ell}_\alpha\odot \mathfrak{f}^{\infty,\ell}_\alpha=F^{1,\ell}_\alpha\odot F^{\infty,\ell}_\alpha= F^{1,\ell}_{2\alpha}$.
\end{enumerate}
\end{thm}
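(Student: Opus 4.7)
My plan is to deduce Theorem~\ref{thm:main3} from Theorem~\ref{thm:main1} by expressing everything through the Hankel bilinear form. For $f,g\in E$ I first establish the identity
\begin{equation*}
\Lambda^\ell_{b,\alpha}(f,g)=\langle fg,b\rangle\la=\langle g,b\bar f\rangle\la=\langle g,P\la(b\bar f)\rangle\la=\langle g,\overline{\hbla(f)}\rangle\la,
\end{equation*}
where the third equality uses the self-adjointness of $P\la$ together with $P\la g=g$. Combined with the standard dualities $(F\pla)^*=F^{p',\ell}_\alpha$ (for $1<p<\infty$) and $( \fpla)^*=F^{1,\ell}_\alpha$ realised by $\langle\cdot,\cdot\rangle\la$, this identity converts boundedness of $\hbla$ into boundedness of $\Lambda^\ell_{b,\alpha}$ with equivalent norms, so part~(\ref{item:main31}) is immediate from Theorem~\ref{thm:main1}.

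For part~(\ref{item:main32}) I would first obtain $P\la(L^\infty)\subset F^{\infty,\ell}_{\alpha/2}$ by testing~(\ref{item:main31}) on $g=P\la u$: self-adjointness gives $\Lambda^\ell_{g,\alpha}(f,h)=\langle fh,u\rangle\la$ on $E\times E$, H\"older yields $|\Lambda^\ell_{g,\alpha}(f,h)|\le\|u\|_\infty\|f\|_{F\pla}\|h\|_{F^{p',\ell}_\alpha}$, and~(\ref{item:main31}) delivers $g\in F^{\infty,\ell}_{\alpha/2}$. Next, I would identify $(F^{1,\ell}_{2\alpha})^*$ with $P\la(L^\infty)$: any $\Phi\in(F^{1,\ell}_{2\alpha})^*$ extends (preserving norm) via Hahn--Banach to $L^{1,\ell}_{2\alpha}=L^1(e^{-\alpha|z|^{2\ell}}d\nu)$, and the duality $(L^{1,\ell}_{2\alpha})^*\cong L^\infty$ under the same weighted pairing yields $h\in L^\infty$ with $\Phi(f)=\int f\bar h\,e^{-\alpha|z|^{2\ell}}d\nu$ and $\|h\|_\infty=\|\Phi\|$. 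Self-adjointness rewrites this as $\Phi=\langle\cdot,P\la h\rangle\la$ on $E$, hence on $F^{1,\ell}_{2\alpha}$ by density of $E$. The remaining inclusion $F^{\infty,\ell}_{\alpha/2}\subset P\la(L^\infty)$ I would defer, obtaining it once~(\ref{item:main33}) is in hand.

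For part~(\ref{item:main33}) I would consider $T\colon F\pla\otimes_\pi F^{p',\ell}_\alpha\to F^{1,\ell}_{2\alpha}$, $T(f\otimes g)=fg$: H\"older gives $\|T\|\le 1$, and $E\subset\mathrm{Image}(T)$ yields density. The adjoint sends $\Phi=\langle\cdot,P\la h\rangle\la$ to the Hankel form $\Lambda^\ell_{P\la h,\alpha}$, and part~(\ref{item:main31}) with Theorem~\ref{thm:main1} gives $\|T^*\Phi\|\simeq\|P\la h\|_{F^{\infty,\ell}_{\alpha/2}}$. Once this is established equivalent to $\|\Phi\|\simeq\inf\{\|h\|_\infty:P\la h\text{ represents }\Phi\}$, $T^*$ is bounded below, whence $T$ is a quotient map and $F\pla\odot F^{p',\ell}_\alpha=F^{1,\ell}_{2\alpha}$ with equivalent norms. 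The chain
\begin{equation*}
F\pla\odot F^{p',\ell}_\alpha\subset F^{1,\ell}_\alpha\odot \fpla\subset F^{1,\ell}_\alpha\odot F^{\infty,\ell}_\alpha\subset F^{1,\ell}_{2\alpha}
\end{equation*}
of continuous H\"older inclusions then forces the intermediate tensor products to equal $F^{1,\ell}_{2\alpha}$ as well, and the deferred inclusion $F^{\infty,\ell}_{\alpha/2}\subset P\la(L^\infty)$ closes the loop for~(\ref{item:main32}).

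The crux is the circular dependence between the last inclusion of~(\ref{item:main32}) and the boundedness-below of $T^*$: showing that every $b\in F^{\infty,\ell}_{\alpha/2}$ can be realised as $P\la h$ with $\|h\|_\infty\lesssim\|b\|_{F^{\infty,\ell}_{\alpha/2}}$ is the hardest technical step. I expect to break it by invoking the explicit factorisation of the reproducing kernel $K\la$ constructed earlier in the paper --- the same tool that drives Theorem~\ref{thm:main1} --- to carry out this representation directly, after which the abstract duality arguments above close out both remaining statements.
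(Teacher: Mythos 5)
Your reduction of all three parts to Theorem~\ref{thm:main1} via duality is essentially the paper's route (part~(\ref{item:main31}) is their Corollary on Hankel forms, and part~(\ref{item:main33}) is likewise obtained by identifying the dual of the weak product with $F^{\infty,\ell}_{\alpha/2}$ and comparing with the dual of $F^{1,\ell}_{2\alpha}$). But there is a genuine gap: the inclusion $F^{\infty,\ell}_{\alpha/2}\subset P\la(L^\infty)$ with the norm control $\inf\{\|\varphi\|_{L^\infty}:P\la\varphi=b\}\lesssim\|b\|_{F^{\infty,\ell}_{\alpha/2}}$ is never proved --- you explicitly defer it as ``the hardest technical step'' and only express the hope that the factorization $K\la=G_{\alpha,0}G_{\alpha,1}$ will deliver it. As you yourself note, without this fact part~(\ref{item:main32}) is only half proved and the lower bound for $T^*$ in part~(\ref{item:main33}) collapses: the naive estimate $|\langle f,b\rangle\la|\le\int|f||b|e^{-\alpha|z|^{2\ell}}\,d\nu\lesssim\|b\|_{F^{\infty,\ell}_{\alpha/2}}\int|f|e^{-3\alpha|z|^{2\ell}/4}\,d\nu$ goes the wrong way against $\|f\|_{F^{1,\ell}_{2\alpha}}$, so the representation $b=P\la\varphi$, $\varphi\in L^\infty$, really is indispensable. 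So the proposal, as written, does not close.

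Moreover, the tool you point to is not the right one. The paper obtains the surjectivity not from the kernel factorization (which serves the necessity direction of Theorem~\ref{thm:main1}) but from an elementary dilation argument: writing $T_{\alpha}\varphi(z)=\varphi(z)e^{\alpha|z|^{2\ell}}$ and using the homogeneity $\lambda^{1/\ell}K\la(\lambda^{1/(2\ell)}z,\lambda^{1/(2\ell)}w)=K^{\ell}_{\lambda\alpha}(z,w)$, one gets $P\la\varphi=2^{-1/\ell}\,\Phi^{\ell}_{1/4}\bigl(P^{\ell}_{2\alpha}(T_{\alpha}\varphi)\bigr)$, a composition of an isometry, a bounded \emph{projection onto} $F^{\infty,\ell}_{2\alpha}$, and an isomorphism. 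Surjectivity then comes for free: given $b\in F^{\infty,\ell}_{\alpha/2}$, the explicit choice $\varphi(z)=2^{1/\ell}\,b(4^{1/(2\ell)}z)\,e^{-\alpha|z|^{2\ell}}$ satisfies $\varphi\in L^{\infty}$, $\|\varphi\|_{L^\infty}=2^{1/\ell}\|b\|_{F^{\infty,\ell}_{\alpha/2}}$ and $P\la\varphi=b$, because $T_\alpha\varphi=2^{1/\ell}\Phi^{\ell}_{4}b$ is already in $F^{\infty,\ell}_{2\alpha}$ and is therefore reproduced by $P^{\ell}_{2\alpha}$. Supplying this (one-paragraph) argument would complete your proof; the remaining ingredients --- your use of part~(\ref{item:main31}) to get $P\la(L^\infty)\subset F^{\infty,\ell}_{\alpha/2}$, and Hahn--Banach plus $L^1$--$L^\infty$ duality to identify $(F^{1,\ell}_{2\alpha})^*$ with $P\la(L^\infty)$ --- are sound, modulo the minor point that the identity $\langle fg,b\rangle\la=\langle g,\overline{\hbla(f)}\rangle\la$ should be justified by a Fubini argument rather than by Hilbert-space self-adjointness, since $b\overline{f}\notin L\dla$ for general $b\in H^{\infty,\ell}_{\alpha}$.
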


Here and troughout the paper, $p'$ denotes the conjugate exponent of $p$.

We recall that the weak product $F^{p,\ell}_\alpha\odot
F^{p',\ell}_\alpha$ consists of all entire functions
$h=\sum_{j=1}^\infty f_j g_j$, $f_j\in F^{p,\ell}_\alpha$ and $g_j\in
F^{p',\ell}_\alpha$, such that
$$
\|h\|_{F^{p,\ell}_\alpha\odot F^{p',\ell}_\alpha}
:=\inf\left\{\sum_{j=1}^\infty \|f_j\|_{F^{p,\ell}_\alpha}\|g_j\|_{F^{p',\ell}_\alpha}: h=\sum_{j=1}^\infty f_jg_j\right\}<\infty.
$$

\begin{thm}\label{thm:main2}
    Let $\alpha>0$, $\ell\in \N$, $b\in H^{\infty,\ell}_\alpha$ and $1\le p<\infty$.
     Then $\hbla$  is compact from $F\pla$ to $\overline{F\pla}$  if and only if
    $b\in \mathfrak{f}^{\infty,\ell}_{\alpha/2}$.

    Similarly, $\hbla$  is compact from $\mathfrak{f}^{\infty,\ell}_{\alpha}$ to $\overline{\mathfrak{f}^{\infty,\ell}_{\alpha}}$  if and only if
    $b\in \mathfrak{f}^{\infty,\ell}_{\alpha/2}$.
\end{thm}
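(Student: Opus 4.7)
The plan is to derive Theorem 1.4 from Theorem 1.1 by the usual transfer from the $O$-condition to the little-$o$ condition.

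For sufficiency, suppose $b\in\mathfrak{f}^{\infty,\ell}_{\alpha/2}$. Since by definition this little space is the closure of the holomorphic polynomials in $F^{\infty,\ell}_{\alpha/2}$, I would choose polynomials $b_n$ with $\|b-b_n\|_{F^{\infty,\ell}_{\alpha/2}}\to 0$. A direct computation using the orthogonality of the monomials $z^k$ in $F\dla$ shows that $P\la(z^k\overline{z^j})$ is a scalar multiple of $z^{k-j}$ when $j\le k$ and vanishes otherwise; consequently, for any polynomial $b_n$ the range of $\mathfrak{h}^{\ell}_{b_n,\alpha}$ lies inside the finite-dimensional space of polynomials of degree at most $\deg b_n$, so $\mathfrak{h}^{\ell}_{b_n,\alpha}$ is compact. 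The operator-norm bound in Theorem 1.1 then yields
\[
\|\hbla-\mathfrak{h}^{\ell}_{b_n,\alpha}\|=\|\mathfrak{h}^{\ell}_{b-b_n,\alpha}\|\lesssim \|b-b_n\|_{F^{\infty,\ell}_{\alpha/2}}\longrightarrow 0,
\]
so $\hbla$ is a uniform limit of compact operators on $F\pla$, hence compact. The same argument applies to $\mathfrak{f}^{\infty,\ell}_\alpha$.

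For necessity, I would test $\hbla$ on the family of normalized reproducing kernels $k_w:=K\la(\cdot,w)/\|K\la(\cdot,w)\|_{F\dla}^{2/p'}$, chosen so that $\|k_w\|_{F\pla}\simeq 1$ uniformly in $w\in\C$ (with obvious modifications for $p=1$ and for $\mathfrak{f}^{\infty,\ell}_\alpha$). Using the explicit form of $K\la$ computed earlier in the paper, one verifies that $k_w\to 0$ weakly in $F\pla$ (weak-$*$ for $\mathfrak{f}^{\infty,\ell}_\alpha$) as $|w|\to\infty$, so compactness of $\hbla$ forces $\|\hbla(k_w)\|\to 0$. On the other hand, invoking the reproducing property together with the kernel factorization that underlies Theorem 1.1, the integral $P\la(b\,\overline{k_w})$ can be evaluated essentially as a rescaled pointwise value of $b$ near $w$, producing a pointwise lower bound of the form
\[
|b(w)|\,e^{-\alpha|w|^{2\ell}/2}\lesssim \|\hbla(k_w)\|_{\overline{F\pla}}.
\]
Combining these yields $|b(w)|\,e^{-\alpha|w|^{2\ell}/2}\to 0$ as $|w|\to\infty$, which is precisely the condition $b\in\mathfrak{f}^{\infty,\ell}_{\alpha/2}$.

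The main obstacle is the reverse estimate in the necessity direction. In the classical case $\ell=1$ the translation identity $P^1_\alpha(b\,\overline{k_w})(z)=e^{-\alpha|w|^2/2}\,b(z+w)$ makes $b(w)$ appear by simply evaluating at $z=0$; for $\ell>1$ this translation invariance is lost, and one must use the asymptotic factorization of $K\la$ employed in the proof of Theorem 1.1 as a local substitute. Calibrating the normalization of $k_w$ so that this lower bound works uniformly for all $p\in[1,\infty)$ and also in the $\mathfrak{f}^{\infty,\ell}_\alpha$ setting is the technical heart of the argument.
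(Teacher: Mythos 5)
Your sufficiency argument coincides with the paper's: approximate $b$ by polynomials in $F^{\infty,\ell}_{\alpha/2}$, note that $\mathfrak{h}^{\ell}_{P_n,\alpha}$ has finite rank, and conclude from the norm estimate of Theorem \ref{thm:main1}; your explicit computation of $P\la(z^k\overline{z^j})$ is a detail the paper leaves implicit. The necessity argument, however, contains a genuine gap, and it sits exactly where you defer to ``the technical heart.'' Testing on the normalized reproducing kernel of $F\dla$ cannot produce the sharp condition. Already for $\ell=1$, where the translation identity you cite is available, evaluating $\mathfrak{h}^{1}_{b,\alpha}(k_w)$ at the origin gives only $|b(w)|e^{-\alpha|w|^{2}/2}\lesssim\|\mathfrak{h}^{1}_{b,\alpha}(k_w)\|$, and the vanishing of $|b(w)|e^{-\alpha|w|^{2\ell}/2}$ as $|w|\to\infty$ is membership in $\mathfrak{f}^{\infty,\ell}_{\alpha}$, \emph{not} in $\mathfrak{f}^{\infty,\ell}_{\alpha/2}$, whose defining weight is $e^{-\alpha|z|^{2\ell}/4}$. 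Your final display asserts precisely this weaker bound and then mislabels it as the desired conclusion; the factor of two in the exponent is the entire content of the theorem. The paper circumvents this by testing on the normalized \emph{half}-kernel $g_0(\cdot,z)=G_0(\cdot,z)/\|G_0(\cdot,z)\|_{F^{p,\ell}_{1}}$ with $G_0(w,z)=e^{(w\overline{z})^{\ell}/2}$, and recovering $b(z)$ through the identity \eqref{eqn:symbol:representation}, i.e.\ by pairing $\mathfrak{h}^{\ell}_{b,1}(G_0(\cdot,z))$ against the complementary factor $G_1(\cdot,z)$ of the factorization $K^{\ell}_1=G_0G_1$; the estimates \eqref{eqn:G0:estimate} and \eqref{eqn:G1:estimate} then yield $|b(z)|e^{-|z|^{2\ell}/4}\lesssim\|\mathfrak{h}^{\ell}_{b,1}(g_0(\cdot,z))\|_{L^{p,\ell}_{1}}$, which is the correct weight.

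Two further problems. First, the normalization $\|K\la(\cdot,w)\|_{F\dla}^{2/p'}$ does not make $\|k_w\|_{F\pla}\simeq1$ for $p\ne2$: up to polynomial factors $\|K\la(\cdot,w)\|_{F\pla}$ grows like $e^{\alpha|w|^{2\ell}/2}$ for every $p$, while your normalizer grows like $e^{\alpha|w|^{2\ell}/p'}$, so for $p<2$ the family is not even bounded. Second, ``obvious modifications for $p=1$'' conceals a real obstruction: the weak-null-sequence criterion (Lemma \ref{lem:weakly:convergence}) is only valid for $p>1$, since its proof needs $p'<\infty$ to control the tails of the dual functions. The paper handles $p=1$ separately, observing that $\mathfrak{h}^{\ell}_{b,1}$ on $F^{1,\ell}_{1}$ is the adjoint of $\mathfrak{h}^{\ell}_{b,1}$ on $\mathfrak{f}^{\infty,\ell}_{1}$ and invoking Schauder's theorem to reduce to the case already treated.
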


As far as we know, the techniques that have been
used to prove  characterizations of the  boundedness and the compactness of   small
Hankel operators
on the classical Fock spaces $F^p_\alpha=F^{p,1}_\alpha$ (see \cite{janson-peetre-rochberg,Zhu2012})
are strongly  based on the
fact that the Bergman reproducing kernel of $F^2_\alpha$ is given
by the neat expression
$
K^1_\alpha(z,w)=\frac{\alpha}{\pi}e^{\alpha{z\overline{w}}},
$
which permits to factorize the kernel as
 \begin{equation}\label{eqn:factorization1}
 K^1_\alpha(z,w)
=\frac{\pi}{\alpha}K^1_\alpha(z/2,w)K^1_\alpha(z/2,w).
\end{equation}
Thus, the proof  is quite easy  since
the integral operator with kernel  $K^1_\alpha(z/2,\cdot)$ maps the function $f$ in the Fock space to the function $f(\cdot/2)$.
However, the
general situation on $F^{2,\ell}_\alpha$, $\ell>1$,
is much more involved because of the lack  of such a simple expression for $K\la$.
In this general case we use the factorization
\begin{equation}\label{eqn:factorization2}
K^\ell_\alpha(w,z)   = G_{\alpha,0}(w,z)G_{\alpha,1}(w,z),
\end{equation}
where
\[
G_{\alpha,0}(w,z):= e^{\frac{\alpha}2(w\overline z)^\ell}\quad\text{and}\quad G_{\alpha,1}(w,z):= e^{-\frac{\alpha}2(w\overline z)^\ell}K^\ell_\alpha(w,z),
\]
which for $\ell=1$ is just \eqref{eqn:factorization1}.

Note that \eqref{eqn:factorization2}, which is given  in terms of analytic functions, is possible because $\ell$ is a positive integer. For other values of $\ell$ it is not clear how to choose a suitable decomposition.

Finally, we characterize the membership of $\hbla$  in the class $\mathcal{S}_2(F^{2,\ell}_{\alpha})$ of  Hilbert-Schmidt operators from
$F^{2,\ell}_{\alpha}$ to $\overline{F^{2,\ell}_{\alpha}}$.

For $\ell=1$, $\hbla\in \mathcal{S}_2(F^{2}_{\alpha})$ if and only if $b\in F^{2}_{\alpha/2}$ (see~{\cite{janson-peetre-rochberg}
    or~{\cite{Zhu2012}}}). For $\ell>1$ the characterization is given in terms of the space  $F^{2,\ell}_{\alpha,\Delta}$ of all  functions $f\in H(\C)$ such that
$$
\|f\|^2_{F^{2,\ell}_{\alpha,\Delta}}:=\int_{\C}|f(z) e^{-\frac{\alpha}{2}|z|^{2\ell}}|^2\, (1+|z|)^{2(\ell-1)}\,d\nu(z)<\infty.
$$

\begin{thm}\label{thm:main4}
Let $\alpha>0$, $\ell\in \N$  and $b\in H^{\infty,\ell}_\alpha$.
Then, $\hbla \in \mathcal{S}_2(F^{2,\ell}_{\alpha})$ if and only if $b\in F^{2,\ell}_{\alpha/2,\Delta}$.
 Moreover,
$$\|\mathfrak{h}^{\ell}_{b,\alpha}\|_{\mathcal{S}_2(F^{2,\ell}_{\alpha})}\simeq
\|b\|_{F^{2,\ell}_{\alpha/2,\Delta}}.$$
\end{thm}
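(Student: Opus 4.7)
The plan is to reduce both sides to Taylor-series scalars and compare the resulting sequences. Set $A_n:=\|z^n\|^2_{F\dla}$; polar integration gives $A_n=\frac{\pi}{\ell\,\alpha^{(n+1)/\ell}}\,\Gamma((n+1)/\ell)$, so the normalized monomials $e_n(z):=z^n/\sqrt{A_n}$ form an orthonormal basis of $F\dla$ and $\{\overline{e_m}\}$ an orthonormal basis of $\overline{F\dla}$. Therefore
\[
\|\hbla\|^2_{\mathcal{S}_2(F\dla)}=\sum_{n,m\ge 0}\bigl|\langle \hbla e_n,\overline{e_m}\rangle\la\bigr|^2.
\]
Unfolding the definition of $\hbla$ and using the reproducing property of $K^\ell_\alpha$ (equivalently, self-adjointness of $P\la$ with $P\la e_m=e_m$), I obtain $\langle \hbla e_n,\overline{e_m}\rangle\la=\langle e_n e_m,b\rangle\la$. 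Writing $b(z)=\sum_{k\ge 0}b_k z^k$ and using $\langle z^j,z^k\rangle\la=A_k\,\delta_{jk}$ gives $\langle e_n e_m,b\rangle\la=\overline{b_{n+m}}\,A_{n+m}/\sqrt{A_n A_m}$; grouping by $k=n+m$,
\[
\|\hbla\|^2_{\mathcal{S}_2(F\dla)}=\sum_{k\ge 0}|b_k|^2\,M_k,\qquad M_k:=A_k^2\sum_{n=0}^{k}\frac{1}{A_n A_{k-n}}.
\]

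A parallel Taylor-series computation for the target norm, based on rotation-invariance of $(1+|z|)^{2(\ell-1)}e^{-\alpha|z|^{2\ell}/2}\,d\nu$, gives $\|b\|^2_{F^{2,\ell}_{\alpha/2,\Delta}}=\sum_{k\ge 0}|b_k|^2\,N_k$ with
\[
N_k:=2\pi\int_0^\infty r^{2k+1}(1+r)^{2(\ell-1)}e^{-\alpha r^{2\ell}/2}\,dr.
\]
Hence Theorem \ref{thm:main4} reduces to the uniform two-sided estimate $M_k\simeq N_k$ for all $k\ge 0$. For $\ell=1$ this is immediate, since both sides compute to explicit scalar multiples of $k!\,(2/\alpha)^{k}$, recovering the classical result.

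The hard part will be establishing $M_k\simeq N_k$ for general $\ell>1$. On the $N_k$ side, expanding $(1+r)^{2(\ell-1)}$ binomially turns $N_k$ into a finite sum of explicit Gamma moments, and Stirling's formula yields $N_k\sim C\sqrt{k}\,(2k/(\alpha\ell e))^{k/\ell}$ as $k\to\infty$, with the dominant contribution coming from the $r^{2\ell-2}$ term. For $M_k$, the key structural fact is the log-convexity of $n\mapsto\log A_n$ (inherited from $\log\Gamma$): the summand $1/(A_n A_{k-n})$ is maximized at $n=k/2$, and the asymptotic $\psi'(x)\sim 1/x$ produces a discrete second derivative of order $2/(\ell k)$ at this peak. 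A Laplace / discrete-Gaussian summation then gives $\sum_{n=0}^{k}1/(A_n A_{k-n})\simeq\sqrt{k}/A_{k/2}^2$, and applying Stirling to $A_k^2/A_{k/2}^2$ yields the matching asymptotic $M_k\sim C'\sqrt{k}\,(2k/(\alpha\ell e))^{k/\ell}$. The finitely many remaining small-$k$ values are trivial since $M_k,N_k>0$. The delicate point is keeping the multiplicative constants uniform in $k$; I would address this by isolating the peak region $|n-k/2|\lesssim\sqrt{k\,\log k}$ and controlling the tails through the log-convexity estimates.
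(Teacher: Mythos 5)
Your reduction to Taylor coefficients is exactly the one in the paper: the identity $\|\hbla\|^2_{\mathcal{S}_2}=\sum_k|b_k|^2M_k$ with $M_k=A_k^2\sum_{n=0}^k(A_nA_{k-n})^{-1}$, together with your computation of $N_k$, reproduces Lemma~\ref{lem:coefficients}, and both arguments then hinge on the single scalar estimate $M_k\simeq N_k$, which after Stirling is the paper's
\[
\sum_{n=0}^{k}\frac{\Gamma\bigl(\tfrac{k+2-\ell}{\ell}\bigr)}{\Gamma\bigl(\tfrac{n+1}{\ell}\bigr)\Gamma\bigl(\tfrac{k-n+1}{\ell}\bigr)}\simeq 2^{k/\ell}.
\]
Where you genuinely diverge is in proving this. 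The paper (Lemma~\ref{lem:estimate:coefficients}) writes $k=n\ell+r$, splits the sum into five pieces according to residues mod $\ell$ and the ranges $j\le n/4$, $n/4<j<3n/4$, $j\ge 3n/4$, reduces each piece to honest binomial coefficients via monotonicity of $\Gamma$ and Lemma~\ref{lem:Stirling}, and kills the tails with Chernoff's inequality $\sum_{i\le n/4}\binom{n}{i}\le 2^ne^{-n/8}$. You instead run a discrete Laplace argument on the log-concave summand $1/(A_nA_{k-n})$: peak at $n=k/2$, curvature of order $1/(\ell k)$ from $\psi'$, hence $\sum_n 1/(A_nA_{k-n})\simeq\sqrt{k}/A_{k/2}^2$, and the duplication formula plus Stirling turn $\sqrt{k}\,A_k^2/A_{k/2}^2$ into $(2/\alpha)^{k/\ell}\Gamma(k/\ell+1)\simeq N_k$. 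This route is sound: the uniform bound $\psi'(x)>1/x$ gives a Gaussian upper envelope $g(n)\le g(k/2)-c(n-k/2)^2/k$ valid on all of $[0,k]$, which supplies the tail control you defer, while $\psi'(x)\le 2/x$ for $x\ge1$ gives the matching lower bound on the central range; your $\ell=1$ and asymptotic checks are consistent with the paper's answer. The trade-off is that your method is more conceptual and avoids Chernoff and the residue-class bookkeeping, at the cost of having to carry out the uniform-constant Laplace estimates that the paper's explicitly combinatorial decomposition sidesteps; neither approach is more general for this particular theorem.
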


Observe that, while the descriptions of the boundedness and compactness
of the small Hankel operators on $F\pla$ obtained in Theorems
\ref{thm:main1} and \ref{thm:main2} do not depend on the Laplacian
of $|z|^{2\ell}$, this is not the case for
Hilbert-Schmidt operators.
Taking into account our results, it seems natural to
conjecture analogous ones for weighted  Fock spaces induced by
weights $e^{-\phi}$, where $\phi$ is a subharmonic function such
that $\Delta\phi$ is a doubling measure.

The paper is organized as follows. In Section \ref{sect:prelim}  we state some useful properties of the Bergman projection, as well as  the main properties of the spaces $F\pla$ and of the small Hankel operator.
In Section \ref{sect:ProofThm1} we prove Theorem
\ref{thm:main1}.
In Sections \ref{sect:ProofThm3} and  \ref{sect:ProofThm2} we give the  proof of Theorems \ref{thm:main3} and \ref{thm:main2}, respectively.
Finally, in Section \ref{sect:ProofThm4} we provide a proof of Theorem \ref{thm:main4}, which follows  from the definition of the Hilbert-Schmidt norm.

\subsection{Notations}
Throughout the paper, $\N$ denotes the set of all positive integers.
We denote by $p'$ the conjugate exponent of $p$.
The letter $C$ will denote a positive constant, which may vary from place to
 place. The notation $A\lesssim B$ means that there exists a constant $C>0$, which does not depend on the involved variables,
 such that $A\le C\, B$. We write $A\simeq B$ when $A\lesssim B$ and $B\lesssim A$. We will also say that $\hbla$ is bounded (compact) on $F\pla$ if it is bounded (compact) from $F\pla$ to $\overline{F\pla}$. We denote the norm of this operator by $\|\hbla\|_{F\pla}$. The same notations will be used replacing  $F\pla$ by $\mathfrak{f}^{\infty,\ell}_\alpha$.

\section{Preliminaries}\label{sect:prelim}

\subsection{Properties of the  Fock spaces
$F^{p,\ell}_{\alpha}$}\quad\par

We begin the subsection recalling some useful embeddings of the generalized Fock spaces.

\begin{lem}\label{lem:propertiesF}
    Let $1\le p,q\le \infty$.  If $0<\alpha<\beta<\gamma<\delta$ then we have the  embeddings
    \[
    F^{\infty,\ell}_{\alpha}\hookrightarrow F^{p,\ell}_{\beta}\hookrightarrow  H^{\infty,\ell}_{\beta}\hookrightarrow \mathfrak{f}^{\infty,\ell}_{\gamma} \hookrightarrow F^{q,\ell}_{\delta}.
    \]
\end{lem}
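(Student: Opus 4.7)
My plan is to establish the four embeddings one by one, using just two ingredients: elementary integrability of Gaussians on $\C$ and the pointwise bound
$$
|f(z)|\lesssim \|f\|_{F^{p,\ell}_\alpha}\,(1+|z|)^{(2\ell-2)/p}\,e^{\alpha|z|^{2\ell}/2}
\qquad(f\in F^{p,\ell}_\alpha)
$$
from \cite[Lemma 19(a)]{marco-massaneda-ortega}, already recalled in the introduction.

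For $F^{\infty,\ell}_\alpha\hookrightarrow F^{p,\ell}_\beta$, I would plug the trivial bound $|f(z)|\le \|f\|_{F^{\infty,\ell}_\alpha}\,e^{\alpha|z|^{2\ell}/2}$ into the definition of the $F^{p,\ell}_\beta$-norm; the resulting integral $\int_\C e^{-p(\beta-\alpha)|z|^{2\ell}/2}\,d\nu(z)$ is finite because $\beta>\alpha$, and the case $p=\infty$ is even simpler. The symmetric embedding $\mathfrak{f}^{\infty,\ell}_\gamma\hookrightarrow F^{q,\ell}_\delta$ then follows from the obvious inclusion $\mathfrak{f}^{\infty,\ell}_\gamma\subset F^{\infty,\ell}_\gamma$ together with the same argument applied to the pair $(\gamma,\delta)$.

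For $F^{p,\ell}_\beta\hookrightarrow H^{\infty,\ell}_\beta$, I would apply the pointwise estimate above with $\alpha$ replaced by $\beta$ and use the monotonicity $(1+|z|)^{(2\ell-2)/p}\le (1+|z|)^{2\ell-2}$, valid for all $p\ge 1$ and $\ell\ge 1$, which is exactly the growth required by the definition of $H^{\infty,\ell}_\beta$. Finally, for $H^{\infty,\ell}_\beta\hookrightarrow \mathfrak{f}^{\infty,\ell}_\gamma$, if $f\in H^{\infty,\ell}_\beta$ then
$$
|f(z)|\,e^{-\gamma|z|^{2\ell}/2}\lesssim (1+|z|)^{2\ell-2}\,e^{-(\gamma-\beta)|z|^{2\ell}/2},
$$
and since $\gamma>\beta$ the right-hand side is bounded on $\C$ and tends to $0$ as $|z|\to\infty$; together with the fact that $f$ is entire, this places $f$ in $\mathfrak{f}^{\infty,\ell}_\gamma$. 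I do not anticipate any substantive obstacle: each step pairs at most polynomial growth against a strictly stronger Gaussian decay, or invokes the already-cited pointwise estimate; the only minor observation needed is the monotonicity $(2\ell-2)/p\le 2\ell-2$ for $p\ge 1$ and $\ell\in\N$, which is immediate.
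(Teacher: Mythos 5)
Your proposal is correct and follows essentially the same route as the paper: the middle embedding $F^{p,\ell}_{\beta}\hookrightarrow H^{\infty,\ell}_{\beta}$ rests on the pointwise estimate from \cite[Lemma 19(a)]{marco-massaneda-ortega}, and the remaining three embeddings are the elementary pointwise/integral comparisons that the paper dismisses with ``the rest follows directly.'' Your write-up simply makes those direct steps explicit, and each one is sound.
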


\begin{proof}
As we said in the introduction, the embedding
$F^{p,\ell}_{\beta}\hookrightarrow H^{\infty,\ell}_{\beta}$ is proved in
\cite[Lemma 19(a)]{marco-massaneda-ortega}.
The rest follows directly.
\end{proof}

Since our weights $\alpha|z|^{2\ell}/2$ are radial, the dilations
$z\mapsto\lambda z$, $\lambda>0$, act isometrically on our spaces
$L^{p,\ell}_{\alpha}$ and $F^{p,\ell}_{\alpha}$, as it is stated in the following
proposition.

\begin{prop}\label{prop:dilations:act:isometrically}
    Let $1\le p\le \infty$, $\alpha,\lambda>0$ and $\ell\in\N$. For any function $f$ on $\C$ we define
\begin{equation}\label{eqn:dilations:act:isometrically}
    \Phi^{\ell}_{\lambda}f(z):=f(\lambda^{1/(2\ell)}z)
    \qquad(z\in\C).
\end{equation}
Then $\Phi^{p,\ell}_{\lambda}:=
\lambda^{1/(p\ell)}\,\Phi^{\ell}_{\lambda}$ is
a linear isometry from $L\pla$ onto
$L^{p,\ell}_{\lambda\alpha}$ such that
$\Phi^{p,\ell}_{\lambda}(F\pla)=F^{p,\ell}_{\lambda\alpha}$ and
$\Phi^{p,\ell}_{\lambda}(\mathfrak{f}^{\infty,\ell}_{\alpha})=\mathfrak{f}^{\infty,\ell}_{\lambda\alpha}$.
In particular,
\[
\langle\Phi^{2,\ell}_{\lambda}f,\,
\Phi^{2,\ell}_{\lambda}g\rangle^{\ell}_{\lambda\alpha}=
\langle f,\,g\rangle^{\ell}_{\alpha}
\qquad(f,g\in L^{2,\ell}_{\alpha}).
\]
\end{prop}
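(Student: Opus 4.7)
The proof is a direct change-of-variables computation, so the plan has no genuine obstacle; the only care needed is to track the Jacobian, the rescaling of the weight, and the normalizing factor $\lambda^{1/(p\ell)}$ so that everything cancels.

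First I would check the isometry of $\Phi^{p,\ell}_{\lambda}\colon L\pla\to L^{p,\ell}_{\lambda\alpha}$ for $1\le p<\infty$. Writing out the definition,
\[
\|\Phi^{p,\ell}_{\lambda}f\|_{L^{p,\ell}_{\lambda\alpha}}^{p}
=\lambda^{1/\ell}\int_{\C}\bigl|f(\lambda^{1/(2\ell)}z)\bigr|^{p}\,e^{-p\lambda\alpha|z|^{2\ell}/2}\,d\nu(z),
\]
and the change of variable $w=\lambda^{1/(2\ell)}z$ (whose real Jacobian is $\lambda^{-1/\ell}$) converts $|z|^{2\ell}$ into $\lambda^{-1}|w|^{2\ell}$, so the $\lambda$'s disappear and the integral equals $\|f\|_{L\pla}^{p}$. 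For $p=\infty$ the normalizing factor is $1$ and the same change of variable in the essential supremum gives $\|\Phi^{\infty,\ell}_{\lambda}f\|_{L^{\infty,\ell}_{\lambda\alpha}}=\|f\|_{L^{\infty,\ell}_{\alpha}}$.

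Next I would argue surjectivity by producing an explicit inverse: applying the same construction with $\lambda$ replaced by $1/\lambda$ to the space $L^{p,\ell}_{\lambda\alpha}$ yields an isometry onto $L\pla$, and a direct check shows that $\Phi^{p,\ell}_{1/\lambda}\circ\Phi^{p,\ell}_{\lambda}=\operatorname{Id}$ and conversely. Linearity is trivial.

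Then I would transfer the statement to the holomorphic subspaces: $\Phi^{\ell}_{\lambda}$ obviously preserves entireness, so it sends $F\pla$ into $F^{p,\ell}_{\lambda\alpha}$, and by the same reasoning applied to $\Phi^{p,\ell}_{1/\lambda}$ we get equality $\Phi^{p,\ell}_{\lambda}(F\pla)=F^{p,\ell}_{\lambda\alpha}$. For the little Fock space, from
\[
|\Phi^{\infty,\ell}_{\lambda}f(z)|\,e^{-\lambda\alpha|z|^{2\ell}/2}
=\bigl|f(\lambda^{1/(2\ell)}z)\bigr|\,e^{-\alpha|\lambda^{1/(2\ell)}z|^{2\ell}/2}
\]
and the fact that $|\lambda^{1/(2\ell)}z|\to\infty$ iff $|z|\to\infty$, we see that the vanishing condition at infinity is preserved, giving $\Phi^{p,\ell}_{\lambda}(\mathfrak{f}^{\infty,\ell}_{\alpha})=\mathfrak{f}^{\infty,\ell}_{\lambda\alpha}$.

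Finally, the inner product identity is immediate either by polarization of the $p=2$ isometry or by the same change of variable applied to $\int_{\C} f(\lambda^{1/(2\ell)}z)\overline{g(\lambda^{1/(2\ell)}z)}\,e^{-\lambda\alpha|z|^{2\ell}}\,d\nu(z)$, with the factor $\lambda^{1/\ell}$ from the two copies of $\lambda^{1/(2\ell)}$ absorbing the Jacobian and the rescaled weight matching the original.
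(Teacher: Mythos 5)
Your proposal is correct and follows exactly the paper's approach: the authors' entire proof is the observation that the first assertion follows from the change of variable $w=\lambda^{1/(2\ell)}z$ and that the inner-product identity is the case $p=2$. You have simply written out the Jacobian bookkeeping, the inverse $\Phi^{p,\ell}_{1/\lambda}$, and the transfer to the holomorphic and little Fock subspaces, all of which check out.
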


\begin{proof}
The first assertion follows by making the change of variable
$w=\lambda^{1/(2\ell)}z$. The second assertion is a direct consequence of the
first one for $p=2$.
\end{proof}

\subsection{The Bergman kernel}\label{section:bergmankernel}\quad\par

It is well-known that
$F\dla$ with the inner product $\langle \cdot,\cdot\rangle\la$
is a Hilbert space such that the pointwise evaluation $f \mapsto f(z)$ is a bounded linear functional on $F\dla$,
for any $z\in\C$.
 Thus $F\dla$ is a reproducing kernel Hilbert space, that is, for any $z\in\C$
 there exists a unique function $K^{\ell}_{\alpha,z}$ in $F\dla$ such that
$f(z) = \langle f,\,K^{\ell}_{\alpha,z}\rangle\la$, for every $f\in F\dla$.
  The {\em Bergman kernel} for $F\dla$ is the function
\[
K\la(z,w):=K^{\ell}_{\alpha,w}(z)=\overline{K^{\ell}_{\alpha,z}(w)}\qquad(z,w\in\C).
\]

The following result is well known (see for instance
\cite{bommier-englis-youssfi}).

\begin{prop}\label{prop:kernel}
Let $\alpha>0$ and $\ell\in\N$.
Then the sequence of monomials $\{z^m\}_{m\ge0}$ is an
orthogonal basis of $F\dla$ and
\begin{equation*}
\|z^m\|_{F\dla}^2=\frac{\pi}{\ell\alpha^{(m+1)/\ell}}\Gamma\left(\frac{m+1}{\ell}\right).
\end{equation*}
Therefore, the sequence
 \[\{e_m\}_{m\ge0}:=
 \left\{\frac{z^m}{\|z^m\|_{F\dla}}\right\}=
 \left\{\sqrt{\frac{\ell}{\pi}\,\frac{\alpha^{(m+1)/\ell}}{\Gamma\left(\frac{m+1}{\ell}\right)}}\,z^m\right\}_{m\ge0}
 \]
is an orthonormal basis of $F\dla$ and
the Bergman kernel for $F\dla$ admits the representation
\begin{equation}\label{eqn:kernel}
K\la(z,w)
=\sum_{m=0}^\infty\frac{z^m\,\overline{w}^m}{\|w^m\|_{F\dla}\,\|z^m\|_{F\dla}}
=\frac{\ell\alpha^{1/\ell}} {\pi}\sum_{m=0}^\infty\frac{\alpha^{m/\ell} z^m\,\overline{w}^m}
{\Gamma\left(\frac{m+1}{\ell}\right)}.
\end{equation}

In particular,
\begin{equation}\label{eqn:kernel:change:parameter}
K^{\ell}_{\alpha}(z,w)=\alpha^{1/\ell}\,
K^{\ell}_1(\alpha^{1/(2\ell)}z,\,\alpha^{1/(2\ell)}w).
\end{equation}
 \end{prop}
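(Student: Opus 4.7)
The plan is a direct computation in polar coordinates, combined with the standard fact that the reproducing kernel of a separable Hilbert space of holomorphic functions with orthonormal basis $\{e_m\}$ is $K(z,w)=\sum_m e_m(z)\overline{e_m(w)}$ whenever pointwise evaluation is continuous.

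First I would verify orthogonality of monomials. Writing $z=re^{i\theta}$, the inner product is
\[
\langle z^m,z^n\rangle\la=\int_0^{2\pi}e^{i(m-n)\theta}\,d\theta\int_0^\infty r^{m+n+1}e^{-\alpha r^{2\ell}}\,dr,
\]
and the angular factor kills all terms with $m\neq n$. For $m=n$, the substitution $t=\alpha r^{2\ell}$ (so $r^{2m+1}\,dr=\frac{1}{2\ell\alpha}\bigl(t/\alpha\bigr)^{(m+1)/\ell-1}\,dt$) converts the radial integral into a Gamma function, yielding
\[
\|z^m\|_{F\dla}^2=\frac{\pi}{\ell\alpha^{(m+1)/\ell}}\,\Gamma\!\left(\tfrac{m+1}{\ell}\right),
\]
as claimed.

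Since the excerpt has already recalled that holomorphic polynomials are dense in $F\dla$, the orthogonal system $\{z^m\}$ is automatically an orthogonal basis, and the normalized sequence $\{e_m\}$ is an orthonormal basis. The series formula for $K\la(z,w)$ then follows from the general reproducing kernel identity $K(z,w)=\sum_m e_m(z)\overline{e_m(w)}$; the resulting series converges locally uniformly thanks to the decay of $1/\Gamma((m+1)/\ell)$ against any $|z|^m|w|^m$, so $K\la$ is indeed entire in $z$ and antientire in $w$.

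The scaling identity $K^\ell_\alpha(z,w)=\alpha^{1/\ell}K^\ell_1(\alpha^{1/(2\ell)}z,\alpha^{1/(2\ell)}w)$ can be read off the series by regrouping $\alpha^{m/\ell}z^m\overline w^m=(\alpha^{1/(2\ell)}z)^m\overline{(\alpha^{1/(2\ell)}w)^m}$, or obtained more structurally by applying the isometry $\Phi^{2,\ell}_{1/\alpha}$ of Proposition \ref{prop:dilations:act:isometrically} to the reproducing identity in $F^{2,\ell}_\alpha$ and transferring it to $F^{2,\ell}_1$.

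No step poses a real obstacle; this is a routine Gamma function computation. The only care needed is bookkeeping of the exponents in the change of variable $t=\alpha r^{2\ell}$ to land on $(m+1)/\ell$ correctly.
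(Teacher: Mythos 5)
Your computation is correct and is the standard argument; the paper itself gives no proof of this proposition, simply citing Bommier-Hato--Engli\v{s}--Youssfi, and your polar-coordinate evaluation of $\|z^m\|_{F\dla}^2$ via $t=\alpha r^{2\ell}$, together with the density of polynomials and the orthonormal-basis expansion of the reproducing kernel, is exactly the expected route. The bookkeeping in the change of variables and the derivation of the scaling identity \eqref{eqn:kernel:change:parameter} from the series both check out.
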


Formula \eqref{eqn:kernel}  shows that the Bergman kernel can be written in terms of the Mittag-Leffler functions. Namely,
\begin{equation}\label{eqn:KH}
K^{\ell}_{\alpha}(z,w) =H\la(z\overline w)=
\frac{\ell \alpha^{1/\ell}}{\pi} E_{\frac 1\ell,\frac 1\ell}(\alpha^{1/\ell}z\overline w) \quad(z,w\in\C),
\end{equation}
where
\[
E_{\frac 1\ell,\frac 1\ell}(\lambda)
=\sum_{k=0}^\infty
\frac{\lambda^k}{\Gamma\bigl(\frac{k+1}\ell\bigr)}
\quad(\lambda\in\C).
\]

It is known that the Mittag-Leffler function
$E_{\frac 1\ell,\frac 1\ell}(\lambda)$ satisfies the following asymptotic expansion as $|\lambda|\to\infty$ (see \cite[Chapter XVIII ]{Bateman-Erdelyi}):

\begin{equation}\label{eqn:asimpE}
E_{\frac 1\ell,\frac 1\ell}(\lambda)=
\begin{cases}
\ell\lambda^{\ell-1}e^{\lambda^\ell}+O(\lambda^{-1}),
&\text{if}\,\,\,|\arg(\lambda)|\le \frac{\pi}{2\ell},\\
O(\lambda^{-1}),
&\text{if}\,\,\,|\arg(\lambda)|> \frac{\pi}{2\ell}.
\end{cases}
\end{equation}
Here $\arg(\lambda)$ denotes the principal branch
of the argument of $\lambda$, that is,
 $-\pi<\arg(\lambda)\le\pi$.

It is clear that \eqref{eqn:asimpE} implies the
 following pointwise estimate of the Bergman kernel.

\begin{prop}\label{prop:pointwise}
		\[
	|K^\ell_\alpha(z,w)|\lesssim
	(1+|z\overline w|)^{\ell-1}
	\left(e^{\alpha\Re((z\overline w)^\ell)}+1\right) \quad(z,w\in\C).
	\]
\end{prop}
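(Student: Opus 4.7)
The plan is to read the bound directly off the Mittag--Leffler asymptotic \eqref{eqn:asimpE} via the identity \eqref{eqn:KH}. Setting $\lambda:=\alpha^{1/\ell}z\overline w$, we have
\[
K^\ell_\alpha(z,w)=\frac{\ell\alpha^{1/\ell}}{\pi}E_{\frac1\ell,\frac1\ell}(\lambda),
\]
and since $\alpha>0$ the argument satisfies $\arg\lambda=\arg(z\overline w)$ and $\lambda^\ell=\alpha(z\overline w)^\ell$. Thus the two cases of \eqref{eqn:asimpE}, stated in the variable $\lambda$, correspond verbatim to the geometric dichotomy $|\arg(z\overline w)|\le\pi/(2\ell)$ versus $|\arg(z\overline w)|>\pi/(2\ell)$, and $\Re(\lambda^\ell)=\alpha\,\Re((z\overline w)^\ell)$ already matches the exponential in the announced right-hand side.

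First I would fix a radius $R>0$ large enough for the constants implicit in \eqref{eqn:asimpE} to be uniform in $|\lambda|\ge R$. On the compact disc $\{|\lambda|\le R\}$ the entire function $E_{1/\ell,1/\ell}$ is bounded by some constant $M=M(R)$, while the claimed right-hand side satisfies $(1+|z\overline w|)^{\ell-1}\bigl(e^{\alpha\Re((z\overline w)^\ell)}+1\bigr)\ge1$, so the inequality is immediate there. For $|\lambda|>R$, in the sector $|\arg\lambda|\le\pi/(2\ell)$ the leading term of \eqref{eqn:asimpE} gives
\[
|E_{\frac1\ell,\frac1\ell}(\lambda)|\;\le\;\ell\,|\lambda|^{\ell-1}e^{\Re(\lambda^\ell)}+C|\lambda|^{-1}\;\lesssim\;|z\overline w|^{\ell-1}e^{\alpha\Re((z\overline w)^\ell)}+1,
\]
which is controlled by the right-hand side of the proposition. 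In the complementary sector $|\arg\lambda|>\pi/(2\ell)$, the expansion yields $|E_{1/\ell,1/\ell}(\lambda)|=O(|\lambda|^{-1})$, hence $|K^\ell_\alpha(z,w)|$ is bounded and is absorbed by the $+1$ summand in $e^{\alpha\Re((z\overline w)^\ell)}+1$.

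The main obstacle is really only bookkeeping: checking that the error terms $O(\lambda^{-1})$ from \eqref{eqn:asimpE} are uniform in each closed sector and are absorbed by the bounded summand, and verifying that the transitions between the compact regime $|\lambda|\le R$ and the asymptotic regime $|\lambda|>R$ fit together with a single implicit constant. Beyond that translation, no genuine analytic difficulty arises, since the proposition is essentially a restatement of \eqref{eqn:asimpE} in terms of the variables $(z,w)$.
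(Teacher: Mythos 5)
Your proof is correct and follows exactly the route the paper intends: the paper simply asserts that Proposition \ref{prop:pointwise} is an immediate consequence of the Mittag--Leffler asymptotics \eqref{eqn:asimpE} via \eqref{eqn:KH}, and your write-up supplies precisely the bookkeeping (substitution $\lambda=\alpha^{1/\ell}z\overline w$, the sector dichotomy, the compact disc, and absorption of the $O(\lambda^{-1})$ errors into the $+1$ term) that the phrase ``it is clear'' leaves implicit. No gaps.
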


Observe that \eqref{eqn:asimpE} also gives
pointwise estimates of $K^\ell_\alpha$ for $\ell$
not necessarily integer.
However, in this non-integer case to obtain a
 factorization of the Bergman kernel as in
 \eqref{eqn:factorization2}
seems more difficult.
Other estimates for more general radial weights are
 given in  \cite{SeiYouJGA2011}.

\subsection{The Bergman projection}
The orthogonal projection $P^\ell_{\alpha}$ from $L\dla$ onto
$F\dla$ admits the integral representation
\[
P^\ell_{\alpha}f(z):=
\int_{\C}K\la(z,w)\,f(w)\,e^{-\alpha|w|^{2\ell}}\,d\nu(w)
\quad(f\in L\dla,\,z\in\C).
\]
Note that if  $f\in L^{p,\ell}_\beta$, $1\le p<\infty$, $0<\beta<2\alpha$, then $P^\ell_{\alpha}f$ is well defined,
 that is, for any $z\in\C$, the function
\[
F_z(w)=K^\ell_{\alpha,z}(w)\,f(w)\,e^{-\alpha|w|^{2\ell}}
=K^\ell_{\alpha}(w,z)\,f(w)\,e^{-\alpha|w|^{2\ell}}
\]
is integrable on $\C$.
 Indeed, by Proposition \ref{prop:pointwise},
%  Proposition~{\ref{prop:kernel:estimate}
%\eqref{item::kernel:estimate1}},
  $|F_z(w)|\le C_z G_z(w) H_z(w)$, where
$G_z(w):=|f(w)|\,e^{-\beta|w|^{2\ell}/2}$ and
$H_z(w):=(1+|w|)^{\ell-1}e^{\alpha |z|^\ell|w|^\ell}e^{-(\alpha-\beta/2)|w|^{2\ell}/2}$.
Since $G_z\in L^p(\C)$ and $H_z\in L^{p'}(\C)$,
H\"{o}lder's inequality gives that
$$
\int_\C |F_z(w)|d\nu(w)\le C_z \|f\|_{L^{p,\ell}_\beta}.
$$
Hence $u(f)=\int_\C F_z\,d\nu$ is a bounded linear form on $L^{p,\ell}_\beta$.
Since $u(P)=P(z)$, for every holomorphic polynomial $P$, and the holomorphic
polynomials are dense on $F^{p,\ell}_\beta$, it turns out that
    \begin{equation}\label{eqn:reproducing:property}
    b(z)=\int_\C K^\ell_{\alpha} (z,w)\,b(w)\,e^{-\alpha|w|^\ell} d\nu(w),
    \end{equation}
    for any $b\in F^{p,\ell}_{\beta}$.
 In particular,  since by Lemma
    \ref{lem:propertiesF}, $H^{\infty,\ell}_{\alpha}\subset F^{p,\ell}_{\beta}$, for
    $\beta>\alpha$,  \eqref{eqn:reproducing:property} also holds for any
    $b\in H^{\infty,\ell}_{\alpha}$.

\begin{prop} \label{prop:Interp-dual}
For $\ell\ge 1$ and $\alpha>0$ we have:
\begin{enumerate}
  \item\label{item:Interp-dual1} If $1\le p\le\infty$, then $ P\la$ is a bounded projection from $
L\pla$ onto $F\pla$.
    \item\label{item:Interp-dual3} If $1\le p<\infty$, then $(F^{p,\ell}_\alpha)^*\equiv F^{p',\ell}_\alpha$,
    with respect to the pairing $ \langle\cdot,\cdot\rangle\la$.
     \item\label{item:Interp-dual4} $(\mathfrak{f}^{\infty,\ell}_{\alpha})^*\equiv F^{1,\ell}_\alpha$,
    with respect to the pairing $ \langle\cdot,\cdot\rangle\la$.
\end{enumerate}
\end{prop}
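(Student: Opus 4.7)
The plan is to establish part (i) via Schur's test together with the pointwise kernel bound of Proposition~\ref{prop:pointwise}, and then to deduce (ii) and (iii) by standard duality arguments that leverage the boundedness of $P^{\ell}_{\alpha}$.

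For part (i), the key step is to prove the two Schur-type inequalities
\[
\int_{\C}|K^{\ell}_{\alpha}(z,w)|\,e^{-\alpha|w|^{2\ell}/2}\,d\nu(w)\lesssim e^{\alpha|z|^{2\ell}/2}\qquad(z\in\C),
\]
together with its companion obtained by interchanging $z$ and $w$. Using Proposition~\ref{prop:dilations:act:isometrically} to reduce to $\alpha=1$ and then inserting the bound of Proposition~\ref{prop:pointwise}, the angular integral is handled by Laplace's method applied to $\theta\mapsto\cos(\ell(\phi-\theta))$, which attains its maximum $1$ at exactly $\ell$ equally spaced points in $[0,2\pi)$; each saddle point produces a factor of order $(|z|\,|w|)^{-\ell/2}$ that compensates the polynomial factor $(1+|z\overline{w}|)^{\ell-1}$ coming from the asymptotic \eqref{eqn:asimpE}. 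A subsequent Laplace estimate on the radial variable, whose critical radius is $r=|z|$, produces the claimed exponential rate $e^{\alpha|z|^{2\ell}/2}$. Once both Schur inequalities are in place, Schur's test gives boundedness of $P^{\ell}_{\alpha}:L^{p,\ell}_{\alpha}\to L^{p,\ell}_{\alpha}$ for every $1\le p\le\infty$, and the projection property onto $F^{p,\ell}_{\alpha}$ follows from the reproducing identity \eqref{eqn:reproducing:property} on polynomials combined with their density in $F^{p,\ell}_{\alpha}$ for $p<\infty$, together with a direct pointwise verification for $p=\infty$.

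For part (ii), the subspace $F^{p,\ell}_{\alpha}$ is closed in $L^{p,\ell}_{\alpha}$, so any bounded linear functional $\Lambda$ on $F^{p,\ell}_{\alpha}$ extends by Hahn--Banach to one on $L^{p,\ell}_{\alpha}$ and is therefore represented by some $g\in L^{p',\ell}_{\alpha}$ via $\Lambda(f)=\langle f,g\rangle^{\ell}_{\alpha}$. By (i), $h:=P^{\ell}_{\alpha}g\in F^{p',\ell}_{\alpha}$; the self-adjointness of $P^{\ell}_{\alpha}$ on $L^{2,\ell}_{\alpha}$ transfers to the pairing $\langle\cdot,\cdot\rangle^{\ell}_{\alpha}$ between $L^{p,\ell}_{\alpha}$ and $L^{p',\ell}_{\alpha}$ by density of polynomials and the boundedness in (i), whence $\Lambda(f)=\langle f,h\rangle^{\ell}_{\alpha}$ for every $f\in F^{p,\ell}_{\alpha}$. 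Injectivity of the correspondence $h\mapsto\langle\cdot,h\rangle^{\ell}_{\alpha}$ is a direct consequence of the reproducing formula, and the norm equivalence follows from the boundedness of $P^{\ell}_{\alpha}$. Part (iii) is treated analogously, with $\mathfrak{f}^{\infty,\ell}_{\alpha}$ realized isometrically as a closed subspace of $C_0(\C)$ through multiplication by $e^{-\alpha|z|^{2\ell}/2}$: a functional on $\mathfrak{f}^{\infty,\ell}_{\alpha}$ is then the restriction of a finite Borel measure on $\C$ by the Riesz representation theorem, which after absorbing the weight yields a representative in $L^{1,\ell}_{\alpha}$, and applying the bounded projection $P^{\ell}_{\alpha}$ places it in $F^{1,\ell}_{\alpha}$.

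I expect the Schur estimate in (i) to be the principal difficulty. The oscillatory factor $\cos(\ell(\phi-\theta))$ appearing in the exponent of Proposition~\ref{prop:pointwise} produces $\ell$ distinct saddle points on the unit circle whose contributions must be tracked through \eqref{eqn:asimpE} carefully so that the polynomial prefactor $(1+|z\overline{w}|)^{\ell-1}$ is absorbed correctly. Once this is done, the radial Laplace computation delivers the sharp rate $e^{\alpha|z|^{2\ell}/2}$ and the remaining duality arguments for (ii) and (iii) become routine consequences of Hahn--Banach and the boundedness of the projection.
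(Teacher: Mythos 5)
Your route is genuinely different from the paper's, and for the most part it is sound. For parts (i) and (ii) the paper gives no proof at all: it simply cites \cite[Theorem 13 and Corollary 14]{constantin-pelaez} and \cite[Theorems 3.1 and 3.6]{oliver-pascuas}. Your self-contained substitute is reasonable: the two Schur inequalities do hold, and your saddle-point bookkeeping checks out quantitatively (the angular integral contributes $(|z||w|)^{-\ell/2}$, which against the prefactor $(1+|z\overline{w}|)^{\ell-1}$ leaves $(|z||w|)^{\ell/2-1}$, and the radial Laplace step at $r=|z|$ then yields exactly $e^{\alpha|z|^{2\ell}/2}$); a cleaner way to the same estimate is to complete the square as in \eqref{eqn:completarquadrats}, writing $\Re((z\overline{w})^{\ell})-\tfrac12|w|^{2\ell}=-\tfrac12|w^{\ell}-z^{\ell}|^{2}+\tfrac12|z|^{2\ell}$, and substitute $u=w^{\ell}$. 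Your Hahn--Banach/projection argument for (ii) is the standard one and is fine. For part (iii) --- the only part the paper actually proves --- the two arguments diverge completely: the paper restricts $u$ to the Hilbert space $F^{2,\ell}_{\beta}$ with $\alpha/2<\beta<\alpha$ (which embeds into $\mathfrak{f}^{\infty,\ell}_{\alpha}$ by Lemma~\ref{lem:propertiesF}), takes the Riesz representative there, transports it by the dilation $\Phi^{2,\ell}_{\lambda^2}$ to a representative for the pairing $\langle\cdot,\cdot\rangle^{\ell}_{\alpha}$, and then bounds its $L^{1,\ell}_{\alpha}$-norm by testing against $C_c(\C)$ and using boundedness of $P^{\ell}_{\alpha}$ on $L^{\infty,\ell}_{\alpha}$. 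That detour is designed precisely to avoid handling measures; your $C_0(\C)$ route is more direct but runs into the issue below.

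The gap is in part (iii): the finite Borel measure $\mu$ produced by the Riesz representation theorem on $C_0(\C)$ need not be absolutely continuous with respect to Lebesgue measure (a Dirac mass already defeats the claim), so ``absorbing the weight'' does \emph{not} yield a representative in $L^{1,\ell}_{\alpha}$, and you cannot then apply $P^{\ell}_{\alpha}$ as an operator on $L^{1,\ell}_{\alpha}$. The repair is to project the measure itself: from $u(f)=\int_{\C}f(w)\,e^{-\alpha|w|^{2\ell}/2}\,d\mu(w)$ and the reproducing identity \eqref{eqn:reproducing:property}, Fubini's theorem gives $u(f)=\langle f,b\rangle^{\ell}_{\alpha}$ with $b(\zeta)=\int_{\C}K^{\ell}_{\alpha}(\zeta,w)\,e^{-\alpha|w|^{2\ell}/2}\,d\overline{\mu}(w)$, and the very Schur inequality you prove in part (i), namely $\int_{\C}|K^{\ell}_{\alpha}(\zeta,w)|\,e^{-\alpha|\zeta|^{2\ell}/2}\,d\nu(\zeta)\lesssim e^{\alpha|w|^{2\ell}/2}$, then yields $\|b\|_{F^{1,\ell}_{\alpha}}\lesssim\|\mu\|=\|u\|$. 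With that modification (and an explicit check that the Schur bounds justify the two applications of Fubini), your proof of (iii) closes.
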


\begin{proof}
    The proof of the first two assertions can be found, for instance,
    in~{\cite[Theorem 13 and Corollary 14]{constantin-pelaez}} and~{\cite[Theorems 3.1 and 3.6]{oliver-pascuas}}, so we only have to prove the last one.

    First note that if $b\in F^{1,\ell}_\alpha$ then
    $\langle\cdot,b\rangle\la\in  (\mathfrak{f}^{\infty,\ell}_\alpha)^*$
    and
    $\|\langle\cdot,b\rangle\la\|_{(\mathfrak{f}^{\infty,\ell}_\alpha)^*}
    \lesssim\|b
    \|_{F^{1,\ell}_\alpha}$.

Conversely, given $u\in(\mathfrak{f}^{\infty,\ell}_\alpha)^*$, we are going to
prove that there is  $b\in F^{1,\ell}_\alpha$ such that
$u=\langle\cdot,b\rangle\la$ and $\|b\|_{F^{1,\ell}_\alpha}\lesssim\|u\|_{(\mathfrak{f}^{\infty,\ell}_\alpha)^*}$.
Pick $\alpha/2<\beta<\alpha$. Then, by Lemma~{\ref{lem:propertiesF}}, we have the embedding
$F^{2,\ell}_{\beta}\hookrightarrow \mathfrak{f}^{\infty,\ell}_\alpha$ and so the
restriction of $u$ to $F^{2,\ell}_{\beta}$ is a bounded linear form on this space.
It follows that there is $g\in F^{2,\ell}_{\beta}$ such that
$u(f)=\langle f,g\rangle^\ell_\beta$, for every $f\in E$.
Now Proposition~{\ref{prop:dilations:act:isometrically}}
 for $\lambda=\alpha/\beta$ shows that
$b:=\Phi^{2,\ell}_{\lambda^2}g\in F^{2,\ell}_{\alpha^2/\beta}$ satisfies
\[
u(f)=\langle f,g\rangle^\ell_\beta=
\langle\Phi^{2,\ell}_{\lambda}f,
\,\Phi^{2,\ell}_{\lambda}g\rangle^{\ell}_{\alpha} \stackrel{(*)}{=}
\langle f,\,b\rangle^{\ell}_{\alpha},
\quad\mbox{ for every $f\in E$.}
\]
(Note that $(*)$  holds because both functions $f$ and $g$ are entire.)
Thus it only remains to prove that  $\|b\|_{L^{1,\ell}_{\alpha}}
\lesssim\|u\|_{(\mathfrak{f}^{\infty,\ell}_\alpha)^*}$.
 Recall that, by duality,
\[
\|b\|_{L^{1,\ell}_\alpha}=\sup_{\substack{f\in C_c(\C)\\\|f\|_{L^\infty}=1}}
\left|\int_\C f(z)e^{-\frac{\alpha}2|z|^{2\ell}}\,\overline{b(z)}\,d\nu(z)\right|
=\sup_{\substack{ f\in C_c(\C)\\\|f\|_{L^\infty}=1}}
|\langle T_{\alpha}f,b\rangle\la|,
\]
where $T_{\alpha}f(z):=f(z)e^{\frac{\alpha}2|z|^{2\ell}}$.
Note that $b=P^{\ell}_{\alpha}(b)$, because $b\in F^{2,\ell}_{\alpha^2/\beta}$ and
 $\alpha^2/\beta<2\alpha$. Therefore, for any $f\in C_c(\C)$, we have that
\[
\langle T_{\alpha} f,b\rangle\la=
\langle T_{\alpha} f,P_{\alpha}^{\ell}b\rangle\la\stackrel{(1)}{=}
\langle P_{\alpha}^{\ell}(T_ {\alpha} f),b\rangle\la\stackrel{(2)}{=}
u(P_{\alpha}^{\ell}(T_ {\alpha} f)),
\]
where $(1)$ follows from Fubini's theorem and $(2)$ holds since
$P_{\alpha}^{\ell}(T_ {\alpha} f)\in E$. And hence
\[
|\langle T_{\alpha} f,b\rangle\la|\le \|u\|_{(\mathfrak{f}^{\infty,\ell}_\alpha)^*}
\|P^{\ell}_{\alpha}\|_{L^{\infty,\ell}_{\alpha}}
\|T_ {\alpha} f\|_{L^{\infty,\ell}_{\alpha}}=
\|u\|_{(\mathfrak{f}^{\infty,\ell}_\alpha)^*}
\|P^{\ell}_{\alpha}\|_{L^{\infty,\ell}_{\alpha}}
\|f\|_{L^{\infty}},
\]
which gives that $\|b\|_{L^{1,\ell}_\alpha}\lesssim \|u\|_{(\mathfrak{f}^{\infty,\ell}_\alpha)^*}$.
\end{proof}

The last result of this subsection states that the dilation operators
$\Phi^{\ell}_{\lambda}$, defined by~{\eqref{eqn:dilations:act:isometrically}},
``conmute'' with the Bergman projections.

\begin{prop}\label{prop:dilations:conmute:Bergman:projection}
Let $1\le p\le\infty$, $\ell\in\N$ and $\alpha,\beta,\lambda>0$ such that $\beta<2\alpha$. Then
\begin{equation*}
\Phi^{\ell}_{\lambda}(P^{\ell}_{\alpha}f)=
P^{\ell}_{\lambda\alpha}(\Phi^{\ell}_{\lambda}f)
\qquad(f\in L^{p,\ell}_{\beta}).
\end{equation*}
\end{prop}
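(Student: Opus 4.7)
The plan is to apply the integral representation of the Bergman projection to both sides and relate them via a change of variables, using the scaling identity~\eqref{eqn:kernel:change:parameter} for the Bergman kernel. The hypothesis $\beta<2\alpha$ guarantees that $P^{\ell}_{\alpha}f$ is well defined (as shown in the discussion following Proposition~\ref{prop:Interp-dual}), and, since $\Phi^{\ell}_{\lambda}$ maps $L^{p,\ell}_{\beta}$ into $L^{p,\ell}_{\lambda\beta}$ with $\lambda\beta<2\lambda\alpha$, the right-hand side $P^{\ell}_{\lambda\alpha}(\Phi^{\ell}_{\lambda}f)$ is also well defined.

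First, I would unfold the left-hand side: for $z\in\C$,
\[
\Phi^{\ell}_{\lambda}(P^{\ell}_{\alpha}f)(z)=(P^{\ell}_{\alpha}f)(\lambda^{1/(2\ell)}z)=\int_{\C}K^{\ell}_{\alpha}(\lambda^{1/(2\ell)}z,w)\,f(w)\,e^{-\alpha|w|^{2\ell}}\,d\nu(w).
\]
Then I would perform the change of variable $w=\lambda^{1/(2\ell)}u$, which gives $d\nu(w)=\lambda^{1/\ell}\,d\nu(u)$ and $|w|^{2\ell}=\lambda|u|^{2\ell}$, transforming the integral into
\[
\lambda^{1/\ell}\int_{\C}K^{\ell}_{\alpha}(\lambda^{1/(2\ell)}z,\lambda^{1/(2\ell)}u)\,(\Phi^{\ell}_{\lambda}f)(u)\,e^{-\lambda\alpha|u|^{2\ell}}\,d\nu(u).
\]

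Next I would use~\eqref{eqn:kernel:change:parameter} twice. Applied to the pair $(\lambda^{1/(2\ell)}z,\lambda^{1/(2\ell)}u)$ with parameter $\alpha$, it gives
\[
\lambda^{1/\ell}\,K^{\ell}_{\alpha}(\lambda^{1/(2\ell)}z,\lambda^{1/(2\ell)}u)=(\lambda\alpha)^{1/\ell}K^{\ell}_{1}((\lambda\alpha)^{1/(2\ell)}z,(\lambda\alpha)^{1/(2\ell)}u),
\]
and the same identity applied with parameter $\lambda\alpha$ identifies the right-hand side with $K^{\ell}_{\lambda\alpha}(z,u)$. Substituting this into the integral yields exactly $P^{\ell}_{\lambda\alpha}(\Phi^{\ell}_{\lambda}f)(z)$, which completes the proof.

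There is no real obstacle here; the argument is just careful bookkeeping. The only thing to watch is that the change of variable is licit, which follows from the integrability estimate on $|F_{z}(w)|$ given before~\eqref{eqn:reproducing:property} (applied to both $f$ and $\Phi^{\ell}_{\lambda}f$ in their respective weighted spaces), so Fubini/substitution poses no issue.
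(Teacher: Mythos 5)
Your proof is correct and follows essentially the same route as the paper: unfold the integral representation of $P^{\ell}_{\alpha}f$, change variables $w=\lambda^{1/(2\ell)}u$, and invoke the kernel scaling identity $\lambda^{1/\ell}K^{\ell}_{\alpha}(\lambda^{1/(2\ell)}z,\lambda^{1/(2\ell)}u)=K^{\ell}_{\lambda\alpha}(z,u)$ (the paper derives it from \eqref{eqn:kernel}, you from \eqref{eqn:kernel:change:parameter}, which is an immaterial difference). Your extra remarks on well-definedness and the legitimacy of the substitution are fine but not needed beyond what the paper already established.
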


\begin{proof}
Let $f\in L^{p,\ell}_{\beta}$. Then
\[
\Phi^{\ell}_{\lambda}(P^{\ell}_{\alpha}f)(z)=
\int_{\C}K^{\ell}_{\alpha}(\lambda^{1/(2\ell)}z,w) f(w)
e^{-\alpha|w|^{2\ell}}d\nu(w).
\]
By making the change of variable $w=\lambda^{1/(2\ell)}v$ and taking into
account that
\[
\lambda^{1/\ell}K\la(\lambda^{1/(2\ell)}z,\lambda^{1/(2\ell)}v)=
K^{\ell}_{\lambda\alpha}(z,v),
\]
which follows from~{\eqref{eqn:kernel}},  we conclude  that  $\Phi^{\ell}_{\lambda}(P^{\ell}_{\alpha}f)(z)=P^{\ell}_{\lambda\alpha}(\Phi^{\ell}_{\lambda}f)(z)$.
\end{proof}

\subsection{The small Hankel operator on $F\pla$, $1\le p<\infty$}\quad\par

The next lemma gives some properties of the  subspace of entire functions $E$ defined in \eqref{eqn:E}.
\begin{lem}\label{lem:spaceE}
The space $E$ satisfies the following properties:
\begin{enumerate}
  \item $E\cdot E\subset E$.
    \item $E\subset F^{1,\ell}_\alpha$, for any $\alpha>0$
    \item $E$ contains the space of all the holomorphic polynomials.
 \item $E$ contains the space $Span\{ K^{\ell}_{\alpha,z}:z\in\C\}$, i.e. the set of
 finite linear combinations of functions
$K^{\ell}_{\alpha,z}$.
 \item $E$ is dense in $\mathfrak{f}^{\infty,\ell}_{\alpha}$ and in  $F\pla$, for any $1\le p<\infty$.
\end{enumerate}
\end{lem}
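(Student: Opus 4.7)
The plan is to verify each of the five properties directly from the definition
\[
E=\{f\in H(\C):\,|f(z)|=O(e^{\beta|z|^\ell})\text{ for some }\beta>0\},
\]
using only elementary estimates and facts already recorded in the paper (in particular the pointwise kernel bound in Proposition~\ref{prop:pointwise} and the density of holomorphic polynomials in $F\pla$ and $\fpla$ stated in the introduction). None of the individual steps is deep; the point of the lemma is to collect the properties in one place.

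For (1), if $f,g\in E$ with $|f(z)|\lesssim e^{\beta_1|z|^\ell}$ and $|g(z)|\lesssim e^{\beta_2|z|^\ell}$, then $fg$ is entire and satisfies $|f(z)g(z)|\lesssim e^{(\beta_1+\beta_2)|z|^\ell}$, so $fg\in E$. For (2), observe that $\beta|z|^\ell-\tfrac{\alpha}{2}|z|^{2\ell}\to-\infty$ as $|z|\to\infty$ for any $\alpha>0$, so the integral
\[
\int_{\C}|f(z)|\,e^{-\alpha|z|^{2\ell}/2}\,d\nu(z)
\lesssim\int_{\C}e^{\beta|z|^\ell-\alpha|z|^{2\ell}/2}\,d\nu(z)<\infty,
\]
giving $E\subset F^{1,\ell}_\alpha$. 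For (3), a polynomial $P$ of degree $n$ satisfies $|P(z)|\lesssim(1+|z|)^n=O(e^{|z|^\ell})$, hence $P\in E$.

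For (4), fix $z\in\C$. By Proposition~\ref{prop:pointwise},
\[
|K^\ell_{\alpha,z}(w)|=|K^\ell_\alpha(w,z)|\lesssim(1+|z||w|)^{\ell-1}\bigl(e^{\alpha\Re((w\overline z)^\ell)}+1\bigr)\lesssim C_z\,e^{\beta|w|^\ell}
\]
for any $\beta>\alpha|z|^\ell$, so $K^\ell_{\alpha,z}\in E$, and since $E$ is a linear subspace (clear from the definition), the finite linear span $\operatorname{Span}\{K^\ell_{\alpha,z}:z\in\C\}$ is contained in $E$. Finally, (5) is immediate from (3): the holomorphic polynomials form a subset of $E$ and are already known to be dense in $\fpla$ and in $F\pla$ for $1\le p<\infty$, so $E$ inherits this density property. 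The only step that requires any nontrivial input is (4), and even that reduces to a direct application of the already established pointwise estimate for $K^\ell_\alpha$, so no real obstacle arises.
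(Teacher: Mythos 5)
Your proof is correct and follows essentially the same route as the paper: items (1)--(3) from the definition of $E$ together with the integrability of $e^{\beta|w|^\ell-\gamma|w|^{2\ell}}$, item (4) from the pointwise kernel estimate of Proposition~\ref{prop:pointwise}, and item (5) from the density of the holomorphic polynomials in $F\pla$ and $\mathfrak{f}^{\infty,\ell}_{\alpha}$. The paper merely states these reductions more tersely; your write-up supplies the same elementary details.
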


\begin{proof}
The first three assertions are a consequence of the definition of
$E$ and the fact that $e^{\beta|w|^\ell-\gamma |w|^{2\ell}}\in
L^1$, for any  $\beta,\gamma>0$.

The fourth assertion is a consequence of Proposition \ref{prop:pointwise}.

The density of $E$ in $F\pla$ is a consequence of the fact that the holomorphic polynomials are dense in $F\pla$
(see \cite[Theorem~28]{ConsPelJGA2016}).
\end{proof}

In order to define the small Hankel operator for a large
class of symbols we consider the space
$X^{\infty,\ell}_\alpha$ of all measurable functions
$\varphi$ on $\C$ such that
\[
\|\varphi\|_{X^{\infty,\ell}_\alpha}:=\esssup_{z\in\C}|\varphi(z)|(1+|z|)^{2-2\ell}e^{-\frac{\alpha}2|z|^{2\ell}}
<\infty.
\]
Observe that $H^{\infty,\ell}_\alpha=H(\C)\cap X^{\infty,\ell}_\alpha.$

Let $\varphi$ be a function in $X^{\infty,\ell}_\alpha$.
Since $X^{\infty,\ell}_\alpha\subset L^{1,\ell}_{\beta}$,
for any $\beta>\alpha$,
the {\em small Hankel operator} $\hvla$ with symbol $\varphi$ is well defined
on $E$ by
\begin{equation}\label{eqn:def:small:Hankel}
\mathfrak{h}^\ell_{\varphi,\alpha}(f)(z):=\overline{P\la(\overline{f}
\varphi)(z)}=
\int_{\C}K\la(w,z)f(w)\,\overline{\varphi(w)}\,e^{-\alpha|w|^{2\ell}}\,d\nu(w).
\end{equation}

The next proposition states the relationship between the the small Hankel operator $\hvla$ and the
corresponding Hankel bilinear form defined by
\[
\Lambda^\ell_{\varphi,\alpha}(f,g):=
\langle fg,\varphi\rangle\la\qquad(f,g\in E).
\]

\begin{prop}\label{prop:hankelform}
If  $f, g\in E$ and $\varphi\in X^{\infty,\ell}_\alpha$, then we have
\begin{equation}\label{eqn:adjointh}
\Lambda^\ell_{\varphi,\alpha}(f,g)=\langle g,\overline{\hvla(f)}\rangle\la =\langle f,\overline{\hvla(g)}\rangle\la.
\end{equation}
Moreover, if $b=P\la(\varphi)\in H^{\infty,\ell}_\alpha$, then
$\hvla(f)=\mathfrak{h}^\ell_{b,\alpha}(f)$,
$\Lambda^\ell_{b,\alpha}(f,g)=
\Lambda^\ell_{\varphi,\alpha}(f,g)$ and
$\hvla(f)=\mathfrak{h}^\ell_{b,\alpha}(f)$.
\end{prop}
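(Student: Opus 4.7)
My plan is to establish \eqref{eqn:adjointh} directly from the integral definition of $\hvla$ by Fubini's theorem and the reproducing property of $K\la$, and then to deduce the moreover part by first showing that $\Lambda^\ell_{b,\alpha}$ and $\Lambda^\ell_{\varphi,\alpha}$ agree on $E\times E$, and finally converting this into the pointwise identity of the Hankel operators through the representation of $\hvla(f)(z)$ as a bilinear form evaluated at $(K^\ell_{\alpha,z},f)$.

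Concretely, I expand
\[
\langle g,\overline{\hvla(f)}\rangle\la
=\iint_{\C\times\C}g(z)f(w)\overline{\varphi(w)}K\la(w,z)\,e^{-\alpha(|w|^{2\ell}+|z|^{2\ell})}\,d\nu(w)\,d\nu(z).
\]
After swapping the order of integration by Fubini, the inner integral in $z$ equals $\langle g,K^\ell_{\alpha,w}\rangle\la=g(w)$ by the reproducing property, which applies since $g\in E\subset F^{2,\ell}_\alpha$ (Lemma~\ref{lem:spaceE}). This collapses the double integral to $\int fg\,\overline{\varphi}\,e^{-\alpha|w|^{2\ell}}\,d\nu=\Lambda^\ell_{\varphi,\alpha}(f,g)$, and the second identity in \eqref{eqn:adjointh} follows from the trivial symmetry $\Lambda^\ell_{\varphi,\alpha}(f,g)=\Lambda^\ell_{\varphi,\alpha}(g,f)$.

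For the moreover assertion, I write $\overline{b(w)}=\int K\la(v,w)\overline{\varphi(v)}e^{-\alpha|v|^{2\ell}}d\nu(v)$, substitute in $\langle fg,b\rangle\la$, and apply Fubini. Since $fg\in E\subset F^{2,\ell}_\alpha$, the inner integral $\int fg(w)K\la(v,w)e^{-\alpha|w|^{2\ell}}d\nu(w)=fg(v)$ by the reproducing property, yielding $\langle fg,b\rangle\la=\langle fg,\varphi\rangle\la$, that is, $\Lambda^\ell_{b,\alpha}(f,g)=\Lambda^\ell_{\varphi,\alpha}(f,g)$. To obtain $\hvla(f)=\mathfrak{h}^\ell_{b,\alpha}(f)$ pointwise, I observe that the integral definition of the Hankel operator rewrites as $\hvla(f)(z)=\Lambda^\ell_{\varphi,\alpha}(K^\ell_{\alpha,z},f)$, and analogously with $\varphi$ replaced by $b$; since $K^\ell_{\alpha,z}\in E$ by Lemma~\ref{lem:spaceE}(iv), the coincidence of the two bilinear forms at $(K^\ell_{\alpha,z},f)$ gives the identity for each $z\in\C$.

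The main technical obstacle is a uniform justification of Fubini's theorem in both applications. For this I combine the kernel bound $|K\la(w,z)|\lesssim(1+|w||z|)^{\ell-1}e^{\alpha|w|^\ell|z|^\ell}$ from Proposition~\ref{prop:pointwise}, the estimate $|\varphi(w)|\lesssim(1+|w|)^{2\ell-2}e^{\alpha|w|^{2\ell}/2}$ coming from $\varphi\in X^{\infty,\ell}_\alpha$, and the sub-Fock growth $|f(w)|,|g(z)|\lesssim e^{\beta(|w|^\ell+|z|^\ell)}$ for some $\beta>0$. Completing the square via the identity
\[
\alpha|w|^\ell|z|^\ell-\tfrac{\alpha}{2}|w|^{2\ell}-\alpha|z|^{2\ell}
=-\alpha\bigl(|z|^\ell-\tfrac{1}{2}|w|^\ell\bigr)^2-\tfrac{\alpha}{4}|w|^{2\ell}
\]
(and an analogous identity with the roles of $w$ and $z$ exchanged for the second application) exposes Gaussian decay in both variables, making the joint integrability on $\C\times\C$ a routine calculation.
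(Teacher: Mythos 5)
Your proposal is correct and follows essentially the same route as the paper: both identities are obtained from Fubini's theorem together with the reproducing property, with the joint integrability on $\C\times\C$ secured by the pointwise kernel estimate of Proposition~\ref{prop:pointwise} and the growth bounds on $\varphi$, $f$, $g$. The only cosmetic differences are that you complete the square in $|w|^{\ell},|z|^{\ell}$ where the paper uses Young's inequality with a parameter $1<\lambda<\sqrt{2}$, and that you make explicit the passage from equality of the bilinear forms to the pointwise identity $\hvla(f)=\mathfrak{h}^{\ell}_{b,\alpha}(f)$ by testing against $K^{\ell}_{\alpha,z}\in E$.
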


\begin{proof}
    Formula~{\eqref{eqn:adjointh}} follows from Fubini's theorem and the fact
that
\begin{equation*}
\Psi_{f,g,\varphi}(z,w):=K\la(w,z)f(w)\,\overline{\varphi}(w)\,
e^{-\alpha|w|^{2\ell}}g(z) \,e^{-\alpha|z|^{2\ell}}
\end{equation*}
is in $L^1(\C\times\C)$.

This is a consequence of
Proposition~{\ref{prop:pointwise}}. Indeed, if $\lambda>0$ we have that
\begin{align*}
|\Psi_{f,g,\varphi}(w,z)|
&\lesssim \|\varphi\|_{X^{\infty,\ell}_\alpha}
(1+|w|)^{3\ell-3}|f(w)|(1+|z|)^{\ell-1}|g(z)|
 e^{\alpha|z|^\ell|w|^\ell-\frac{\alpha}2|w|^{2\ell}-\alpha|z|^{2\ell}}\\
&\lesssim \|\varphi\|_{X^{\infty,\ell}_\alpha}
e^{\beta |w|^\ell} e^{\beta |z|^\ell}
e^{\frac{\alpha}2(\frac1{\lambda^2}-1)|w|^{2\ell}
    +\alpha(\frac{\lambda^2}2-1)|z|^{2\ell}}
\end{align*}
for some $\beta>0$.
Therefore by choosing $1<\lambda<\sqrt{2}$ we see that
 $\Psi_{f,g,\varphi}\in L^1(\C\times\C)$.

By Lemma~{\ref{lem:spaceE}}, if $f,\, g\in E$ then
$fg\in E\subset F^{1,\ell}_\alpha$, and so $fg=P\la(fg)$, by Proposition~{\ref{prop:Interp-dual}}. Therefore
\begin{align*}
\Lambda^\ell_{\varphi,\alpha}(f,g)
&=\int_\C P\la(fg)(w)\overline{\varphi(w)}e^{-\alpha|w|^{2\ell}} d\nu(w)\\
&=\int_\C \int_\C(fg)(z)K\la(w,z)e^{-\alpha|z|^{2\ell}}d\nu(z)
\overline{\varphi(w)}e^{-\alpha|w|^{2\ell}} d\nu(w).
\end{align*}
Since
$$
(fg)(z)K\la(w,z)e^{-\alpha|z|^{2\ell}}
\overline{\varphi(w)}e^{-\alpha|w|^{2\ell}}
=\Psi_{1,fg,\varphi}(w,z)\in L^1(\C\times\C),
$$
Fubini's theorem gives
$\Lambda^\ell_{b,\alpha}(f,g)=\Lambda^\ell_{\varphi,\alpha}(f,g)$
and $\hvla(f)=\mathfrak{h}^\ell_{b,\alpha}(f)$ for any $f,\,g\in E$.
\end{proof}

As a consequence of the above proposition and
Proposition
 \ref{prop:Interp-dual}\eqref{item:Interp-dual3}-\eqref{item:Interp-dual4} we obtain:

\begin{cor}\label{cor:hankelform} \quad\par
\begin{enumerate}
\item
If $1<p<\infty$, the Hankel operator $\hvla$ defined on the space $E$
extends to a bounded operator on $F\pla$, also denoted by $\hvla$,  if and only if  the
bilinear form $\Lambda^\ell_{\varphi,\alpha}$
defined on $E\times E$ extends to a bounded bilinear
form on $F\pla\times F^{p',\ell}_\alpha$.
Moreover, $\|\hvla\|_{F\pla}\simeq
\|\Lambda^\ell_{\varphi,\alpha}\|_{F\pla\times F^{q,\ell}_\alpha}$.

\item
The Hankel operator $\hvla$ defined on the space $E$
extends to a bounded operator, also denoted by $\hvla$, either on $F^{1,\ell}_\alpha$ or on $\mathfrak{f}^{\infty,\ell}_{\alpha}$ if and only if  the
bilinear form $\Lambda^\ell_{\varphi,\alpha}$
defined on $E\times E$ extends to a bounded bilinear
form on  $F^{1,\ell}_\alpha\times\mathfrak{f}^{\infty,\ell}_{\alpha}$.
Moreover, $\|\hvla\|_{F^{1,\ell}_\alpha}\simeq
 \|\Lambda^\ell_{\varphi,\alpha}\|_{F^{1,\ell}_\alpha\times\mathfrak{f}^{\infty,\ell}_{\alpha}}$.

 \item
 The adjoint (in the sense of \eqref{eqn:adjointh}) of $\hvla:F\pla\to\overline{F\pla}$, $1<p<\infty$, is $\hvla:F^{p',\ell}_\alpha\to\overline{F^{p',\ell}_\alpha}$ and
the adjoint of $\hvla:\mathfrak{f}^{\infty,\ell}_{\alpha}\to\overline{\mathfrak{f}^{\infty,\ell}_{\alpha}}$  is $\hvla:F^{1,\ell}_\alpha\to\overline{F^{1,\ell}_\alpha}$.
 \end{enumerate}
\end{cor}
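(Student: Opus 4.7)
The plan is to combine three ingredients that are already at hand: the identity $\Lambda^\ell_{\varphi,\alpha}(f,g)=\langle g,\overline{\hvla(f)}\rangle\la=\langle f,\overline{\hvla(g)}\rangle\la$ from Proposition \ref{prop:hankelform}; the duality pairings of Proposition \ref{prop:Interp-dual}, namely $(F\pla)^*\equiv F^{p',\ell}_\alpha$ and $(\mathfrak{f}^{\infty,\ell}_\alpha)^*\equiv F^{1,\ell}_\alpha$; and the density of $E$ in each of the spaces $F\pla$ $(p<\infty)$ and $\mathfrak{f}^{\infty,\ell}_\alpha$ from Lemma \ref{lem:spaceE}.

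For (i), I would argue both implications by inspection of \eqref{eqn:adjointh}. If $\hvla$ extends boundedly from $F\pla$ to $\overline{F\pla}$, then for $f,g\in E$ the identity and the duality $(F^{p',\ell}_\alpha)^*\equiv F\pla$ give
\[
|\Lambda^\ell_{\varphi,\alpha}(f,g)|=|\langle g,\overline{\hvla(f)}\rangle\la|\lesssim \|g\|_{F^{p',\ell}_\alpha}\|\hvla(f)\|_{\overline{F\pla}}\lesssim \|\hvla\|_{F\pla}\|f\|_{F\pla}\|g\|_{F^{p',\ell}_\alpha},
\]
and density extends $\Lambda$ to $F\pla\times F^{p',\ell}_\alpha$ with norm bounded by $\|\hvla\|_{F\pla}$. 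Conversely, if $\Lambda$ extends boundedly, then for each fixed $f\in E$ the form $g\mapsto\Lambda^\ell_{\varphi,\alpha}(f,g)$ is a bounded linear functional on $F^{p',\ell}_\alpha$, so by Proposition \ref{prop:Interp-dual}(ii) it is represented by a unique $b_f\in F\pla$ with $\|b_f\|_{F\pla}\lesssim\|\Lambda\|\|f\|_{F\pla}$; since \eqref{eqn:adjointh} shows it is also represented by $\overline{\hvla(f)}$, uniqueness forces $b_f=\overline{\hvla(f)}$ and hence the required estimate $\|\hvla(f)\|_{\overline{F\pla}}\lesssim\|\Lambda\|\|f\|_{F\pla}$ on $E$, which extends by density to $F\pla$.

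For (ii), I would run the same argument in both slots. The boundedness of $\Lambda$ on $F^{1,\ell}_\alpha\times\mathfrak{f}^{\infty,\ell}_\alpha$, combined with $(\mathfrak{f}^{\infty,\ell}_\alpha)^*\equiv F^{1,\ell}_\alpha$, gives for $g\in E$ a unique $b_g\in F^{1,\ell}_\alpha$ with $\langle f,b_g\rangle\la=\Lambda^\ell_{\varphi,\alpha}(g,f)$ on $\mathfrak{f}^{\infty,\ell}_\alpha$; uniqueness combined with \eqref{eqn:adjointh} identifies $b_g=\overline{\hvla(g)}$, yielding $\|\hvla(g)\|_{\overline{F^{1,\ell}_\alpha}}\lesssim\|\Lambda\|\|g\|_{F^{1,\ell}_\alpha}$ and, by density, the extension on $F^{1,\ell}_\alpha$. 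For the boundedness on $\mathfrak{f}^{\infty,\ell}_\alpha$ we use the symmetric slot: for $f\in E$, $g\mapsto\Lambda^\ell_{\varphi,\alpha}(g,f)$ is bounded on $F^{1,\ell}_\alpha$, whose dual is $F^{\infty,\ell}_\alpha$, giving $\overline{\hvla(f)}\in F^{\infty,\ell}_\alpha$ with the right norm bound. The main obstacle of the whole corollary lies here, in showing that this representative actually belongs to the \emph{little} Fock space $\mathfrak{f}^{\infty,\ell}_\alpha$ rather than merely to $F^{\infty,\ell}_\alpha$; I would settle this by checking directly from the integral formula \eqref{eqn:def:small:Hankel} and the pointwise estimate of Proposition \ref{prop:pointwise} that, for $f\in E$ and $\varphi\in X^{\infty,\ell}_\alpha$, the function $\hvla(f)$ satisfies $|\hvla(f)(z)|e^{-\alpha|z|^{2\ell}/2}\to 0$ as $|z|\to\infty$ (an integration by dominated convergence, splitting the integral into the region where $\Re((w\bar z)^\ell)$ is comparable to $|w|^\ell|z|^\ell$ and its complement), so that $\hvla(E)\subset\overline{\mathfrak{f}^{\infty,\ell}_\alpha}$ and the extension to $\mathfrak{f}^{\infty,\ell}_\alpha$ lands there. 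The reverse implications (operator bounded $\Rightarrow$ bilinear form bounded) are immediate from \eqref{eqn:adjointh} together with the bound $|\langle g,h\rangle\la|\le\|g\|_{F^{1,\ell}_\alpha}\|h\|_{F^{\infty,\ell}_\alpha}$.

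Finally, (iii) is automatic: with the pairing $\langle\cdot,\cdot\rangle\la$, identity \eqref{eqn:adjointh} is precisely the statement that, whenever the relevant extensions exist by (i) and (ii), the operator $\hvla$ on the dual space is the adjoint of $\hvla$ on the predual, since $\Lambda^\ell_{\varphi,\alpha}$ is symmetric in its two arguments and the representatives on both sides of \eqref{eqn:adjointh} are unique by the same duality argument used above.
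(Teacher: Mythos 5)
Your overall route -- the symmetry identity \eqref{eqn:adjointh} plus the dualities of Proposition \ref{prop:Interp-dual} plus density of $E$ -- is exactly the derivation the paper intends, and it works for part (i), for part (iii), and for the $F^{1,\ell}_\alpha$ half of part (ii). The problem is the step you yourself single out as the main obstacle. The claim that for every $f\in E$ and every $\varphi\in X^{\infty,\ell}_\alpha$ one has $|\hvla(f)(z)|e^{-\alpha|z|^{2\ell}/2}\to 0$ as $|z|\to\infty$ is false, so it cannot be "settled by checking directly from the integral formula." Indeed, take $\varphi=\psi\,e^{\alpha|\cdot|^{2\ell}/2}$ with $\psi\in L^\infty$; then $\|\varphi\|_{X^{\infty,\ell}_\alpha}\le\|\psi\|_{L^\infty}$, while a change of variables as in the proof of Proposition \ref{prop:PtaLinf} shows that
\[
\overline{\hvla(1)}=P\la(\varphi)=\bigl(P^{\ell}_{\alpha/2}\tilde\psi\bigr)(2^{1/(2\ell)}\,\cdot\,),\qquad \tilde\psi\in L^{\infty,\ell}_{\alpha/2},
\]
and since $P^{\ell}_{\alpha/2}$ maps $L^{\infty,\ell}_{\alpha/2}$ \emph{onto} $F^{\infty,\ell}_{\alpha/2}$, the function $\hvla(1)$ ranges over all of $\overline{F^{\infty,\ell}_{\alpha}}$ as $\psi$ varies; in particular it need not lie in $\overline{\mathfrak{f}^{\infty,\ell}_{\alpha}}$ and need not decay. (This is also visible in the estimates: for such $\varphi$ the factor $|\varphi(w)|e^{-\alpha|w|^{2\ell}}$ only gives $e^{-\alpha|w|^{2\ell}/2}$, and after splitting $\alpha|z|^{\ell}|w|^{\ell}\le\frac{\alpha\lambda^2}{2}|z|^{2\ell}+\frac{\alpha}{2\lambda^2}|w|^{2\ell}$ no admissible $\lambda$ leaves a convergent integral with a gain over $e^{\alpha|z|^{2\ell}/2}$; the computation in Proposition \ref{prop:sufcond}(ii) works only because there the symbol satisfies $|b(w)|\lesssim e^{\alpha|w|^{2\ell}/4}$.)

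Of course these $\varphi$ do not satisfy the hypothesis that $\Lambda^\ell_{\varphi,\alpha}$ is bounded on $F^{1,\ell}_\alpha\times\mathfrak{f}^{\infty,\ell}_\alpha$, so the Corollary itself is not contradicted; but your argument never uses that hypothesis at this point, and the duality $(F^{1,\ell}_\alpha)^*\equiv F^{\infty,\ell}_\alpha$ only places $\overline{\hvla(f)}$ in the big space $F^{\infty,\ell}_\alpha$. To land in $\overline{\mathfrak{f}^{\infty,\ell}_\alpha}$ you must first extract quantitative information from the boundedness: e.g.\ run the kernel factorization $K^{\ell}_\alpha(\cdot,z)=G_{\alpha,0}(\cdot,z)G_{\alpha,1}(\cdot,z)$ of \S\ref{subsec:proof:necessity:boundedness} to deduce $b:=P\la(\varphi)\in F^{\infty,\ell}_{\alpha/2}$, then write $b=P\la(\psi)$ with $\psi\in L^\infty$ (Proposition \ref{prop:PtaLinf}), note $\hvla=\mathfrak{h}^{\ell}_{\psi,\alpha}$ on $E$ by Proposition \ref{prop:hankelform}, and only then verify the decay as in Proposition \ref{prop:sufcond}(ii). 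This requires machinery from Section \ref{sect:ProofThm1} that is not available from Propositions \ref{prop:Interp-dual} and \ref{prop:hankelform} alone, so as written your proof of the $\mathfrak{f}^{\infty,\ell}_\alpha$ statement in (ii) has a genuine gap.
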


The last result of this subsection shows that the dilation
operators $\Phi^{\ell}_{\lambda}$, defined
by~{\eqref{eqn:dilations:act:isometrically}}, ``conmute''
with the small Hankel operators.

\begin{prop}\label{prop:dilations:small;Hankel:operator}
Let $1\le p<\infty$, $\alpha,\lambda>0$ and $\ell\in\N$.
Then:
\begin{enumerate}
    \item \label{item:dilations:small;Hankel:operator:1}
    $\Phi^{\ell}_{\lambda}(X^{\infty,\ell}_\alpha)=
    X^{\infty,\ell}_{\lambda\alpha}$ and
    $\Phi^{\ell}_{\lambda}(E)=E$.
    \item \label{item:dilations:small;Hankel:operator:2}
If $\varphi\in X^{\infty,\ell}_\alpha$ and
$\psi=\Phi^{\ell}_{\lambda}\varphi$ then
$\Phi^{\ell}_{\lambda}(\hvla f)=
\mathfrak{h}^\ell_{\psi,\lambda\alpha}
(\Phi^{\ell}_{\lambda}f)$, for every $f\in E$, and  so
$\|\hvla\|_{F\pla}=
\|\mathfrak{h}^\ell_{\psi,\lambda\alpha}
\|_{F^{p,\ell}_{\lambda\alpha}}$ and $\|\hvla\|_{\mathfrak{f}^{\infty,\ell}_{\alpha}}=
\|\mathfrak{h}^\ell_{\psi,\lambda\alpha}
\|_{\mathfrak{f}^{\infty,\ell}_{\alpha}}$.
\end{enumerate}
\end{prop}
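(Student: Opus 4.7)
My plan is to verify each claim by direct computation, leveraging the isometry Proposition \ref{prop:dilations:act:isometrically} and the commutation Proposition \ref{prop:dilations:conmute:Bergman:projection} that have already been established.

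For part (i), I would evaluate $\|\Phi^{\ell}_{\lambda}\varphi\|_{X^{\infty,\ell}_{\lambda\alpha}}$ by making the change of variable $w=\lambda^{1/(2\ell)}z$ in its defining essential supremum. The exponential factor transforms exactly: $e^{-\frac{\lambda\alpha}{2}|z|^{2\ell}}=e^{-\frac{\alpha}{2}|w|^{2\ell}}$. The polynomial weight is only comparable, not equal: since $1+|w|\simeq 1+|z|$ with constants depending on $\lambda$ and $\ell$, one gets $(1+|z|)^{2-2\ell}\simeq(1+|w|)^{2-2\ell}$. This is enough to conclude $\Phi^{\ell}_{\lambda}(X^{\infty,\ell}_\alpha)\subseteq X^{\infty,\ell}_{\lambda\alpha}$; combined with bijectivity (inverse $\Phi^{\ell}_{1/\lambda}$) it gives equality as sets. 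For $E$, if $|f(z)|=O(e^{\beta|z|^\ell})$ then $|\Phi^{\ell}_{\lambda}f(z)|=O(e^{\sqrt{\lambda}\beta|z|^\ell})$, so $\Phi^{\ell}_{\lambda}(E)\subseteq E$; inverting gives equality.

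For the operator identity in part (ii), I would start from $\hvla f=\overline{P^{\ell}_{\alpha}(\overline{f}\varphi)}$ and apply $\Phi^{\ell}_{\lambda}$. First one needs to legitimize the use of Proposition \ref{prop:dilations:conmute:Bergman:projection}: for $f\in E$ and $\varphi\in X^{\infty,\ell}_\alpha$, the growth estimates give $|\overline{f}\varphi(w)|\lesssim (1+|w|)^{2\ell-2}e^{\beta_0|w|^\ell}e^{\frac{\alpha}{2}|w|^{2\ell}}$, so $\overline{f}\varphi\in L^{p,\ell}_{\beta}$ for any $\beta\in(\alpha,2\alpha)$. Applying the said proposition yields
\[
\Phi^{\ell}_{\lambda}(\hvla f)=\overline{\Phi^{\ell}_{\lambda}\bigl(P^{\ell}_{\alpha}(\overline{f}\varphi)\bigr)}=\overline{P^{\ell}_{\lambda\alpha}\bigl(\Phi^{\ell}_{\lambda}(\overline{f}\varphi)\bigr)}.
\]
Since $\Phi^{\ell}_{\lambda}$ obviously commutes with pointwise multiplication and complex conjugation, $\Phi^{\ell}_{\lambda}(\overline{f}\varphi)=\overline{\Phi^{\ell}_{\lambda}f}\cdot\psi$, and the right-hand side equals $\mathfrak{h}^\ell_{\psi,\lambda\alpha}(\Phi^{\ell}_{\lambda}f)$ by the definition \eqref{eqn:def:small:Hankel}.

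The norm equalities follow from Proposition \ref{prop:dilations:act:isometrically}: $\Phi^{p,\ell}_{\lambda}=\lambda^{1/(p\ell)}\Phi^{\ell}_{\lambda}$ is an isometry $F\pla\to F^{p,\ell}_{\lambda\alpha}$, and for the $\mathfrak{f}^{\infty,\ell}$ case the normalizing factor is $1$, so $\Phi^{\ell}_{\lambda}$ itself is isometric between the little Fock spaces. Applying the operator identity from part (ii), the scalar $\lambda^{1/(p\ell)}$ enters identically in $\|\mathfrak{h}^\ell_{\psi,\lambda\alpha}(\Phi^{\ell}_{\lambda}f)\|_{F^{p,\ell}_{\lambda\alpha}}$ and in $\|\Phi^{\ell}_{\lambda}f\|_{F^{p,\ell}_{\lambda\alpha}}$, so it cancels in the quotient that defines the operator norm. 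Since $\Phi^{\ell}_{\lambda}$ restricts to a bijection of $E$ by part (i), and $E$ is dense in $F\pla$ (for $p<\infty$) and in $\mathfrak{f}^{\infty,\ell}_{\alpha}$ by Lemma \ref{lem:spaceE}, taking the supremum over $f\in E$ yields the desired norm equalities. I do not anticipate a serious obstacle; the only mildly delicate step is the growth verification required to apply Proposition \ref{prop:dilations:conmute:Bergman:projection}, and this is completely routine given the definitions of $E$ and $X^{\infty,\ell}_\alpha$.
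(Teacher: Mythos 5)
Your proof is correct and follows essentially the same route as the paper: part (i) by direct change of variable, and part (ii) by combining the isometry of Proposition \ref{prop:dilations:act:isometrically} with the commutation of dilations and Bergman projections from Proposition \ref{prop:dilations:conmute:Bergman:projection}, then passing to operator norms via density of $E$. You are in fact slightly more careful than the paper, which omits the verification that $\overline{f}\varphi\in L^{p,\ell}_{\beta}$ for some $\beta<2\alpha$ needed to invoke Proposition \ref{prop:dilations:conmute:Bergman:projection}.
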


\begin{proof}
The proof
of~{\eqref{item:dilations:small;Hankel:operator:1}}
is straightforward.
Part~{\eqref{item:dilations:small;Hankel:operator:2}}
follows from
Proposition~{\ref{prop:dilations:act:isometrically}} and
Proposition~{\ref{prop:dilations:conmute:Bergman:projection}}.
Indeed,
\begin{align*}
\Phi^{\ell}_{\lambda}(\hvla f)
&=\Phi^{\ell}_{\lambda}
(\overline{P^{\ell}_{\alpha}(f\overline{\varphi})})=
\overline{\Phi^{\ell}_{\lambda}
 (P^{\ell}_{\alpha}(f\overline{\varphi}))}\\
&\stackrel{(*)}{=}
\overline{P^{\ell}_{\lambda\alpha}(\Phi^{\ell}_{\lambda}
    (f\overline{\varphi}))}=
\overline{P^{\ell}_{\lambda\alpha}
    ((\Phi^{\ell}_{\lambda}f)\,\overline{\psi})}=
\mathfrak{h}^{\ell}_{\psi,\lambda\alpha}
(\Phi^{\ell}_{\lambda}f),
\end{align*}
for every $f\in E$, where $(*)$ holds by
Proposition~{\ref{prop:dilations:conmute:Bergman:projection}}. Then the above identity and
Proposition~{\ref{prop:dilations:act:isometrically}} directly imply that
$\|\hvla\|_{F\pla}=
\|\mathfrak{h}^\ell_{\psi,\lambda\alpha}
\|_{F^{p,\ell}_{\lambda\alpha}}$.
\end{proof}

\section{Proof of Theorem~{\ref{thm:main1}}}
\label{sect:ProofThm1}

\subsection{Proof of the sufficiency}

\begin{lem}\label{lem:sufcond}
If $\varphi\in L^\infty$ and $1\le p\le \infty$, then $\hvla$ is bounded on $F\pla$  and
 $\|\hvla\|\lesssim \|\varphi\|_{L^\infty}$.
\end{lem}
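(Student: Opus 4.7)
The plan is to realize $\hvla$ as the composition of three elementary bounded operations on $L\pla$: conjugation, pointwise multiplication by $\varphi$, and the Bergman projection. First I would note that every $\varphi\in L^\infty$ lies in $X^{\infty,\ell}_\alpha$, because $(1+|z|)^{2-2\ell}e^{-\alpha|z|^{2\ell}/2}$ is bounded on $\C$; hence $\hvla$ is well-defined on the dense subspace $E\subset F\pla$ by formula~\eqref{eqn:def:small:Hankel} and can be rewritten as $\hvla(f)=\overline{P\la(\overline{f}\varphi)}$.

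The key step would be to observe that the multiplication map $g\mapsto \varphi g$ sends $L\pla$ boundedly into itself with norm at most $\|\varphi\|_{L^\infty}$, because
$$\|\varphi g\|_{L\pla}=\bigl\|\varphi\, g\, e^{-\alpha|\cdot|^{2\ell}/2}\bigr\|_{L^p}\le \|\varphi\|_{L^\infty}\|g\|_{L\pla},$$
while conjugation $g\mapsto \overline{g}$ is an isometry of $L\pla$. Next, by Proposition~\ref{prop:Interp-dual}\eqref{item:Interp-dual1}, the Bergman projection $P\la:L\pla\to F\pla$ is bounded for every $1\le p\le\infty$. Composing these three operations (and noting that $g\mapsto\overline{g}$ is also an isometry from $F\pla$ onto $\overline{F\pla}$), I would conclude that for $f\in E$,
$$\|\hvla(f)\|_{\overline{F\pla}}=\|P\la(\overline{f}\varphi)\|_{F\pla}\le \|P\la\|_{L\pla\to F\pla}\,\|\varphi\|_{L^\infty}\,\|f\|_{F\pla}.$$

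To complete the proof for $1\le p<\infty$, I would extend this bounded linear operator from the dense subspace $E$ to all of $F\pla$ by density, using Lemma~\ref{lem:spaceE}. For the remaining case $p=\infty$, the operator $f\mapsto \overline{P\la(\overline{f}\varphi)}$ is defined directly on $F^{\infty,\ell}_\alpha$, since $\overline{f}\varphi\in L^{\infty,\ell}_\alpha$, and the same chain of inequalities yields the claim. There is essentially no obstacle here; the lemma is a straightforward corollary of the boundedness of $P\la$ on $L\pla$. Its role is merely to provide a first (overly large) class of admissible symbols, while the substance of the paper lies in sharpening this condition from $\varphi\in L^\infty$ to $\varphi\in F^{\infty,\ell}_{\alpha/2}$ on the holomorphic side.
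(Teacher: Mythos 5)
Your proposal is correct and follows essentially the same route as the paper: bound the multiplication $g\mapsto\varphi g$ on $L\pla$ by $\|\varphi\|_{L^\infty}$ and then invoke the boundedness of $P\la$ on $L\pla$ from Proposition~\ref{prop:Interp-dual}\eqref{item:Interp-dual1}. The extra remarks on density and the case $p=\infty$ are harmless elaborations of what the paper leaves implicit.
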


\begin{proof}
If $\varphi\in L^\infty$ and $f\in F\pla$, then $\varphi f\in L\pla$ and $\|\varphi f\|_{L\pla}\le\|\varphi\|_{\infty}\|f\|_{F\pla}$.
 By Proposition~{\ref{prop:Interp-dual}\eqref{item:Interp-dual1}} $P\la$ is bounded on $L\pla$, so we conclude that
\[
\|\hvla(f)\|_{L\pla}=\|P\la(\varphi\overline{f})\|_{F\pla}
\lesssim\|P\la\|_{L\pla}\,\|\varphi\|_{L^\infty}\|f\|_{F\pla}.\qedhere
\]
\end{proof}

The following result is a corollary of
Proposition~{\ref{prop:Interp-dual}\eqref{item:Interp-dual1}}.

\begin{prop}\label{prop:PtaLinf}
The projection $P\la$ is  bounded  from $L^\infty$ onto $F^{\infty,\ell}_{\alpha/2}$.
Moreover, $\inf\{\|\varphi\|_{L^\infty}: \varphi\in L^{\infty}, P\la \varphi=f\}\le
2^{1/\ell}\|f\|_{F^{\infty,\ell}_{\alpha/2}}$,
 for every $f\in F^{\infty,\ell}_{\alpha/2}$.
\end{prop}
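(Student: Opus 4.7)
My plan is to deduce both assertions from a single change of variables that reduces $P^\ell_\alpha$ acting on $L^\infty$ to $P^\ell_{\alpha/2}$ acting on $L^{\infty,\ell}_{\alpha/2}$, where the boundedness onto $F^{\infty,\ell}_{\alpha/2}$ is already supplied by Proposition~\ref{prop:Interp-dual}\eqref{item:Interp-dual1}. The key algebraic ingredient is the scaling identity
\[
K^\ell_\alpha(z,\, 2^{-1/\ell} u) \;=\; 2^{1/\ell}\, K^\ell_{\alpha/2}(z, u),
\]
which follows by matching coefficients of $z^m\,\bar u^m$ in the series representation~\eqref{eqn:kernel} (alternatively, it is a consequence of~\eqref{eqn:kernel:change:parameter} together with the fact that $K^\ell_1$ depends only on $z\bar w$).

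\emph{Quantitative surjectivity.} Given $f \in F^{\infty,\ell}_{\alpha/2}$ I will take the candidate symbol
\[
\varphi_f(w) \;:=\; 2^{1/\ell}\, f(2^{1/\ell} w)\, e^{-\alpha|w|^{2\ell}}.
\]
Setting $u = 2^{1/\ell} w$, so that $|u|^{2\ell} = 4|w|^{2\ell}$, the pointwise bound $|f(u)| \le \|f\|_{F^{\infty,\ell}_{\alpha/2}}\, e^{\alpha|u|^{2\ell}/4}$ makes the two exponentials cancel and yields $\|\varphi_f\|_{L^\infty} \le 2^{1/\ell}\, \|f\|_{F^{\infty,\ell}_{\alpha/2}}$. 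To compute $P^\ell_\alpha \varphi_f$ I substitute $w = 2^{-1/\ell} u$ in the defining integral, apply the kernel identity, and collect the scalar factors ($2^{1/\ell}$ from the definition of $\varphi_f$, $2^{-2/\ell}$ from the Jacobian, and $2^{1/\ell}$ from the kernel scaling), which combine to $1$, leaving
\[
P^\ell_\alpha \varphi_f(z) \;=\; \int_\C K^\ell_{\alpha/2}(z, u)\, f(u)\, e^{-\tfrac{\alpha}{2}|u|^{2\ell}}\, d\nu(u).
\]
By Lemma~\ref{lem:propertiesF} we have $f \in F^{\infty,\ell}_{\alpha/2} \hookrightarrow F^{p,\ell}_\beta$ for some $\beta \in (\alpha/2,\alpha)$, so the reproducing formula~\eqref{eqn:reproducing:property} with parameter $\alpha/2$ gives $P^\ell_\alpha \varphi_f = P^\ell_{\alpha/2} f = f$, which is exactly the inequality claimed in the ``moreover'' part.

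\emph{Boundedness.} For $\varphi \in L^\infty$ I will run the same change of variables in reverse: substituting $w = 2^{-1/\ell} u$ in $P^\ell_\alpha \varphi(z)$ and using the scaling identity yields
\[
P^\ell_\alpha \varphi(z) \;=\; 2^{-1/\ell}\, P^\ell_{\alpha/2}(\tilde\varphi)(z),
\qquad\text{where}\qquad
\tilde\varphi(u) \;:=\; \varphi(2^{-1/\ell} u)\, e^{\alpha|u|^{2\ell}/4}.
\]
The weight $e^{\alpha|u|^{2\ell}/4}$ is designed exactly so that $\|\tilde\varphi\|_{L^{\infty,\ell}_{\alpha/2}} = \|\varphi\|_{L^\infty}$, and Proposition~\ref{prop:Interp-dual}\eqref{item:Interp-dual1} applied with parameter $\alpha/2$ gives $\|P^\ell_{\alpha/2}\tilde\varphi\|_{F^{\infty,\ell}_{\alpha/2}} \lesssim \|\tilde\varphi\|_{L^{\infty,\ell}_{\alpha/2}}$, which completes the argument. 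Surjectivity then follows from the first part.

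The only genuine check is the kernel scaling identity, which is a short algebraic computation but is precisely the place where the integrality of $\ell$ enters (for non-integer $\ell$ the analogous manipulation would involve branches). The analytic manipulations (Fubini and the change of variable in each integral) are justified by the pointwise kernel bound in Proposition~\ref{prop:pointwise} together with either the uniform boundedness of $\varphi$ or the rapid decay of $\varphi_f$, so there is no hidden analytic difficulty.
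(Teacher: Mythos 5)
Your proof is correct and follows essentially the same route as the paper: both rest on the kernel scaling identity from \eqref{eqn:kernel} to rewrite $P\la$ on $L^\infty$ as a rescaled Bergman projection acting on its natural weighted $L^\infty$ space (you use $P^{\ell}_{\alpha/2}$ on $L^{\infty,\ell}_{\alpha/2}$, the paper uses $P^{\ell}_{2\alpha}$ on $L^{\infty,\ell}_{2\alpha}$ followed by a dilation), and then invoke Proposition~\ref{prop:Interp-dual}\eqref{item:Interp-dual1}. Your explicit preimage $\varphi_f(w)=2^{1/\ell}f(2^{1/\ell}w)e^{-\alpha|w|^{2\ell}}$ is exactly the one the paper's composition-of-surjections argument produces implicitly, so the quantitative bound $2^{1/\ell}\|f\|_{F^{\infty,\ell}_{\alpha/2}}$ comes out the same way.
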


\begin{proof}
It is clear that
$T_{\alpha}\varphi(z):=\varphi(z)e^{\alpha|z|^{2\ell}}$
defines a linear isometry from $L^{\infty}$ onto
$L^{\infty,\ell}_{2\alpha}$. Then,
for any $\varphi\in L^\infty$, we have that
\begin{align*}
P\la\varphi(z)
&=\int_\C K\la(z,w)\, T_{\alpha}\varphi(w)\, e^{-2\alpha|w|^{2\ell}} d\nu(w)\\
&\stackrel{(1)}{=}2^{-1/\ell}\int_\C K^{\ell}_{2\alpha}(2^{-1/\ell}z,w) \,
T_{\alpha}\varphi(w)\, e^{-2\alpha|w|^{2\ell}} d\nu(w) \\
&=2^{-1/\ell}\,P^{\ell}_{2\alpha}(T_{\alpha}\varphi)(2^{-1/\ell}z)
\stackrel{(2)}{=}
2^{-1/\ell}\,\Phi^{\ell}_{1/4}(P^{\ell}_{2\alpha}(T_{\alpha}\varphi))(z)
\end{align*}
where $(1)$ and $(2)$ follow from~{\eqref{eqn:kernel}}
and~{\eqref{eqn:dilations:act:isometrically}}, respectively.
In other words, the projection $P\la$ on $L^{\infty}$ is the
composition of the following three bounded linear exhaustive
 operators:
\begin{enumerate}
\item $T_{\alpha}:L^{\infty}\to L^{\infty,\ell}_{2\alpha}$;
\item $P^{\ell}_{2\alpha}:L^{\infty,\ell}_{2\alpha}\to F^{\infty,\ell}_{2\alpha}$;
\item $\Psi:=2^{-1/\ell}\,\Phi^{\ell}_{1/4}:F^{\infty,\ell}_{2\alpha}\to
F^{\infty,\ell}_{\alpha/2}$.
\end{enumerate}
It directly follows that $P^{\ell}_{\alpha}$ is bounded from
$L^{\infty}$ onto $F^{\infty,\ell}_{\alpha/2}$.
Moreover, since $P^{\ell}_{2\alpha}$ is a projection from
$L^{\infty,\ell}_{2\alpha}$ onto  $F^{\infty,\ell}_{2\alpha}$
(by Proposition~{\ref{prop:Interp-dual}\eqref{item:Interp-dual1}}) and the operator
$\Psi:=2^{-1/\ell}\,\Phi^{\ell}_{1/4}:F^{\infty,\ell}_{2\alpha}\to
F^{\infty,\ell}_{\alpha/2}$ is an
isomorphism such that $\Psi^{-1}=2^{1/\ell}\Phi^{\ell}_4$
satisfies
$\|\Psi^{-1}(f)\|_{F^{\infty,\ell}_{2\alpha}}=
2^{1/\ell}\|f\|_{F^{\infty,\ell}_{\alpha/2}}$, for every
$f\in F^{\infty,\ell}_{\alpha/2}$,
(by Proposition~{\ref{prop:dilations:act:isometrically}})
we conclude that
\[
\inf\{\|\varphi\|_{L^\infty}: \varphi\in L^{\infty}, P\la \varphi=f\}\le
2^{1/\ell}\|f\|_{F^{\infty,\ell}_{\alpha/2}},
\quad\mbox{for every $f\in F^{\infty,\ell}_{\alpha/2}$.}\qedhere
\]
\end{proof}

\begin{prop}\label{prop:sufcond}
Let $b\in F^{\infty,\ell}_{\alpha/2}$.
\begin{enumerate}
    \item \label{item:sufcond1} If $1\le p< \infty$, then
$\hbla$ extends to a bounded operator on $F\pla$.
\item \label{item:sufcond2}
$\hbla$ extends to a bounded operator on $\mathfrak{f}^{\infty,\ell}_\alpha$.
\end{enumerate}
Moreover,
$\|\hbla\|_{F^{p,\ell}_{\alpha}}\lesssim\|b\|_{F^{\infty,\ell}_{\alpha/2}}$, for any
$1\le p<\infty$, and $\|\hbla\|_{\mathfrak{f}^{\infty,\ell}_{\alpha}}\lesssim
\|b\|_{F^{\infty,\ell}_{\alpha/2}}$.
\end{prop}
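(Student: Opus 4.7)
The plan is to reduce to the bounded-symbol case handled by Lemma~\ref{lem:sufcond}, exploiting the fact that Proposition~\ref{prop:PtaLinf} identifies $F^{\infty,\ell}_{\alpha/2}$ as the image $P\la(L^\infty)$.

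Given $b\in F^{\infty,\ell}_{\alpha/2}$, I would first invoke Proposition~\ref{prop:PtaLinf} to select $\varphi\in L^\infty$ with $P\la\varphi=b$ and $\|\varphi\|_{L^\infty}\lesssim\|b\|_{F^{\infty,\ell}_{\alpha/2}}$. Since every bounded function lies in $X^{\infty,\ell}_\alpha$, the identification part of Proposition~\ref{prop:hankelform} applies and gives $\hvla(f)=\hbla(f)$ for every $f\in E$. Lemma~\ref{lem:sufcond} then produces the bound $\|\hvla\|_{F\pla}\lesssim\|\varphi\|_{L^\infty}\lesssim\|b\|_{F^{\infty,\ell}_{\alpha/2}}$, valid for every $1\le p\le\infty$.

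For part~\eqref{item:sufcond1}, the space $E$ is dense in $F\pla$ for $1\le p<\infty$ by Lemma~\ref{lem:spaceE}, so the restriction $\hbla|_E=\hvla|_E$ admits a unique bounded extension to $F\pla$, namely $\hvla$ itself, with the claimed norm estimate. For part~\eqref{item:sufcond2}, I cannot conclude directly from Lemma~\ref{lem:sufcond} alone, because taking $p=\infty$ there only guarantees that $\hvla$ sends $F^{\infty,\ell}_\alpha$ into its own conjugate, not that it sends $\mathfrak{f}^{\infty,\ell}_\alpha$ into $\overline{\mathfrak{f}^{\infty,\ell}_\alpha}$. I would instead appeal to Corollary~\ref{cor:hankelform}(2), which asserts that $\hvla$ extends boundedly to $\mathfrak{f}^{\infty,\ell}_\alpha$ if and only if it extends boundedly to $F^{1,\ell}_\alpha$, both being equivalent to the boundedness of the Hankel bilinear form $\Lambda^\ell_{\varphi,\alpha}$ on $F^{1,\ell}_\alpha\times\mathfrak{f}^{\infty,\ell}_\alpha$, with comparable norms. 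The $p=1$ case of part~\eqref{item:sufcond1} already proved thus delivers $\|\hbla\|_{\mathfrak{f}^{\infty,\ell}_\alpha}\lesssim\|b\|_{F^{\infty,\ell}_{\alpha/2}}$.

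The main obstacle, as just indicated, is the range condition in the $\mathfrak{f}^{\infty,\ell}_\alpha$ statement; routing through the predual $F^{1,\ell}_\alpha$ via Corollary~\ref{cor:hankelform}(2) avoids having to verify directly that $\hvla\bigl(\mathfrak{f}^{\infty,\ell}_\alpha\bigr)\subset\overline{\mathfrak{f}^{\infty,\ell}_\alpha}$, which is otherwise somewhat delicate because the preimage $\varphi$ of $b$ under $P\la$ need not belong to any smaller space than $L^\infty$.
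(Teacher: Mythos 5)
Your part~(i) is exactly the paper's argument: lift $b$ to $\varphi\in L^\infty$ with $P\la\varphi=b$ via Proposition~\ref{prop:PtaLinf}, identify $\hbla$ with $\hvla$ on $E$ via Proposition~\ref{prop:hankelform}, apply Lemma~\ref{lem:sufcond}, and extend by density of $E$ in $F\pla$. That part is correct.

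In part~(ii) there is a genuine gap. You invoke the direction of Corollary~\ref{cor:hankelform}(2) saying that boundedness of $\hvla$ on $F^{1,\ell}_\alpha$ (equivalently, of the bilinear form on $F^{1,\ell}_\alpha\times\mathfrak{f}^{\infty,\ell}_\alpha$) forces $\hvla$ to be bounded from $\mathfrak{f}^{\infty,\ell}_\alpha$ \emph{into} $\overline{\mathfrak{f}^{\infty,\ell}_\alpha}$, precisely so as to dodge the range condition. But that direction \emph{is} the range condition: what the duality $(F^{1,\ell}_\alpha)^*\equiv F^{\infty,\ell}_\alpha$ of Proposition~\ref{prop:Interp-dual} actually extracts from $\Lambda^{\ell}_{\varphi,\alpha}(f,g)=\langle f,\overline{\hvla(g)}\rangle\la$ is that $\hvla$ maps $\mathfrak{f}^{\infty,\ell}_\alpha$ boundedly into $\overline{F^{\infty,\ell}_\alpha}$, and no abstract argument upgrades this to the little space. (For a general bounded bilinear form on $\ell^1\times c_0$, e.g.\ $B(f,g)=\bigl(\sum_n f_n\bigr)\bigl(\sum_n g_n b_n\bigr)$ with $b\in\ell^1$, the induced operator on $c_0$ takes values in $\ell^\infty$ that are typically not in $c_0$.) So the only delicate step of~(ii) has been pushed onto a citation that cannot carry it without a Hankel-specific input, and that input is exactly what the paper supplies at this point: using the kernel bound of Proposition~\ref{prop:pointwise} and an elementary estimate of the cross term $e^{\alpha|z|^{\ell}|w|^{\ell}}$, one checks directly that $|\hbla(f)(z)|e^{-\alpha|z|^{2\ell}/2}\to0$ as $|z|\to\infty$ for every $f\in E$, i.e.\ $\hbla(E)\subset\overline{\mathfrak{f}^{\infty,\ell}_\alpha}$; combined with the $p=\infty$ case of the norm estimate from Lemma~\ref{lem:sufcond} and the density of $E$ in $\mathfrak{f}^{\infty,\ell}_\alpha$, this yields~(ii). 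You should add this pointwise verification rather than route through Corollary~\ref{cor:hankelform}(2).
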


\begin{proof}
In order to prove \eqref{item:sufcond1}, we show that  for $1\le p\le\infty$,
\begin{equation}\label{eq:suf2}
\|\hbla(f)\|_{L\pla}\lesssim
\|b\|_{F^{\infty,\ell}_{\alpha/2}}\|f\|_{F\pla}
\qquad(f\in E).
\end{equation}
By Proposition \ref{prop:PtaLinf}, $b=P\la(\varphi)$ for some
$\varphi\in L^\infty$ such that $\|\varphi\|_{L^\infty}\le3\|b\|_{F^{\infty,\ell}_{\alpha/2}}$.
If $f\in E$, Proposition
\ref{prop:hankelform} gives $\hbla(f)=\hvla(f)$,
 so Lemma~{\ref{lem:sufcond}} implies \eqref{eq:suf2}.

Taking into account \eqref{eq:suf2} for $p=\infty$, the proof of \eqref{item:sufcond2} will follow after checking $\hbla(E)\subset \mathfrak{f}^{\infty,\ell}_\alpha$. Indeed,
by Proposition \ref{prop:pointwise}, for $f\in E$ and $0<\lambda<1$, we have
\begin{align*}
    |\hbla(f)(z)|
    &\lesssim \int_\C (1+|z|)^{\ell-1}(1+|w|)^{\ell-1}e^{\alpha|z|^\ell|w|^\ell}
    e^{\beta|w|^\ell}   e^{\alpha|w|^{2\ell}/4}
    e^{-\alpha|w|^{2\ell}}d\nu(w)\\
    &   \lesssim (1+|z|)^{\ell-1}\int_\C e^{\alpha\lambda^2|z|^{2\ell}/2}
    e^{\alpha|w|^{2\ell}/(2\lambda^2)}
        e^{2\beta|w|^\ell}
        e^{-3\alpha|w|^{2\ell}/4}d\nu(w)\\
        &\lesssim (1+|z|)^{\ell-1}
     e^{\alpha\lambda^2 |z|^{2\ell}/2}  \int_\C
        e^{2\beta |w|^\ell} e^{-\alpha(3/2-1/\lambda^2)|w|^{2\ell}/2}
    d\nu(w).
\end{align*}
Choosing $\sqrt{2/3}<\lambda<1$, the last integral is finite and we get
\begin{equation*}
\lim_{|z|\to \infty}    |\hbla(f)(z)|e^{-\alpha|z|^{2\ell}/2}=0.\qedhere
\end{equation*}
\end{proof}

\subsection{Proof of the necessity}\quad\par
\label{subsec:proof:necessity:boundedness}

In order to prove the necessity we need some technical results.

The first one is a simple consequence of  Stirling's formula.

\begin{lem}\label{lem:Stirling}
	Let $\delta$  be a positive number. Then
	
	\begin{enumerate}
		\item\label{item:Stirling1}
		$\Gamma(s+t)\simeq s^t\,\Gamma(s)\qquad(s\ge 2\delta,\,|t|\le \delta).$
		\vspace*{3pt}
		\item\label{item:Stirling2} Let $a$ be a real number. Then
		\[
		\sum_{k=0}^{\infty}\frac{s^k}{k!}\frac1{(k+1)^a}\simeq \frac{e^s}{(1+s)^a}
		\qquad(s\ge0).
		\]
	\end{enumerate}

	All the constants in the above equivalences  only depend on $\delta$ and $a$.
\end{lem}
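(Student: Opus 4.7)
My plan is to reduce both claims to classical asymptotic information about $\Gamma$: Stirling's formula for (i), and concentration of the Poisson weights $s^k/k!$ around $k\approx s$ for (ii). The roles of $\delta$ and $a$ enter only as bounds on parameters for which uniform estimates must hold.

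For (i), I will combine Stirling with a compactness argument. Writing $\log\Gamma(x)=(x-\tfrac12)\log x - x + \tfrac12\log(2\pi) + O(1/x)$ and substituting $x=s$ and $x=s+t$, a short manipulation yields
$$\log\frac{\Gamma(s+t)}{s^t\,\Gamma(s)} = \bigl(s+t-\tfrac12\bigr)\log(1+t/s) - t + O(1/s).$$
Expanding $\log(1+t/s)=t/s + O(t^2/s^2)$ shows that the right-hand side tends to $0$ as $s\to\infty$, uniformly in $|t|\le\delta$. Consequently $\Gamma(s+t)/(s^t\Gamma(s))$ is pinched between two positive constants once $s\ge M(\delta)$ for some large $M(\delta)$. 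On the compact set $[2\delta,M(\delta)]\times[-\delta,\delta]$ the same ratio is continuous and strictly positive, so the corresponding bound there follows by continuity. This completes (i).

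For (ii), denote the sum by $S_a(s)$. When $s$ runs in a bounded interval $[0,s_0]$, both $S_a(s)$ and $e^s/(1+s)^a$ are continuous, strictly positive functions of $s$, so they are comparable, with constants depending only on $s_0$ and $a$. The substantive regime is $s\ge s_0$, with $s_0$ chosen large depending on $a$. Here the idea is that $\{e^{-s}s^k/k!\}_{k\ge0}$ is the law of a Poisson random variable of mean $s$, which concentrates on $|k-s|\lesssim\sqrt{s}$. Fixing some $\varepsilon\in(0,1)$, I split $S_a(s)$ according to $|k-s|\le\varepsilon s$ versus $|k-s|>\varepsilon s$. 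On the central block one has $(k+1)^a\simeq(1+s)^a$ with constants depending only on $\varepsilon$ and $a$, and Chernoff's inequality for the Poisson distribution gives $\sum_{|k-s|\le\varepsilon s}s^k/k!\simeq e^s$, yielding the main term $(1+s)^{-a}e^s$.

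The main obstacle is controlling the tail: when $a<0$ the factor $(k+1)^{-a}$ grows polynomially, so it cannot simply be discarded. I will use $(k+1)^{-a}\le 1+(k+1)^{|a|}$ together with Cauchy--Schwarz:
$$\sum_{|k-s|>\varepsilon s}\frac{s^k(k+1)^{|a|}}{k!} \le \left(\sum_{k=0}^{\infty}\frac{s^k(k+1)^{2|a|}}{k!}\right)^{\!1/2}\left(\sum_{|k-s|>\varepsilon s}\frac{s^k}{k!}\right)^{\!1/2}.$$
The first factor is the square root of a Poisson moment and grows like $O(s^{|a|}e^{s/2})$, while the second is controlled by Chernoff and is $O(e^{(1-c\varepsilon^2)s/2})$. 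Their product is $O(s^{|a|}e^{(1-c\varepsilon^2/2)s})$, which is $o\bigl((1+s)^{-a}e^s\bigr)$ as $s\to\infty$ since exponential decay dominates any polynomial factor. Hence the tail is negligible, the central block governs the asymptotics, and (ii) follows. Once the tail is tamed this way, the Chernoff bound absorbs the polynomial growth cleanly, and both estimates are established with constants depending only on $\delta$ and $a$.
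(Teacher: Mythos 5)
Your proof is correct, and while part (i) is essentially the paper's argument (both rest on Stirling's formula; your extra compactness step on $[2\delta,M(\delta)]\times[-\delta,\delta]$ is handled in the paper implicitly by using the form $\Gamma(x)\simeq x^{x-1/2}e^{-x}$ valid for all $x\ge\delta$), your treatment of part (ii) is genuinely different. The paper first reduces to $0\le a<1$ via the recursion $f_{a+1}(s)\simeq f_a(s)$ for $f_a(s)=\sum_k \frac{s^k}{k!}(s/(k+1))^a$, then splits the sum at the integer part $j$ of $s$ and uses only the elementary inequalities $1\le (s/(k+1))^a\le s/(k+1)$ for $k<j$ and the reversed ones for $k\ge j$, which telescopes into the explicit two-sided bound $e^s(1-2/e)\le f_a(s)\le 2e^s$ with no probabilistic input. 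You instead read the weights $e^{-s}s^k/k!$ as a Poisson law, isolate the central block $|k-s|\le\varepsilon s$ where $(k+1)^a\simeq(1+s)^a$, and kill the tail by Cauchy--Schwarz against a Poisson moment together with a Chernoff bound; this requires importing two standard facts (Poisson concentration and the moment bound $\sum_k\frac{s^k}{k!}(k+1)^{2|a|}=O((1+s)^{2|a|}e^s)$, the latter provable from integer moments plus Jensen --- be careful not to invoke part (ii) itself here, which would be circular). Your route is conceptually transparent and generalizes readily to other weight sequences with Gaussian-type concentration, at the cost of heavier machinery and implicit constants; the paper's argument is more elementary and self-contained, and yields explicit constants. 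Amusingly, the paper does use Chernoff's inequality, but only later, for the binomial sums in Lemma 6.2.
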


\begin{proof}\eqref{item:Stirling1}
	Stirling's formula gives
	\[
	\Gamma(x)\simeq x^{x-1/2}e^{-x}\qquad(x\ge \delta),
	\]
	so
	%, since $x+y\ge \delta$, for $x\ge 2\delta$ and $|y|\le 2$,
	%it turns out that
	\[
	\Gamma(s+t)\simeq (s+t)^{s+t-1/2}e^{-s-t}
	\simeq (s+t)^t (s+t)^{s-1/2} e^{-s}.
	\]
	Since $\frac{s}{2}\le s+t\le 2s$ and $|t|\le\eta$, we have
	$(s+t)^t\simeq s^t$ and
	$(s+t)^{s-1/2}\simeq s^{s-1/2}$.
	
	\noindent
	\eqref{item:Stirling2} Note that both terms
	of the estimate are positive continuous functions of
	$s\ge0$. So it is clear that we only have to prove that
	\begin{equation}\label{eqn:exponential:estimate}
	f_a(x):=\sum_{k=0}^{\infty}\frac{s^k}{k!}\frac{s^a}{(k+1)^a}
	\simeq e^s \qquad(s\ge1).
	\end{equation}
	But
	\[
	f_{a+1}(s)
	=\sum_{k=0}^{\infty}\frac{s^{k+1}}{(k+1)!}\frac{s^a}{(k+1)^a}
	=\sum_{k=1}^{\infty}\frac{s^k}{k!}\frac{s^a}{k^a}
	\simeq \sum_{k=1}^{\infty}\frac{s^k}{k!}\frac{s^a}{(k+1)^a}
	\simeq f_a(s),
	\]
	and so we may assume that $0\le a<1$.
	Let $s\ge1$ and let $j\in\N$ be its integer part.   Then
	\[
	f_a(s)=\sum_{k=0}^{j-1}\frac{s^k}{k!}\left(\frac{s}{k+1}\right)^a
	+\sum_{k=j}^{\infty}\frac{s^k}{k!}\left(\frac{s}{k+1}\right)^a.
	\]
	Now
	\[
	1\le\left(\frac{s}{k+1}\right)^a\le\frac{s}{k+1}
	\qquad(0\le k<j)
	\]
	and
	\[
	\frac{s}{k+1}\le\left(\frac{s}{k+1}\right)^a\le1
	\qquad(j\le k).
	\]
	It follows that
	\[
	\sum_{k=0}^{j-1}\frac{s^k}{k!}
	+\sum_{k=j}^{\infty}\frac{s^{k+1}}{(k+1)!}\le
	f_a(s)\le
	\sum_{k=0}^{j-1}\frac{s^{k+1}}{(k+1)!}
	+\sum_{k=j}^{\infty}\frac{s^k}{k!},
	\]
	and therefore
	\[
	e^s\left(1-\frac2{e}\right)\le e^s\left(1-\frac{(j+1)^j}{e^j\,j!}\right)\le e^s-\frac{s^j}{j!}\le
	f_a(s)\le 2e^s,
	\]
	since the sequence $c_j=\frac{(j+1)^j}{e^j\,j!}$ is
	decreasing. Hence \eqref{eqn:exponential:estimate} holds.
\end{proof}

The  following lemma is an essential tool to prove the necessity.

\begin{lem} \label{lem:estimate}
	For $\ell\in\N$, $a,b> 0$ and $c\ge 0$, let
	\[
	{\mathcal I}^{\ell}_{a,b,c}(z):=
	\int_\C\bigl|e^{a(z\overline{w})^\ell}\bigr|^2 e^{-b|w|^{2\ell}} (1+|w|)^{c} d\nu(w)\qquad(z\in\C).
	\]
	Then
	\begin{equation}\label{eqn:estimateI}
	{\mathcal I}^{\ell}_{a,b,c}(z)
	\simeq e^{a^2|z|^{2\ell}/b} (1+|z|)^{c+2-2\ell}
	\qquad(z\in\C).
	\end{equation}
\end{lem}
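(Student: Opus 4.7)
\emph{Plan.} The strategy is to reduce $\mathcal{I}^{\ell}_{a,b,c}(z)$ to an explicit power series in $|z|^{2\ell}$ whose asymptotic behaviour is governed exactly by the two parts of Lemma~\ref{lem:Stirling}.

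First I would exploit rotation invariance in $w$: since the weight $e^{-b|w|^{2\ell}}(1+|w|)^{c}$ is radial, the change of variable $w\mapsto e^{i\arg z}w$ shows $\mathcal{I}^{\ell}_{a,b,c}(z)=\mathcal{I}^{\ell}_{a,b,c}(|z|)$, so it suffices to handle $z=r\ge 0$. Writing $w=se^{i\varphi}$, using $|e^{a(r\bar w)^{\ell}}|^{2}=e^{a(r\bar w)^{\ell}}\,e^{a(rw)^{\ell}}$ and expanding both exponentials,
\[
|e^{a(r\bar w)^{\ell}}|^{2}=\sum_{k,m\ge 0}\frac{a^{k+m}r^{(k+m)\ell}s^{(k+m)\ell}}{k!\,m!}\,e^{i(m-k)\ell\varphi},
\]
the integration in $\varphi\in[0,2\pi)$ annihilates every off-diagonal term $k\ne m$ and yields
\[
\mathcal{I}^{\ell}_{a,b,c}(r)=2\pi\sum_{k=0}^{\infty}\frac{a^{2k}\,r^{2k\ell}}{(k!)^{2}}\,J_{k},
\qquad J_{k}:=\int_{0}^{\infty}s^{2k\ell+1}e^{-bs^{2\ell}}(1+s)^{c}\,ds.
\]

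Next I would compute $J_{k}$ via the substitution $u=bs^{2\ell}$, obtaining
\[
J_{k}=\frac{1}{2\ell\,b^{k+1/\ell}}\int_{0}^{\infty} u^{k+1/\ell-1}\,e^{-u}\bigl(1+(u/b)^{1/(2\ell)}\bigr)^{c}\,du.
\]
Since $(1+x)^{c}\simeq 1+x^{c}$ on $[0,\infty)$ (with constants depending only on $c$), this integral is comparable to $\Gamma(k+1/\ell)+b^{-c/(2\ell)}\,\Gamma\bigl(k+1/\ell+c/(2\ell)\bigr)$. Applying Lemma~\ref{lem:Stirling}\eqref{item:Stirling1} twice, with $\delta:=\max\{1-1/\ell,\,c/(2\ell)\}$, gives
\[
\Gamma\bigl(k+1/\ell+c/(2\ell)\bigr)\simeq (k+1)^{c/(2\ell)}\Gamma(k+1/\ell)
\quad\text{and}\quad
\Gamma(k+1/\ell)\simeq (k+1)^{1/\ell-1}\,k!,
\]
so the general term of the series becomes
\[
\frac{a^{2k}r^{2k\ell}J_{k}}{(k!)^{2}}\simeq \frac{(a^{2}r^{2\ell}/b)^{k}}{k!\,b^{1/\ell}}\cdot\frac{1}{(k+1)^{1-1/\ell-c/(2\ell)}}.
\]

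Finally, Lemma~\ref{lem:Stirling}\eqref{item:Stirling2}, applied with $s=a^{2}r^{2\ell}/b$ and with $1-1/\ell-c/(2\ell)$ in place of its exponent parameter, sums the series to
\[
\mathcal{I}^{\ell}_{a,b,c}(r)\simeq \frac{e^{a^{2}r^{2\ell}/b}}{(1+a^{2}r^{2\ell}/b)^{1-1/\ell-c/(2\ell)}}.
\]
Since $1+a^{2}r^{2\ell}/b\simeq(1+r)^{2\ell}$ (with constants depending on $a,b,\ell$) and $2\ell(1-1/\ell-c/(2\ell))=2\ell-2-c$, the denominator is $\simeq(1+r)^{2\ell-2-c}$, producing $e^{a^{2}r^{2\ell}/b}(1+r)^{c+2-2\ell}$, which is precisely \eqref{eqn:estimateI}. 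The only delicate point is that Lemma~\ref{lem:Stirling}\eqref{item:Stirling1} requires the base variable to exceed $2\delta$; the finitely many small-$k$ terms where this fails are bounded positive constants that are harmlessly absorbed into the $\simeq$-constants, and the equivalence $(1+x)^{c}\simeq 1+x^{c}$ is elementary, so the remainder of the argument is a routine chain of estimates.
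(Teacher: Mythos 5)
Your argument is correct and follows essentially the same route as the paper's proof: polar coordinates and orthogonality reduce ${\mathcal I}^{\ell}_{a,b,c}$ to a power series in $|z|^{2\ell}$ whose coefficients are Gamma functions, and the two parts of Lemma~\ref{lem:Stirling} then give the stated asymptotics. The only cosmetic difference is that you split $(1+s)^{c}\simeq 1+s^{c}$ inside the radial integral after the substitution, whereas the paper splits $(1+|w|)^{c}$ at the outset, writing ${\mathcal I}^{\ell}_{a,b,c}\simeq{\mathcal J}^{\ell}_{a,b,0}+{\mathcal J}^{\ell}_{a,b,c}$.
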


\begin{proof}
	It is enough to prove the estimate \eqref{eqn:estimateI} for $|z|\ge 1$.
	Observe that
	\[
	{\mathcal I}^{\ell}_{a,b,c}(z)
	\simeq 		{\mathcal J}^{\ell}_{a,b,0}(z)+{\mathcal J}^{\ell}_{a,b,c}(z),
	\]
	where
	\[
	{\mathcal J}^{\ell}_{a,b,c}(z):=
	\int_\C\bigl|e^{a(z\overline{w})^\ell}\bigr|^2 e^{-b|w|^{2\ell}} |w|^{c} d\nu(w).
	\]
	
	Thus we only have to show that
	\[
	{\mathcal J}^{\ell}_{a,b,c}(z)
	\simeq e^{a^2|z|^{2\ell}/b} |z|^{c+2-2\ell}
	\qquad(|z|\ge 1).
	\]
	Indeed, by integrating in polar coordinates and orthogonality,
	\begin{align*}
	{\mathcal J}^{\ell}_{a,b,c}(z)
	&\simeq\sum_{k=0}^\infty \int_0^\infty
	\frac{a^{2k}|z|^{2k\ell}}{(k!)^2}e^{-br^{2\ell}}r^{2k\ell+c+1}dr\\
	&\simeq\sum_{k=0}^\infty
	\frac{a^{2k}|z|^{2k\ell}}{b^{k+(c+2)/(2\ell)}}
	\frac1{(k!)^2}\int_0^\infty e^{-t}t^{(2k\ell+c+2)/(2\ell)-1}dt\\
	&\simeq\sum_{k=0}^\infty
	\frac{a^{2k}|z|^{2k\ell}}{b^{k}}
	\frac{\Gamma(k+(c+2)/(2\ell))}{(k!)^2}.
	\end{align*}
	Therefore Lemma \ref{lem:Stirling} completes the proof:
	\[
	{\mathcal J}^{\ell}_{a,b,c}(z)\simeq
	\sum_{k=0}^\infty
	\frac{a^{2k}|z|^{2k\ell}}{b^{k}}
	\frac{1}{k!(k+1)^{(2\ell-2-c)/(2\ell)}}
	\simeq e^{a^2|z|^{2\ell}/b} |z|^{c+2-2\ell}.\qedhere
	\]
\end{proof}

\begin{proof}[Proof of the necessity]
	Let $1\le p\le \infty$ and $b\in H^{\infty,\ell}_{\alpha}$.
	Suppose that $\hbla:(E,\|\cdot\|_{F\pla})\to L\pla$ is bounded and we want to prove that
	$b\in F^{\infty,\ell}_{\alpha/2}$ and $\|b\|_{F^{\infty,\ell}_{\alpha/2}}\lesssim\|\hbla\|_{F\pla}$.
	
	First of all,
	by Proposition~{\ref{prop:dilations:small;Hankel:operator}}
	we may assume that $\alpha=1$.
	Now \eqref{eqn:reproducing:property} gives that
	\begin{equation} \label{eqn:reproducing:property:bis}
	\overline{b(z)}=\int_\C
	K^\ell_1(w,z)\,\overline{b(w)}\,e^{-|w|^{2\ell}}\,d\nu(w)
	=\langle K^{\ell}_1(\cdot,z),\,b\rangle^{\ell}_1.
	\end{equation}
	
	We decompose the Bergman kernel as
	\begin{equation*}\label{eqn:bergmankernel:decomposition}
	K^\ell_1(w,z)   = G_0(w,z)G_1(w,z),
	\end{equation*}
	where
	\begin{equation}\label{eqn:defnG0G1}
	G_0(w,z):= e^{\frac{(w\overline z)^\ell}{2}}\quad\text{and}\quad G_1(w,z):= e^{-\frac{(w\overline z)^\ell}{2}}K^\ell_1(w,z).
	\end{equation}

	By Proposition \ref{prop:pointwise},  $G_{0}(\cdot,z), G_1(\cdot,z)\in E$, and so \eqref{eqn:reproducing:property:bis} and
	Proposition~{\ref{prop:hankelform}} show
	\begin{equation}\label{eqn:symbol:representation}
	\overline{b(z)}=
	\langle G_1(\cdot,z),\,
	\overline{\mathfrak{h}^{\ell}_{b,1}(G_0(\cdot,z))}
	\rangle^{\ell}_1.
	\end{equation}
	
	Therefore the boundedness of $\mathfrak{h}^{\ell}_{b,1}$ implies that
	\begin{equation}\label{eqn:estimate:boundedness}
	|b(z)|
	\lesssim
	\|\mathfrak{h}^{\ell}_{b,1}\|_{F^{p,\ell}_1}
	\| G_0(\cdot,z)\|_{F^{p,\ell}_{1} }
	\|G_{1}(\cdot,z)\|_{F^{p',\ell}_{1}}.
	\end{equation}

	We claim that:
	\begin{eqnarray}
	\| G_{0}(\cdot,z)\|_{F^{p,\ell}_{1} }&\simeq& (1+|z|)^{2(1-\ell)/p}\,e^{|z|^{2\ell}/8}
	\label{eqn:G0:estimate}\\
	\| G_{1}(\cdot,z)\|_{F^{p',\ell}_{1} }&\lesssim& (1+|z|)^{2(\ell-1)/p}\,e^{|z|^{2\ell}/8}
	\label{eqn:G1:estimate}
	\end{eqnarray}
	
	These norm-estimates together
	with~{\eqref{eqn:estimate:boundedness}} give
	$|b(z)|\lesssim \|\hbla\|_{F^{p,\ell}_1}\, e^{|z|^{2\ell}/4}$.
	
	Now,  for $1\le p<\infty$, \eqref{eqn:G0:estimate}	is a consequence of Lemma \ref{lem:estimate}:
	\begin{align*}
	\| G_{0}(\cdot,z)\|^p_{F^{p,\ell}_{1} }&=
	\int_\C\left|e^{p(z\overline w)^\ell/4}\right|^2e^{-p|w|^{2\ell}/2} d\nu(w)
	={\mathcal I}^{\ell}_{p/4,p/2,0}(z)\\
	&\simeq (1+|z|)^{2(1-\ell)}\,e^{p|z|^{2\ell}/8}.
	\end{align*}
	
	If $p=\infty$, using the identity
	\begin{equation}\label{eqn:completarquadrats}
	\Re((z\overline{w})^\ell)-|w|^{2\ell}=-|w^\ell-z^\ell/2|^2+|z|^{2\ell}/4,
	\end{equation}
	we obtain
	\[
	\| G_{0}(\cdot,z)\|_{F^{\infty,\ell}_{1} }=\sup_{w\in\C}|e^{(z\overline w)^\ell/2}|e^{-|w|^{2\ell}/2}= \,e^{|z|^{2\ell}/8}.
	\]
	
	On the other hand, by Proposition   \ref{prop:pointwise}
\begin{align*}
|G_1(w,z)| &\lesssim
(1+|zw|)^{\ell-1}\left(
e^{\Re((z\overline w)^\ell)/2}+
e^{-\Re((z\overline w)^\ell)/2}
\right)
\\ & \lesssim
(1+|z|)^{\ell-1}(1+|w|)^{\ell-1}
\left(
e^{\Re((z\overline w)^\ell)/2}+
e^{-\Re((z\overline w)^\ell)/2}
\right).
\end{align*}
	
	Therefore, for $1\le p'<\infty$, we have
	\[
	\| G_{1}(\cdot,z)\|_{F^{p',\ell}_{1} }\lesssim J_1(z)^{1/p'}+J_2(z)^{1/p'},
	\]
	where
	\[
	J_1(z):=	(1+|z|)^{p'(\ell-1)}\int_\C\left|e^{p'(z\overline w)^\ell/4}\right|^2e^{-p'|w|^{2\ell}/2}
	(1+|w|)^{p'(\ell-1)}d\nu(w)
	\]
	and
	\[
	J_2(z):=	(1+|z|)^{p'(\ell-1)}\int_\C\left|e^{-p'(z\overline w)^\ell/4}\right|^2e^{-p'|w|^{2\ell}/2}
	(1+|w|)^{p'(\ell-1)}d\nu(w).
	%=J_1(e^{i\pi/\ell}z).
	\]
	By Lemma \ref{lem:estimate},
	\[
	J_1(z)
	=(1+|z|)^{p'(\ell-1)}\,{\mathcal I}^{\ell}_{p'/4,p'/2,p'(\ell-1)}(z)\\
	\simeq (1+|z|)^{2(p'-1)(\ell-1)}\,e^{p'|z|^{2\ell}/8}.
	\]
	Since $J_2(z)=J_1(e^{i\pi/\ell}z)$,
	we obtain the estimate \eqref{eqn:G1:estimate}.
	
	If $p'=\infty$, by using \eqref{eqn:completarquadrats},
	%using the equalities
	%\begin{align*}
	%\Re((z\overline{w})^\ell)-|w|^{2\ell}&=-|w^\ell-z^\ell/2|^2+|z|^{2\ell}/4,\\
	%-\Re((z\overline{w})^\ell)-|w|^{2\ell}&=-|w^\ell+z^\ell/2|^2+|z|^{2\ell}/4,
	%\end{align*}
	we have
	\[
	\| G_{1}(\cdot,z)\|_{F^{\infty,\ell}_{1} }=\sup_{w\in\C}|G_1(w,z)|e^{-|w|^{2\ell}/2}\lesssim (1+|z|)^{2(\ell-1)}\,e^{|z|^{2\ell}/8}.\qedhere
	\]
\end{proof}

\section{Proof of Theorem \ref{thm:main3}}
\label{sect:ProofThm3}

The next proposition will be used to prove Theorem \ref{thm:main3}.

\begin{prop}\label{prop:duall12a}
	The dual of $F^{1,\ell}_{2\alpha}$ with respect to the pairing $\langle\cdot,\cdot\rangle\la$
	is $F^{\infty,\ell}_{\alpha/2}$.
\end{prop}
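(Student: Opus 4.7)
The plan is to establish both inclusions of the duality, using Proposition~\ref{prop:PtaLinf} (which identifies $F^{\infty,\ell}_{\alpha/2}$ as $P\la(L^\infty)$) together with the self-adjointness of $P\la$ on $L\dla$. Note that any $\psi\in L^\infty$ automatically belongs to $L\dla$, since the Gaussian weight $e^{-\alpha|z|^{2\ell}}$ is integrable on $\C$, so holomorphic polynomials and bounded functions both sit inside $L\dla$, where the orthogonality of $P\la$ can be freely invoked.

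For the inclusion $F^{\infty,\ell}_{\alpha/2}\hookrightarrow (F^{1,\ell}_{2\alpha})^*$, take $b\in F^{\infty,\ell}_{\alpha/2}$ and, by Proposition~\ref{prop:PtaLinf}, write $b=P\la\varphi$ with $\varphi\in L^\infty$ and $\|\varphi\|_{L^\infty}\lesssim\|b\|_{F^{\infty,\ell}_{\alpha/2}}$. For a holomorphic polynomial $f$ we have $f,\varphi\in L\dla$ and $P\la f=f$, so the self-adjointness of $P\la$ on $L\dla$ yields
\[
\langle f,b\rangle\la=\langle f,P\la\varphi\rangle\la=\langle P\la f,\varphi\rangle\la=\langle f,\varphi\rangle\la.
\]
Consequently,
\[
|\langle f,b\rangle\la|\le\|\varphi\|_{L^\infty}\int_\C|f(z)|e^{-\alpha|z|^{2\ell}}\,d\nu(z)=\|\varphi\|_{L^\infty}\|f\|_{F^{1,\ell}_{2\alpha}}\lesssim\|b\|_{F^{\infty,\ell}_{\alpha/2}}\|f\|_{F^{1,\ell}_{2\alpha}}.
\]
Since polynomials are dense in $F^{1,\ell}_{2\alpha}$, the linear form $f\mapsto\langle f,b\rangle\la$ extends uniquely to a bounded functional on $F^{1,\ell}_{2\alpha}$ of norm controlled by $\|b\|_{F^{\infty,\ell}_{\alpha/2}}$.

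For the converse, take $u\in(F^{1,\ell}_{2\alpha})^*$. Since $L^{1,\ell}_{2\alpha}=L^1(\C,e^{-\alpha|z|^{2\ell}}\,d\nu)$, the Hahn-Banach theorem extends $u$ to $\tilde u\in(L^{1,\ell}_{2\alpha})^*$ with the same norm, and the classical $(L^1)^*=L^\infty$ duality provides $\psi\in L^\infty$ with $\|\psi\|_{L^\infty}=\|u\|$ such that $\tilde u(f)=\langle f,\psi\rangle\la$ for all $f\in L^{1,\ell}_{2\alpha}$. Setting $b:=P\la\psi$, Proposition~\ref{prop:PtaLinf} gives $b\in F^{\infty,\ell}_{\alpha/2}$ with $\|b\|_{F^{\infty,\ell}_{\alpha/2}}\lesssim\|u\|$. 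For any polynomial $f$, the same self-adjointness computation produces
\[
u(f)=\langle f,\psi\rangle\la=\langle P\la f,\psi\rangle\la=\langle f,P\la\psi\rangle\la=\langle f,b\rangle\la,
\]
and by density of the polynomials in $F^{1,\ell}_{2\alpha}$ this identity propagates to all of $F^{1,\ell}_{2\alpha}$, closing the proof.

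The main obstacle is conceptual rather than technical: the pairing $\langle f,b\rangle\la$ is \emph{not} absolutely convergent for arbitrary $f\in F^{1,\ell}_{2\alpha}$ and $b\in F^{\infty,\ell}_{\alpha/2}$, because the best pointwise bound on $f$ allows growth of order $e^{\alpha|z|^{2\ell}}$ which, combined with the growth $e^{\alpha|z|^{2\ell}/4}$ of $b$, defeats the weight $e^{-\alpha|z|^{2\ell}}$. The functional $\langle\cdot,b\rangle\la$ must therefore be interpreted as the continuous extension of its restriction to polynomials, and the entire argument is carried out at this polynomial level, where the exponential integrability of the weight makes all integrals absolutely convergent and the self-adjointness step of $P\la$ rigorous.
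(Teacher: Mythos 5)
Your proof is correct, and its overall architecture is the same as the paper's: both inclusions ultimately rest on Proposition~\ref{prop:PtaLinf} (i.e.\ $F^{\infty,\ell}_{\alpha/2}=P\la(L^\infty)$) together with the self-adjointness of $P\la$, and the inclusion $F^{\infty,\ell}_{\alpha/2}\hookrightarrow(F^{1,\ell}_{2\alpha})^*$ is argued exactly as in the paper. The one place where you genuinely diverge is in producing an $L^\infty$ representative of a given functional $u\in(F^{1,\ell}_{2\alpha})^*$: the paper invokes its already-established duality $(F^{1,\ell}_{2\alpha})^*\equiv F^{\infty,\ell}_{2\alpha}$ (Proposition~\ref{prop:Interp-dual}\eqref{item:Interp-dual3} with $p=1$) to get $h\in F^{\infty,\ell}_{2\alpha}$ and then observes that $\varphi(z)=h(z)e^{-\alpha|z|^{2\ell}}$ is bounded, whereas you extend $u$ by Hahn--Banach to $L^{1,\ell}_{2\alpha}=L^1(\C,e^{-\alpha|z|^{2\ell}}\,d\nu)$ and use the classical $(L^1)^*=L^\infty$ duality. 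Both routes are valid; yours is slightly more self-contained (it bypasses the $F^1$--$F^\infty$ duality, which the paper only cites), at the cost of re-running the Hahn--Banach argument that is implicitly behind that cited result. Your closing remark about interpreting $\langle\cdot,b\rangle\la$ as the continuous extension from a dense subspace is a legitimate point of care that the paper leaves implicit by working on $E$; it does not affect the validity of either argument.
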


\begin{proof}
	By Proposition \ref{prop:Interp-dual}, if $\Phi\in \left(F^{1,\ell}_{2\alpha}\right)^*$, there exists a unique
	$h\in F^{\infty,\ell}_{2\alpha}$ such that
	$$
	\Phi(f)=\langle f,h\rangle^\ell_{2\alpha}=\langle f(z),h(z)e^{-\alpha|z|^{2\ell}}\rangle\la,\quad\mbox{for any $f\in E$.}
	$$
	Since  $\varphi(z)=h(z)e^{-\alpha|z|^{2\ell}}\in L^\infty$, Proposition \ref{prop:PtaLinf}  gives
	$g=P\la(\varphi)\in F^{\infty,\ell}_{\alpha/2}$, so
	$$
	\Phi(f)=\langle f,g \rangle\la,\quad\mbox{for any $f\in E$.}
	$$
	
	Conversely, if $g\in F^{\infty,\ell}_{\alpha/2}$,
	by Proposition \ref{prop:PtaLinf} there exists $\varphi\in L^\infty$ such that
	$P\la(\varphi)=g$ and $\|\varphi\|_{L^\infty}\simeq \|g\|_{F^{\infty,\ell}_{\alpha/2}}$.
	Thus, for $f\in E$, we have
	$$
	|\langle f,g\rangle\la|=|\langle f,\varphi\rangle\la|\le \|\varphi\|_{L^\infty}\|f\|_{F^{1,\ell}_{2\alpha}}.
	$$
	
	This ends the proof.
\end{proof}

\begin{proof}[Proof of Theorem \ref{thm:main3}]
	The proof of this result follows from standard arguments used in the setting  of
	classical spaces of holomorphic functions.   We only include  a sketch of the proof
	for the sake of completeness.

	\begin{enumerate}
		\item  It is a consequence of Corollary \ref{cor:hankelform} and Theorem
		\ref{thm:main1}.
		
		\item
		It is a consequence of Propositions
		\ref{prop:PtaLinf} and \ref{prop:duall12a}.
		
		\item
		First we consider the case $1<p<\infty$. By~{\eqref{item:main32}}, in order to show that  $F^{p,\ell}_\alpha\odot F^{p',\ell}_\alpha=F^{1,\ell}_{2\alpha}$, it is enough to prove that the dual of
		$F^{p,\ell}_\alpha\odot F^{p',\ell}_\alpha$ with respect to the pairing $\langle \cdot,\,\cdot\rangle^{\ell}_{\alpha}$ is $F^{\infty,\ell}_{\alpha/2}$.
		
		By~{\eqref{item:main31}},
		if $b\in F^{\infty,\ell}_{\alpha/2}$ then $\Lambda^{\ell}_{b,\alpha}$ defines a bounded bilinear form on
		$F^{p,\ell}_\alpha\times F^{p',\ell}_\alpha$, so
		$h\mapsto \Lambda^{\ell}_{b,\alpha}(h,1)$ is a bounded linear form on $F^{p,\ell}_\alpha\odot F^{p',\ell}_\alpha$.

		Conversely, it is clear that any form $\Phi$ on $F^{p,\ell}_\alpha\odot F^{p',\ell}_\alpha$
		defines a bounded linear form on
		$F^{p,\ell}_\alpha$. Thus, by
		Proposition~{\ref{prop:Interp-dual}\eqref{item:Interp-dual3},}
		there exists $b\in F^{p',\ell}_\alpha$ such that $\Phi(h)=\Lambda^{\ell}_{b,\alpha}(h,1)$,
		for any $h\in E$.
		Since the space $E$ is dense in
		$F\pla$ and $F^{p',\ell}_{\alpha}$, the bilinear form  $\Lambda^{\ell}_{b,\alpha}$
		extends boundedly to  $F^{p,\ell}_\alpha\odot F^{p',\ell}_\alpha$.
		Thus, by part~{\eqref{item:main31}}, $b\in F^{\infty,\ell}_{\alpha/2}$.
		
		Similar arguments, using
		Proposition~{\ref{prop:Interp-dual}\eqref{item:Interp-dual4},} prove that
		$F^{1,\ell}_\alpha\odot \mathfrak{f}^{\infty,\ell}_\alpha=F^{1,\ell}_{2\alpha}$.
		Since $F^{1,\ell}_\alpha\odot F^{\infty,\ell}_\alpha\subset F^{1,\ell}_{2\alpha}$,
		we have
		$$
		F^{1,\ell}_{2\alpha}=
		F^{1,\ell}_\alpha\odot\mathfrak{f}^{\infty,\ell}_\alpha\subset
		F^{1,\ell}_\alpha\odot F^{\infty,\ell}_\alpha\subset F^{1,\ell}_{2\alpha},
		$$
		which ends the proof.
		\qedhere
	\end{enumerate}
\end{proof}

\section{Proof of Theorem~{\ref{thm:main2}}}
\label{sect:ProofThm2}

In order to prove Theorem~{\ref{thm:main2}} we will use  a standard technique
based on the  following lemma.
\begin{lem}\label{lem:weakly:convergence}
Let $1<p\le\infty$, $\ell\in\N$ and $\alpha>0$. Let
        $\{g_n\}_{n\in\N}$ be a sequence of functions in $E$. Then, the
        following conditions are equivalent:
        \begin{enumerate}
            \item \label{item:weakly:convergence1}
            $g_n\to
            0$ weakly in $F^{p,\ell}_{\alpha}$, if $p<\infty$, and in
            $\mathfrak{f}^{\infty,\ell}_{\alpha}$, if $p=\infty$.
            \item \label{item:weakly:convergence2}
            $g_n\to 0$ uniformly on compact subsets of $\C$ and ${\displaystyle\sup_{n\in\N}\|g_n\|_{F^{p,\ell}_{\alpha}}<\infty}$.
        \end{enumerate}
\end{lem}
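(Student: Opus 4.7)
The plan is to argue the two implications separately, relying on the duality descriptions in Proposition~{\ref{prop:Interp-dual}}, the pointwise growth estimate from Lemma~{\ref{lem:propertiesF}} (i.e., $|f(z)|\lesssim(1+|z|)^{(2\ell-2)/p}e^{\alpha|z|^{2\ell}/2}\|f\|_{F\pla}$), and the classical Banach--Steinhaus/Montel machinery.

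For the implication \eqref{item:weakly:convergence1}$\Rightarrow$\eqref{item:weakly:convergence2}, I would first obtain uniform boundedness of $\|g_n\|$ from the uniform boundedness principle, since every weakly convergent sequence in a Banach space is norm-bounded. To upgrade the resulting pointwise information to uniform convergence on compact subsets, I would observe that for every fixed $z\in\C$ the point evaluation $f\mapsto f(z)$ is a bounded linear functional on $F\pla$ (for $p<\infty$) and on $\mathfrak{f}^{\infty,\ell}_{\alpha}$ (for $p=\infty$), so that $g_n(z)\to 0$ for every $z$. Combined with the pointwise estimate from Lemma~{\ref{lem:propertiesF}}, the sequence $\{g_n\}$ is locally uniformly bounded on $\C$, hence normal by Montel's theorem; since every locally uniformly convergent subsequence must converge to the pointwise limit $0$, the whole sequence converges to $0$ uniformly on compact subsets.

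For the converse direction \eqref{item:weakly:convergence2}$\Rightarrow$\eqref{item:weakly:convergence1}, I would test the sequence against a generic element of the dual. By Proposition~{\ref{prop:Interp-dual}}, it suffices to show that $\langle g_n,h\rangle\la\to 0$ for every $h\in F^{p',\ell}_\alpha$ (when $1<p<\infty$) or every $h\in F^{1,\ell}_\alpha$ (when $p=\infty$, using the dual pairing of $\mathfrak{f}^{\infty,\ell}_{\alpha}$). Given $\varepsilon>0$, I would split
\[
\langle g_n,h\rangle\la=\int_{|z|\le R}g_n(z)\overline{h(z)}\,e^{-\alpha|z|^{2\ell}}d\nu(z)
+\int_{|z|>R}g_n(z)\overline{h(z)}\,e^{-\alpha|z|^{2\ell}}d\nu(z).
\]
For the tail integral, H\"older's inequality (or the trivial $L^\infty$--$L^1$ estimate when $p=\infty$) bounds the modulus by $\sup_n\|g_n\|_{F\pla}$ times the $L$-norm of the truncation of $h$ on $\{|z|>R\}$ in the dual weighted space; this can be made smaller than $\varepsilon$ by choosing $R$ large, using absolute continuity of the integral for $h$. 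For the integral over $\{|z|\le R\}$, the uniform convergence $g_n\to 0$ on the compact disk, together with the fact that $\overline{h}e^{-\alpha|\cdot|^{2\ell}}$ is integrable there, makes it arbitrarily small for $n$ large.

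The proof is largely routine; the only mildly delicate point is the $p=\infty$ case, where one must work inside $\mathfrak{f}^{\infty,\ell}_{\alpha}$ (rather than $F^{\infty,\ell}_\alpha$) so as to have a valid duality, namely $(\mathfrak{f}^{\infty,\ell}_{\alpha})^{*}\equiv F^{1,\ell}_\alpha$, and so that the truncation argument of $h$ in $F^{1,\ell}_\alpha$ applies. No technical obstruction should arise beyond organizing these ingredients.
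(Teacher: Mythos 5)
Your argument is correct and follows essentially the same route as the paper: for \eqref{item:weakly:convergence1}$\Rightarrow$\eqref{item:weakly:convergence2} one uses norm-boundedness of weakly convergent sequences plus pointwise convergence (via the reproducing kernels $K^{\ell}_{\alpha,z}\in F^{p',\ell}_{\alpha}$, resp.\ $F^{1,\ell}_{\alpha}$, realizing the point evaluations) and Montel's theorem; for the converse one tests against the dual given by Proposition~\ref{prop:Interp-dual} and splits the integral at radius $R$, using that the dual exponent is finite to control the tail. No gaps; this matches the paper's proof.
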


\begin{proof}
    Assume that~{\eqref{item:weakly:convergence1}} holds.
    Then it is well known that
    $\sup_{n\in\N}\|g_n\|_{F^{p,\ell}_{\alpha}}<\infty$, so $\{g_n\}$ is uniformly
    bounded on compact subsets of $\C$. Moreover, since $g_n\to 0$
    weakly in $F^{p,\ell}_{\alpha}$, then, for each $z\in\C$,
    \[ g_n(z)=
    \langle g_n, K^{\ell}_{\alpha,z} \rangle^{\ell}_{\alpha}\to 0, \quad
    \mbox{as $n\to \infty$.}\] Consequently, $g_n\to 0$ uniformly on compact subsets
    of $\C$, by Montel's theorem.
    \par Reciprocally, assume that~{\eqref{item:weakly:convergence2}}  holds.
    By
    Proposition~{\ref{prop:Interp-dual}\eqref{item:Interp-dual3}-\eqref{item:Interp-dual4}},
    we have to show that $\langle f,\,g_n\rangle^{\ell}_{\alpha}\to 0$,
    as $n\to\infty$, for every $f\in F^{p',\ell}_{\alpha}$.

    Let $f\in F^{p',\ell}_{\alpha}$. Then, for every $R>0$, we have
    \[
    \langle f,\,g_n\rangle^{\ell}_{\alpha}= \biggl\{\int_{|w|\le
        R}+\int_{|w|>R}\biggr\}
    f(w)g_n(w)e^{-|w|^{2\ell}}d\nu(w)=I_n(R)+J_n(R).
    \]
    Since $p'<\infty$, we have that
    $\int_{|w|>R}|f(w)e^{-|w|^{2\ell}/2}|^{p'}\,d\nu(w)\to0$, as
    $R\to\infty$, so
    \[
    \lim_{R\to\infty}\sup_{n\in\N}J_n(R)=0,
    \]
    by H\"{o}lder's inequality and the fact that
    $\sup_{n\in\N}\|g_n\|_{F^{p,\ell}_{\alpha}}<\infty$. Moreover, since
    $g_n\to 0$ uniformly on compact subsets of $\C$ then $I_n(R)\to0$,
    as $n\to\infty$, for every $R>0$. It turns out that $\langle
    f,\,g_n\rangle^{\ell}_{\alpha}\to 0$, as $n\to\infty$, and the proof
    is complete.
\end{proof}

\begin{proof}[Proof of Theorem~{\ref{thm:main2}}]
 By Proposition~{\ref{prop:dilations:small;Hankel:operator}}
 we only have to prove Theorem~{\ref{thm:main2}} for
 $\alpha=1$.

 First we prove  that, if either $\mathfrak{h}^{\ell}_{b,1}:
 F^{p,\ell}_1\to \overline{F^{p,\ell}_1}$
 , $1<p<\infty$, or
 $\mathfrak{h}^{\ell}_{b,1}:
 \mathfrak{f}^{\infty,\ell}_1\to
 \overline{\mathfrak{f}^{\infty,\ell}_1}$
 is compact, then $b\in \mathfrak{f}^{\infty,\ell}_{1/2}$.

 Suppose that  $\mathfrak{h}^{\ell}_{b,1}:
F^{p,\ell}_1\to L^{p,\ell}_1$ is compact and
 we want to prove that
 $b\in \mathfrak{f}^{\infty,\ell}_{1/2}$.

Let $G_0, G_1$ be the functions defined  by~{\eqref{eqn:defnG0G1}}.

Since $\mathfrak{h}^{\ell}_{b,1}:
F^{p,\ell}_1\to L^{p,\ell}_1$
($\mathfrak{h}^{\ell}_{b,1}:\mathfrak{f}^{\infty,\ell}_
1\to L^{\infty,\ell}_1$)
is bounded,
the proof of the necessity in
Theorem~{\ref{thm:main1}}
(see~{\S\ref{subsec:proof:necessity:boundedness}}) implies
that~{\eqref{eqn:symbol:representation}} holds, and so
\begin{align*}
|b(z)|&\lesssim
\|\mathfrak{ h}^{\ell}_{b,1}(G_{0}(\cdot,z))\|_{L^{p,\ell}_{1}}\,
\|G_{1}(\cdot,z)\|_{F^{p',\ell}_{1}}\\
&=
\|\mathfrak{ h}^{\ell}_{b,1}(g_{0}(\cdot,z))\|_{L^{p,\ell}_{1}}\,
\|G_{0}(\cdot,z)\|_{F^{p,\ell}_{1}}
\|G_{1}(\cdot,z)\|_{F^{p',\ell}_{1}},
\end{align*}
where $g_0(w,z)=G_0(w,z)/\|G_{0}(\cdot,z)\|_{F^{p,\ell}_{1}}$.
Then~{\eqref{eqn:G0:estimate}} and~{\eqref{eqn:G1:estimate}}
show that
\[
|b(z)|e^{-|z|^{2\ell}/4}\lesssim
\|{\mathfrak h}^{\ell}_{b,1}(g_{0}(\cdot,z))\|_{L^{p,\ell}_{1}},
\]

It is easy to check that $g_0(\cdot,z)\to 0$ uniformly
on compact subsets of $\C$, as $|z|\to\infty$. By
Lemma~{\ref{lem:weakly:convergence}} it follows that
$g_0(\cdot,z)\to 0$  weakly in $F^{p,\ell}_1$, as $|z|\to\infty$.
Note that, if $p=\infty$, the same arguments show  that  $g_0(\cdot,z)\to 0$  weakly in $\mathfrak{f}^{\infty,\ell}_1$.

Then the compactness of $\mathfrak{h}^{\ell}_{b,1}:
F^{p,\ell}_1\to L^{p,\ell}_1$, $1<p<\infty$,
($\mathfrak{h}^{\ell}_{b,1}:\mathfrak{f}^{\infty,\ell}_
1\to L^{\infty,\ell}_1$, respectively) shows that
\begin{equation}\label{eqn:norm:convergence:to:0}
\lim_{|z|\to \infty}
\|\mathfrak{h}^{\ell}_{b,1}(g_0(\cdot,z))\|_{L^{p,\ell}_{1}}=0.
\end{equation}

and so~{\eqref{eqn:norm:convergence:to:0}} gives that
$|b(z)|e^{-|z|^{2\ell}/4}\to0$, as $|z|\to\infty$.

Now we consider the case $p=1$.
By Corollary \ref{cor:hankelform}, the operator  $\mathfrak{h}^{\ell}_{b,1}:
F^{1,\ell}_1\to \overline{F^{1,\ell}_1}$
is the adjoint of
$\mathfrak{h}^{\ell}_{b,1}:{\mathfrak f}^{\infty,\ell}_1\to
\overline{{\mathfrak f}^{\infty,\ell}_1}$.
Thus the compactness of the first operator implies  the
compactness of the second operator and, as we have just
shown, this implies that
$b\in \mathfrak{f}^{\infty,\ell}_{1/2}$.

Now assume that $b\in\mathfrak{f}^{\infty,\ell}_{1/2}$ and
$1\le p<\infty$. Then, by Theorem~{\ref{thm:main1}},
$\mathfrak{h}^{\ell}_{b,1}$ is a bounded operator
from $F^{p,\ell}_1$ to $\overline{F^{p,\ell}_1}$.
Moreover, since $\mathfrak{f}^{\infty,\ell}_{1/2}$ is the
closure of the polynomials in $F^{\infty,\ell}_{1/2}$, there
is a sequence of polynomials $\{P_n\}_{n\in\N}$ such that
$\|P_n-b\|_{F^{\infty,\ell}_{1/2}}\to0$.
Therefore
$\|\mathfrak{h}^{\ell}_{b,1}
-\mathfrak{h}^{\ell}_{P_n,1}\|_{F^{p,\ell}_1}\to0$, because
\[
\|\mathfrak{h}^{\ell}_{P_n,1}
-\mathfrak{h}^{\ell}_{b,1}\|_{F^{p,\ell}_1}
=\|\mathfrak{h}^{\ell}_{P_n-b,1}\|_{F^{p,\ell}_1}\lesssim \|P_n-b\|_{F^{\infty,\ell}_{1/2}},
\]
by Theorem~{\ref{thm:main1}} again. Since
$\{\mathfrak{h}^{\ell}_{P_n,1}\}_{n\ge0}$ is a sequence of finite
rank operators, it follows that
$\mathfrak{h}^{\ell}_{b,1}:F^{p,\ell}_{1}\to\overline{F^{p,\ell}_{1}}$
is compact.
\par Note that the above argument also works by replacing the
 space $F^{p,\ell}_{1}$ by $\mathfrak{f}^{\infty,\ell}_1$, and hence the proof of Theorem~{\ref{thm:main2}}
 is complete.
 \end{proof}

\section{Proof of  Theorem~{\ref{thm:main4}}}
\label{sect:ProofThm4}

\subsection{The small Hankel operator on $F\dla$}\quad\par

By Proposition~{\ref{prop:dilations:small;Hankel:operator}}
it is enough to prove the result for $\alpha=1$, that is,
 to prove
\begin{equation}\label{eqn:main:estimate}
\|\mathfrak{h}^{\ell}_{b,1}\|_{\mathcal{S}_2(F^{2,\ell}_1)}^2
\simeq\|b\|^2_{F^{2,\ell}_{1/2,\Delta}}.
\end{equation}

In order to do that, first we estimate
$\|\mathfrak{h}^{\ell}_{b,1}\|_{\mathcal{S}_2(F^{2,\ell}_1)}^2$ and
$\|b\|^2_{F^{2,\ell}_{1/2,\Delta}}$ in terms of the Taylor coefficients of $b$.

\begin{lem}\label{lem:coefficients}
Let $\ell\in \N$  and let $b(z)=\sum_{m=0}^{\infty}c_mz^m$ be a
function in $H^{\infty,\ell}_1$. Then
\begin{equation}\label{eqn:coefficients:eq1}
\|\mathfrak{h}^{\ell}_{b,1}\|_{\mathcal{S}_2(F^{2,\ell}_1)}^2
\simeq\sum_{m=0}^{\infty}|c_m|^2\,\,
\Gamma\Bigl(\frac{m+1}{\ell}\Bigr)^2\,
\sum_{k=0}^m\frac1{\Gamma\bigl(\frac{k+1}{\ell}\bigr)
    \Gamma\bigl(\frac{m-k+1}{\ell}\bigr)},
\end{equation}
and
\begin{equation}\label{eqn:coefficients:eq2}
\|b\|^2_{F^{2,\ell}_{1/2,\Delta}}\simeq
\sum_{m=0}^{\infty}|c_m|^2\,\,2^{m/\ell}\,
\Gamma\Bigl(\frac{m}{\ell}+1\Bigr).
\end{equation}
\end{lem}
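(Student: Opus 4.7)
The plan is to compute both sides directly in terms of the Taylor coefficients, using the orthonormal basis $\{e_m\}_{m\ge0}$ of $F^{2,\ell}_1$ provided by Proposition~\ref{prop:kernel}.

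For~\eqref{eqn:coefficients:eq1}, I start from
\[
\|\mathfrak{h}^{\ell}_{b,1}\|_{\mathcal{S}_2(F^{2,\ell}_1)}^2
=\sum_{n=0}^{\infty}\|\mathfrak{h}^{\ell}_{b,1}(e_n)\|_{L^{2,\ell}_1}^2,
\]
so the first task is to obtain an explicit expression for $\mathfrak{h}^{\ell}_{b,1}(z^n)$. Substituting the series expansion~\eqref{eqn:kernel} of $K^{\ell}_1$ and the Taylor expansion of $b$ into the integral~\eqref{eqn:def:small:Hankel}, the polar-coordinate orthogonality relations $\int_{\C}w^j\bar w^k e^{-|w|^{2\ell}}d\nu(w)=\delta_{jk}\|z^j\|_{F^{2,\ell}_1}^2$ collapse the double series to
\[
\mathfrak{h}^{\ell}_{b,1}(z^n)(z)=\sum_{m\ge n}\overline{c_m}\,\bar z^{m-n}\,\frac{\|z^m\|_{F^{2,\ell}_1}^2}{\|z^{m-n}\|_{F^{2,\ell}_1}^2}.
\]
Then I compute $\|\mathfrak{h}^{\ell}_{b,1}(z^n)\|_{L^{2,\ell}_1}^2$ as the $F^{2,\ell}_1$-norm of the holomorphic conjugate, apply monomial orthogonality a second time, divide by $\|z^n\|_{F^{2,\ell}_1}^2$, and sum over $n$; swapping the order of summation and setting $k=n$ gives
\[
\|\mathfrak{h}^{\ell}_{b,1}\|_{\mathcal{S}_2(F^{2,\ell}_1)}^2
=\sum_{m=0}^{\infty}|c_m|^2\,\|z^m\|_{F^{2,\ell}_1}^4
\sum_{k=0}^m\frac{1}{\|z^k\|_{F^{2,\ell}_1}^2\,\|z^{m-k}\|_{F^{2,\ell}_1}^2}.
\]
Inserting the value $\|z^m\|_{F^{2,\ell}_1}^2=\frac{\pi}{\ell}\Gamma\bigl(\tfrac{m+1}{\ell}\bigr)$ from Proposition~\ref{prop:kernel} produces~\eqref{eqn:coefficients:eq1} up to the multiplicative constant $\pi/\ell$.

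For~\eqref{eqn:coefficients:eq2}, I expand $b$ in Taylor series and use orthogonality of monomials in $L^2$ of a radial weight to reduce to a one-dimensional integral:
\[
\|b\|^2_{F^{2,\ell}_{1/2,\Delta}}=2\pi\sum_{m=0}^{\infty}|c_m|^2\int_0^{\infty}r^{2m+1}e^{-r^{2\ell}/2}(1+r)^{2(\ell-1)}dr.
\]
I split the radial integral at $r=1$. The contribution from $r\le 1$ is bounded by a constant times $1/(m+1)$. On $r\ge 1$ I use $(1+r)^{2(\ell-1)}\simeq r^{2(\ell-1)}$ and the change of variable $t=r^{2\ell}/2$, which turns the integral into $\frac{2^{m/\ell}}{2\ell}\Gamma\bigl(\tfrac{m}{\ell}+1\bigr)$ plus an integral over $[0,1/2]$ of $t^{m/\ell}e^{-t}$, which is absorbed into the equivalence. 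Since $2^{m/\ell}\Gamma(m/\ell+1)$ dominates the $r\le 1$ piece, one obtains~\eqref{eqn:coefficients:eq2}.

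The only real technical point is the termwise manipulation of the series defining $\mathfrak{h}^{\ell}_{b,1}(z^n)$; since $b\in H^{\infty,\ell}_1\subset F^{2,\ell}_\beta$ for every $\beta>1$ by Lemma~\ref{lem:propertiesF}, the Gaussian-type decay makes absolute convergence and the use of Fubini straightforward, and there is no genuine obstacle beyond careful bookkeeping.
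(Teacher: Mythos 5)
Your proof is correct and follows essentially the same route as the paper: for \eqref{eqn:coefficients:eq1} both compute $\mathfrak{h}^{\ell}_{b,1}$ on the monomial orthonormal basis via \eqref{eqn:kernel} and monomial orthogonality, reindex the double sum, and insert $\|z^m\|^2_{F^{2,\ell}_1}=\tfrac{\pi}{\ell}\Gamma\bigl(\tfrac{m+1}{\ell}\bigr)$ (note the constants in fact cancel exactly, so no stray factor $\pi/\ell$ survives); for \eqref{eqn:coefficients:eq2} both reduce to the radial integral, which the paper evaluates by writing $(1+r)^{2\ell-2}\simeq 1+r^{2\ell-2}$ and comparing the two resulting Gamma values, while your split at $r=1$ is an equivalent bookkeeping. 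No gaps.
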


\begin{proof}
We begin proving \eqref{eqn:coefficients:eq1}. Let  $e_n(z)=z^n/\|z^n\|_{F^{2,\ell}_1}$,  $n=0,1,\cdots$.
It is easy to check that
\begin{align*}
\overline{\mathfrak{h}^{\ell}_{b,1}(e_n)(z)}
=\sum_{m=0}^\infty c_{n+m}\frac{\|w^{m+n}\|_{F^{2,\ell}_1}^2}{\|w^{m}\|_{F^{2,\ell}_1}\|w^{n}\|_{F^{2,\ell}_1}}\,e_m(z).
\end{align*}
Thus
\begin{align*}
\|\mathfrak{h}^{\ell}_{b,1}\|^2_{S_2(F^{2,\ell}_1)}
&=\sum_{n=0}^\infty\|\mathfrak{h}^{\ell}_{b,1}(e_n)\|^2_{F^{2,\ell}_1}
=\sum_{n=0}^\infty \sum_{m=0}^\infty
|c_{n+m}|^2\frac{\|w^{m+n}\|_{F^{2,\ell}_1}^4}{\|w^{n}\|_{F^{2,\ell}_1}^2\|w^{m}\|_{F^{2,\ell}_1}^2}\\
&=\sum_{m=0}^\infty \sum_{n=0}^m
|c_{m}|^2\frac{\|w^{m}\|_{F^{2,\ell}_1}^4}{\|w^{n}\|_{F^{2,\ell}_1}^2\|w^{m-n}\|_{F^{2,\ell}_1}^2}\\
&=\sum_{m=0}^{\infty}|c_m|^2\,\,
\Gamma\Bigl(\frac{m+1}{\ell}\Bigr)^2\,
\sum_{k=0}^m\frac1{\Gamma\bigl(\frac{k+1}{\ell}\bigr)
    \Gamma\bigl(\frac{m-k+1}{\ell}\bigr)}.
\end{align*}

Next we prove \eqref{eqn:coefficients:eq2}:
\begin{align*}
\|b\|_{F^{2,\ell}_{1/2,\Delta}}^2&=\sum_{m=0}^\infty  |c_m|^2 \int_\C |z|^{2m}e^{-|z|^{2\ell}/2}(1+|z|^{2\ell-2})\,d\nu(z)\\
&\simeq\sum_{m=0}^\infty  |c_m|^2 \int_0^\infty r^{2m+1}(1+r^{2\ell-2})\,e^{-r^{2\ell}/2} dr\\
&\simeq \sum_{m=0}^\infty  |c_m|^2\, 2^{m/\ell}\,
\Bigl\{\Gamma\Bigl(\frac{m+1}{\ell}\Bigr)+
\Gamma\Bigl(\frac{m}{\ell}+1\Bigr)\Bigr\}\\
&\simeq \sum_{m=0}^\infty  |c_m|^2\, 2^{m/\ell}\,\Gamma\bigl(\frac{m}{\ell}+1\bigr).\qedhere
\end{align*}
\end{proof}

From Lemma~{\ref{lem:coefficients}} it is clear that~{\eqref{eqn:main:estimate}} is equivalent to
\[
\Gamma\Bigl(\frac{m+1}{\ell}\Bigr)^2\,
\sum_{k=0}^m\frac1{\Gamma\bigl(\frac{k+1}{\ell}\bigr)
    \Gamma\bigl(\frac{m-k+1}{\ell}\bigr)} \simeq
\,2^{m/\ell}\,
\Gamma\Bigl(\frac{m}{\ell}+1\Bigr)\qquad(m\ge0),
\]
which can be written as
\begin{equation}\label{eqn:estimate:coefficients}
\sum_{k=0}^m
\frac{\Gamma\bigl(\frac{m+2-\ell}{\ell}\bigr)}
       {\Gamma\bigl(\frac{k+1}{\ell}\bigr)
        \Gamma\bigl(\frac{m-k+1}{\ell}\bigr)} \simeq
\,2^{m/\ell}\,
\frac{\Gamma\Bigl(\frac{m+2-\ell}{\ell}\Bigr)
    \Gamma\Bigl(\frac{m}{\ell}+1\Bigr)}
{\Gamma\bigl(\frac{m+1}{\ell}\bigr)^2}\qquad(m\ge8\ell).
\end{equation}
Now, by Stirling's formula,
\[
\frac{\Gamma\Bigl(\frac{m+2-\ell}{\ell}\Bigr)
    \Gamma\Bigl(\frac{m}{\ell}+1\Bigr)}
{\Gamma\bigl(\frac{m+1}{\ell}\bigr)^2}\simeq1
\qquad(m\ge8\ell).
\]
Hence~{\eqref{eqn:estimate:coefficients}} follows from the
following lemma.
\begin{lem}\label{lem:estimate:coefficients}
\[
\sum_{k=0}^m
\frac{\Gamma\bigl(\frac{m+2-\ell}{\ell}\bigr)}
{\Gamma\bigl(\frac{k+1}{\ell}\bigr)
    \Gamma\bigl(\frac{m-k+1}{\ell}\bigr)}\simeq 2^{m/\ell}
\qquad(m\ge8\ell).
\]
\end{lem}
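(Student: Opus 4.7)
The plan is to estimate the sum by a Laplace-type argument. Write $a_k := 1/[\Gamma((k+1)/\ell)\Gamma((m-k+1)/\ell)]$ and $T_m := \sum_{k=0}^{m}a_k$, so the claim becomes $\Gamma((m+2-\ell)/\ell)\,T_m \simeq 2^{m/\ell}$. The sequence $(a_k)$ is symmetric about $k=m/2$, and I will show it is approximately Gaussian of width $\sqrt{m\ell}$ centered there, so the sum is dominated by a single peak whose value can be pinned down using the duplication formula.

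Set $x:=(m+2)/(2\ell)$, so at the midpoint $k=\lfloor m/2\rfloor$ Lemma~\ref{lem:Stirling}\eqref{item:Stirling1} gives $\Gamma((k+1)/\ell)\Gamma((m-k+1)/\ell)\simeq \Gamma(x)^2$ (independently of the parity of $m$, by writing the two arguments as $x\pm\tfrac{1}{2\ell}$). For the central region $|k-m/2|\le m/4$, I would apply Stirling's expansion $\log\Gamma(y)=(y-\tfrac12)\log y-y+\tfrac12\log(2\pi)+O(1/y)$ to $\log\Gamma(x+t)+\log\Gamma(x-t)$ with $t=(k-m/2)/\ell$, and after Taylor expansion obtain
\[
\log\frac{a_k}{a_{\lfloor m/2\rfloor}} = -\frac{2(k-m/2)^2}{m\ell} + O(1),
\]
uniformly for $m\ge 8\ell$. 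A Gaussian Riemann-sum estimate then yields
\[
\sum_{|k-m/2|\le m/4} a_k \simeq a_{\lfloor m/2\rfloor}\sqrt{m\ell}.
\]
For the tails $|k-m/2|>m/4$, Stirling combined with the strict entropy inequality $-\gamma\log\gamma-(1-\gamma)\log(1-\gamma)<\log 2$ (for $\gamma\ne 1/2$) gives $a_k\le a_{\lfloor m/2\rfloor}\,e^{-c\,m/\ell}$ for some $c>0$, so the tail contribution is negligible. Hence $T_m \simeq \sqrt{m\ell}/\Gamma(x)^2$.

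Finally, I would compute the ratio $\Gamma(2x-1)/\Gamma(x)^2$ via Legendre's duplication formula $\Gamma(2x) = \pi^{-1/2}\,2^{2x-1}\Gamma(x)\Gamma(x+\tfrac12)$ together with $\Gamma(x+\tfrac12)\simeq \sqrt x\,\Gamma(x)$ (Lemma~\ref{lem:Stirling}\eqref{item:Stirling1}), getting
\[
\frac{\Gamma(2x-1)}{\Gamma(x)^2} = \frac{1}{2x-1}\cdot\frac{\Gamma(2x)}{\Gamma(x)^2} \simeq \frac{2^{2x-1}\sqrt x}{(2x-1)\sqrt\pi} \simeq \frac{2^{m/\ell}}{\sqrt{m\ell}}.
\]
Since $2x-1 = (m+2-\ell)/\ell$, multiplying the two estimates gives
\[
\Gamma((m+2-\ell)/\ell)\,T_m \simeq \frac{2^{m/\ell}}{\sqrt{m\ell}}\cdot\sqrt{m\ell} = 2^{m/\ell},
\]
which is the required estimate.

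The main obstacle will be the uniform quadratic expansion of $\log(a_k/a_{\lfloor m/2\rfloor})$ on the central region; in particular the $O(1/y)$ Stirling error must be controlled, and the boundary indices (at most $O(\ell)$ of them on each side, where $(k+1)/\ell$ or $(m-k+1)/\ell$ is of order $1$ and the asymptotic is least accurate) need to be isolated and bounded directly before being absorbed into the tail region covered by the entropy estimate.
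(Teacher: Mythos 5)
Your argument is correct in outline but takes a genuinely different route from the paper. The paper writes $m=n\ell+r$ with $0\le r<\ell$, uses the monotonicity of $\Gamma$ on $[2,\infty)$ to compare each term of the sum with an integer-argument ratio $\Gamma(n+1)/(\Gamma(j+1)\Gamma(n-j+1))=\binom{n}{j}$, and then invokes Chernoff's inequality $\sum_{0\le j\le n/4}\binom{n}{j}\le 2^n e^{-n/8}$ to conclude that the central block $n/4<j<3n/4$ contributes $\simeq 2^n$ while the two tails and the boundary terms $j=0,n$ are negligible. Your Laplace-method argument replaces the reduction to binomial coefficients by a direct Gaussian approximation of $a_k$ around $k=m/2$ combined with the Legendre duplication formula; it avoids Chernoff's inequality and makes the $\sqrt{m\ell}$ width of the peak explicit, at the price of having to control Stirling errors uniformly. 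Your final bookkeeping is consistent (note $\sqrt{x}\simeq\sqrt{m/\ell}$, so $2^{2x-1}\sqrt{x}/((2x-1)\sqrt{\pi})$ is really $\simeq \ell\, 2^{m/\ell}/\sqrt{m\ell}$; this is harmless because the implied constants are allowed to depend on $\ell$).

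One intermediate claim is false as stated and must be weakened. The expansion $\log(a_k/a_{\lfloor m/2\rfloor})=-2(k-m/2)^2/(m\ell)+O(1)$ cannot hold uniformly on the whole bulk $|k-m/2|\le m/4$: writing $t=(k-m/2)/\ell$ and $f(t)=\log\Gamma(x+t)+\log\Gamma(x-t)$, the quartic term of the (even) Taylor expansion has size $t^4\psi'''(x)\simeq t^4/x^3$, which at $t=x/2$ is of order $x\simeq m/\ell$, not $O(1)$; the additive $O(1)$ form is only valid for $|t|\lesssim x^{3/4}$. What is true, and what your Riemann-sum step actually needs, is the two-sided bound $f(t)-f(0)\simeq t^2/x$ on $|t|\le x/2$, obtained from $f(t)-f(0)=\int_0^t(t-s)\bigl(\psi'(x+s)+\psi'(x-s)\bigr)\,ds$ together with $\psi'(x\pm s)\simeq 1/x$ there. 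This yields $a_{\lfloor m/2\rfloor}e^{-c_2(k-m/2)^2/(m\ell)}\le a_k\le a_{\lfloor m/2\rfloor}e^{-c_1(k-m/2)^2/(m\ell)}$ for some $0<c_1<c_2$, which still gives $\sum_{|k-m/2|\le m/4}a_k\simeq a_{\lfloor m/2\rfloor}\sqrt{m\ell}$ with $\ell$-dependent constants. With that correction, and with the $O(\ell)$ boundary indices handled as you indicate (using $1/\Gamma(y)\lesssim 1$ for $0<y\le 2$ before invoking the entropy bound on the rest of the tail), the proof goes through.
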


The key ingredient to prove
Lemma~{\ref{lem:estimate:coefficients}} is the following
important inequality.

\begin{Chernoff:inequality}[{\cite[(1.3.10) p.16]{dudley}}]
\[
\sum_{0\le i\le n/4}\binom{n}{i}\le 2^n e^{-{n/8}},
\quad\mbox{ for every $n\ge0$.}
\]
\end{Chernoff:inequality}

\begin{proof}[Proof of
Lemma~{\ref{lem:estimate:coefficients}}]
Let $m=n\ell+r$, where $n\ge8$ and $0\le r<\ell$.
Then we may decompose the sum $S(m)$ of the statement as
\begin{eqnarray*}
S(m)
&=& \sum_{j=0}^{n-1}\sum_{s=0}^{\ell-1}
    \frac{\Gamma\bigl(\frac{m+2-\ell}{\ell}\bigr)}
    {\Gamma\bigl(\frac{j\ell+s+1}{\ell}\bigr)
        \Gamma\bigl(\frac{m-(j\ell+s)+1}{\ell}\bigr)}
   +\sum_{s=0}^r
  \frac{\Gamma\bigl(\frac{m+2-\ell}{\ell}\bigr)}
   {\Gamma\bigl(\frac{n\ell+s+1}{\ell}\bigr)
    \Gamma\bigl(\frac{m-(n\ell+s)+1}{\ell}\bigr)}\\
&=& \sum_{s=0}^{\ell-1}\sum_{j=0}^{n-1}
\frac{\Gamma\bigl(n-1+\frac{r+2}{\ell}\bigr)}
{\Gamma\bigl(j+\frac{s+1}{\ell}\bigr)
    \Gamma\bigl(n-j+\frac{r-s+1}{\ell}\bigr)}
+\sum_{s=0}^r
\frac{\Gamma\bigl(n-1+\frac{r+2}{\ell}\bigr)}
{\Gamma\bigl(n+\frac{s+1}{\ell}\bigr)
    \Gamma\bigl(\frac{r-s+1}{\ell}\bigr)}\\
&=& \sum_{s=0}^{\ell-1}
\frac{\Gamma\bigl(n-1+\frac{r+2}{\ell}\bigr)}
{\Gamma\bigl(\frac{s+1}{\ell}\bigr)
    \Gamma\bigl(n+\frac{r-s+1}{\ell}\bigr)} +
   \sum_{s=0}^r
   \frac{\Gamma\bigl(n-1+\frac{r+2}{\ell}\bigr)}
   {\Gamma\bigl(n+\frac{s+1}{\ell}\bigr)
    \Gamma\bigl(\frac{r-s+1}{\ell}\bigr)}\\
& & + \sum_{s=0}^{\ell-1}
   \Bigl\{\sum_{1\le j\le\frac{n}4}+
          \sum_{\frac{n}4<j<\frac{3n}4}+
          \sum_{\frac{3n}4\le j\le n-1} \Bigr\}
\frac{\Gamma\bigl(n-1+\frac{r+2}{\ell}\bigr)}
{\Gamma\bigl(j+\frac{s+1}{\ell}\bigr)
    \Gamma\bigl(n-j+\frac{r-s+1}{\ell}\bigr)}\\
&=& S_1(m)+S_2(m)+S_3(m)+S_4(m)+S_5(m).
\end{eqnarray*}
In order to estimate the above five sums we recall that
$\Gamma$ is an increasing function on $[2,\infty)$.
Then, since $\frac{r+2}{\ell}\le2$, we have that
\begin{equation}\label{estimate:numerator}
\Gamma\bigl(n-1+\tfrac{r+2}{\ell})\le\Gamma(n+1).
\end{equation}
On the other hand, since
\[
\frac2{\ell}-1\le\frac{r-s+1}{\ell}\le1
\quad\mbox{ and }\quad
\frac1{\ell}\le\frac{s+1}{\ell}\le1
\qquad(0\le s<\ell),
\]
we also have that
\begin{equation}\label{estimate:denominator1}
\Gamma(n-j-1)\le\Gamma\bigl(n-j+\tfrac{r-s+1}{\ell}\bigr)
\qquad(0\le s<\ell,\,0\le j\le n-3),
\end{equation}
and
\begin{equation}\label{estimate:denominator2}
\Gamma(j)\le\Gamma\bigl(j+\tfrac{s+1}{\ell}\bigr)
\qquad(2\le j,\,0\le s<\ell).
\end{equation}
Now~{\eqref{estimate:numerator}} and~{\eqref{estimate:denominator1}} imply that
\begin{equation}\label{estimate:S1}
S_1(m)\lesssim\frac{\Gamma(n+1)}{\Gamma(n-1)}=n(n-1)
\lesssim 2^n\simeq 2^{m/\ell},
\end{equation}
and, in particular,
\begin{equation}\label{estimate:S2}
S_2(m)
=\sum_{s=0}^r
\frac{\Gamma\bigl(n-1+\frac{r+2}{\ell}\bigr)}
{\Gamma\bigl(n+\frac{r-s+1}{\ell}\bigr)
    \Gamma\bigl(\frac{s+1}{\ell}\bigr)}
\le S_1(m)\lesssim 2^{m/\ell}.
\end{equation}
Moreover, by~{\eqref{estimate:numerator}},
\eqref{estimate:denominator1}
and~{\eqref{estimate:denominator2}} we have that
\begin{eqnarray*}
S_3(m)+S_5(m)
&\lesssim& \sum_{1\le j\le\frac{n}4}
         \frac{\Gamma(n+1)}{\Gamma(j)\Gamma(n-j+1)}
         +\sum_{\frac{3n}4\le j\le n-1}
         \frac{\Gamma(n+1)}{\Gamma(j)\Gamma(n-j+1)}\\
&=& \sum_{1\le j\le\frac{n}4}
\frac{\Gamma(n+1)}{\Gamma(j)\Gamma(n-j+1)}
+\sum_{1\le j\le\frac{n}4}
\frac{\Gamma(n+1)}{\Gamma(n-j)\Gamma(j+1)}\\
&=& \sum_{1\le j\le\frac{n}4}
\frac{\Gamma(n+1)}{\Gamma(j)\Gamma(n-j+1)}\,
\left(1+\frac{n-j}{j}\right)\\
&=&n\sum_{1\le j\le\frac{n}4}
\frac{\Gamma(n+1)}{\Gamma(j+1)\Gamma(n-j+1)}
\le n\sum_{0\le j\le\frac{n}4}\binom{n}{j}.
\end{eqnarray*}
So Chernoff's inequality gives
\begin{equation}\label{estimate:S3-S5}
S_3(m)+S_5(m)\lesssim n\,e^{-n/8}\,2^n\lesssim 2^n
\simeq 2^{m/\ell}.
\end{equation}
To estimate $S_4(m)$ we apply Lemma~{\ref{lem:Stirling}} and
we obtain that
\begin{eqnarray*}
S_4(m)
&\simeq&
\sum_{s=0}^{\ell-1}
\sum_{\frac{n}4<j<\frac{3n}4}
\frac{(n-1)^{\frac{r+2}{\ell}}}
     {j^{\frac{s+1}{\ell}}(n-j)^{\frac{r-s+1}{\ell}}}\,
     \frac{\Gamma(n-1)}{\Gamma(j)\Gamma(n-j)}\\
&\simeq& \sum_{\frac{n}4<j<\frac{3n}4} \frac{\Gamma(n-1)}{\Gamma(j)\Gamma(n-j)}
\simeq \sum_{\frac{n}4<j<\frac{3n}4}
\binom{n}{j}=2^n-2\sum_{0\le j\le\frac{n}4}\binom{n}{j}.
\end{eqnarray*}
Therefore Chernoff's inequality shows that
\begin{equation}\label{estimate:S4}
S_4(m)\simeq 2^n\simeq 2^{m/\ell}.
\end{equation}
By \eqref{estimate:S1}, \eqref{estimate:S2},
 \eqref{estimate:S3-S5} and \eqref{estimate:S4}
 we conclude that $S(m)\simeq 2^{m/\ell}$, and the proof
 is complete.
\end{proof}

\subsection{The small Hankel operator on $L\dla$}\quad\par

In this section we characterize the membership of $\hvla$ to the Hilbert-Schmidt class  $\mathcal{S}_2(L^{2,\ell}_{\alpha})$ of $L^{2,\ell}_{\alpha}$.

Let $L^2_\Delta:=L^2(\C, (1+|z|)^{2(\ell-1)}\,d\nu)$.
Then we have:

\begin{thm}\label{thm:HSL2}
$\mathfrak{h}^{\ell}_{\varphi,\alpha}
\in \mathcal{S}_2(L^{2,\ell}_{\alpha})$ if and only if $\varphi \in
L^2_\Delta$.
Moreover,
\[
\|\mathfrak{h}^{\ell}_{\varphi,\alpha}\|_{\mathcal{S}_2(L^{2,\ell}_{\alpha})}
\simeq\| \varphi\|_{L^2_\Delta}.
\]
 In particular, if
$\varphi \in L^2_\Delta$, then
$\mathfrak{h}^{\ell}_{\varphi,\alpha}\in\mathcal{S}_2(F^{2,\ell}_{\alpha})$ and
$\|\mathfrak{h}^{\ell}_{\varphi,\alpha}\|_
{\mathcal{S}_2(F^{2,\ell}_{\alpha})}\lesssim
\| \varphi\|_{L^2_\Delta}.$
\end{thm}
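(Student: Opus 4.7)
The plan is to reduce the Hilbert-Schmidt norm computation to a one-variable integral via the reproducing property of the Bergman kernel. By~\eqref{eqn:def:small:Hankel}, the small Hankel operator $\mathfrak{h}^{\ell}_{\varphi,\alpha}$ may be regarded as the integral operator on $L^{2,\ell}_{\alpha}$ with kernel $K(z,w):=K^{\ell}_{\alpha}(w,z)\overline{\varphi(w)}$ against the weighted measure $e^{-\alpha|w|^{2\ell}}\,d\nu(w)$. The standard formula for the Hilbert-Schmidt norm of such an integral operator then gives
$$
\|\mathfrak{h}^{\ell}_{\varphi,\alpha}\|^2_{\mathcal{S}_2(L^{2,\ell}_{\alpha})}
=\int_{\C}\int_{\C}|K^{\ell}_{\alpha}(w,z)|^2|\varphi(w)|^2\,e^{-\alpha|z|^{2\ell}}e^{-\alpha|w|^{2\ell}}\,d\nu(z)\,d\nu(w).
$$

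Next I would apply Fubini--Tonelli and evaluate the inner integral via the reproducing kernel identity
$$
\int_{\C}|K^{\ell}_{\alpha}(w,z)|^2\,e^{-\alpha|z|^{2\ell}}\,d\nu(z)
=\|K^{\ell}_{\alpha,w}\|^2_{F^{2,\ell}_{\alpha}}
=K^{\ell}_{\alpha}(w,w),
$$
which collapses the double integral to
$$
\|\mathfrak{h}^{\ell}_{\varphi,\alpha}\|^2_{\mathcal{S}_2(L^{2,\ell}_{\alpha})}
=\int_{\C}|\varphi(w)|^2\,K^{\ell}_{\alpha}(w,w)\,e^{-\alpha|w|^{2\ell}}\,d\nu(w).
$$

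The main step is then the on-diagonal estimate $K^{\ell}_{\alpha}(w,w)\,e^{-\alpha|w|^{2\ell}}\simeq (1+|w|)^{2(\ell-1)}$. Starting from the Mittag-Leffler representation~\eqref{eqn:KH}, since $\alpha^{1/\ell}|w|^2$ is a non-negative real number lying in the principal sector $|\arg\lambda|\le \pi/(2\ell)$, the asymptotic~\eqref{eqn:asimpE} yields $E_{1/\ell,1/\ell}(\alpha^{1/\ell}|w|^2)\simeq \ell\,\alpha^{(\ell-1)/\ell}\,|w|^{2\ell-2}\,e^{\alpha|w|^{2\ell}}$ for $|w|$ large, while for $|w|$ bounded both sides are comparable to a positive constant. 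Equivalently, one may apply Lemma~\ref{lem:Stirling}\eqref{item:Stirling2} directly to the series~\eqref{eqn:kernel} at $z=w$. Plugging this estimate into the last display yields $\|\mathfrak{h}^{\ell}_{\varphi,\alpha}\|^2_{\mathcal{S}_2(L^{2,\ell}_{\alpha})}\simeq \|\varphi\|^2_{L^2_\Delta}$, which settles both directions of the main claim at once.

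For the final assertion, since $F^{2,\ell}_{\alpha}$ is a closed subspace of $L^{2,\ell}_{\alpha}$, the restriction of any Hilbert-Schmidt operator to it is again Hilbert-Schmidt with norm no larger than the original; applying this to $\mathfrak{h}^{\ell}_{\varphi,\alpha}$ gives $\|\mathfrak{h}^{\ell}_{\varphi,\alpha}\|_{\mathcal{S}_2(F^{2,\ell}_{\alpha})}\lesssim \|\varphi\|_{L^2_\Delta}$. No substantial obstacle is expected: the proof rests entirely on the reproducing kernel identity together with the Mittag-Leffler diagonal asymptotics, both of which are already available in the paper. The only point requiring a touch of care is justifying Fubini in the original double integral, but this is automatic by Tonelli's theorem since the integrand is non-negative.
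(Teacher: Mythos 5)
Your proposal is correct and follows essentially the same route as the paper: identify $\mathfrak{h}^{\ell}_{\varphi,\alpha}$ as an integral operator on $L^{2,\ell}_{\alpha}$ with kernel $K^{\ell}_{\alpha}(w,z)\overline{\varphi(w)}$, compute the Hilbert--Schmidt norm as the $L^2$ norm of the kernel, collapse the inner integral via the reproducing identity to $K^{\ell}_{\alpha}(w,w)$, and invoke the Mittag--Leffler diagonal asymptotics $K^{\ell}_{\alpha}(w,w)e^{-\alpha|w|^{2\ell}}\simeq(1+|w|)^{2(\ell-1)}$ (your exponent is the correct one; the paper's displayed estimate has a typo reading $(1+|w|)^{\ell-1}$ though its final line agrees with yours). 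The concluding restriction-to-a-closed-subspace remark is exactly how the paper's ``in particular'' is meant to be read.
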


\begin{proof}
Note that
\[
\mathfrak{h}^\ell_{\varphi,\alpha}(f)(z):=\overline{P\la(\overline{f}
\varphi)(z)}=
\int_{\C}K\la(w,z)f(w)\,\overline{\varphi}(w)\,e^{-\alpha|w|^{2\ell}}\,d\nu(w)
\]
is an integral operator with respect to the positive measure
$e^{-\alpha|w|^{2\ell}}\,d\nu(w)$ and whose integral kernel is
$K^\ell_\alpha(w,z)\overline{\varphi(w)}$. So it is well known
(see~{\cite[Theorem~3.5]{Zhu2007}}, for example) that
\begin{equation*}\begin{split}
\|\mathfrak{h}^{\ell}_{\varphi,\alpha}\|_{\mathcal{S}_2(L^{2,\ell}_{\alpha/2})}^2
&=\int_{\C}\biggl(\int_{\C}
|K^\ell_\alpha(w,z)|^2|\varphi(w)|^2\,e^{-\alpha|w|^{2\ell}}\,d\nu(w)\biggr)
e^{-\alpha|z|^{2\ell}}\,d\nu(z) \\
& =\int_{\C}|\varphi(w)|^2
\biggl(\int_{\C}|K^\ell_\alpha(w,z)|^2e^{-\alpha|z|^{2\ell}}\,d\nu(z)\biggr)
e^{-\alpha|w|^{2\ell}}\,d\nu(w)\\
&=  \int_{\C} |\varphi(w)|^2K\la(w,w)e^{-\alpha|w|^{2\ell}}\,d\nu(w)\\
&\simeq  \int_{\C} |\varphi(w)|^2(1+|w|)^{2(\ell-1)}\,d\nu(w),
\end{split}\end{equation*}
where the last equivalence follows from
$ K\la(w,w)=H\la(|w|^2)\simeq (1+|w|)^{\ell-1}e^{\alpha|w|^{2\ell}}$
 (see \eqref{eqn:KH} and \eqref{eqn:asimpE}).
  And that's all.
\end{proof}

Finally we show that the space of Hilbert-Schmidt symbols for $F\dla$ is just the projection of the space of Hilbert-Schmidt symbols for $L\dla$.

\begin{prop}\label{prop:Ptap}
     The projection $P\la$ is
    bounded from $L^2_\Delta$ onto
    $F^{2,\ell}_{\alpha/2,\Delta}$.
\end{prop}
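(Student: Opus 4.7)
The plan is to mimic the three-step factorization of $P\la$ used in the proof of Proposition \ref{prop:PtaLinf}. Let $T_\alpha\varphi(w):=\varphi(w)\,e^{\alpha|w|^{2\ell}}$, and let $L^{2,\ell}_{2\alpha,\Delta}$ denote the (non-holomorphic) weighted space with norm-squared $\int_\C|g(w)|^2\,e^{-2\alpha|w|^{2\ell}}(1+|w|)^{2(\ell-1)}\,d\nu(w)$. Then $T_\alpha$ is an isometric bijection $L^2_\Delta\to L^{2,\ell}_{2\alpha,\Delta}$, and the identity $K\la(z,w)=2^{-1/\ell}K^\ell_{2\alpha}(2^{-1/\ell}z,w)$---which follows directly from \eqref{eqn:kernel}---gives
\[
P\la=2^{-1/\ell}\,\Phi^\ell_{1/4}\circ P^\ell_{2\alpha}\circ T_\alpha,
\]
exactly as in Proposition \ref{prop:PtaLinf}. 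A routine change of variable $u=2^{-1/\ell}z$, using $(1+2^{1/\ell}|u|)^{2(\ell-1)}\simeq(1+|u|)^{2(\ell-1)}$, shows that $\Phi^\ell_{1/4}$ is an isomorphism from $F^{2,\ell}_{2\alpha,\Delta}$ onto $F^{2,\ell}_{\alpha/2,\Delta}$. Surjectivity of $P\la$ is then automatic: given $b\in F^{2,\ell}_{\alpha/2,\Delta}$, take $g:=2^{1/\ell}(\Phi^\ell_{1/4})^{-1}b\in F^{2,\ell}_{2\alpha,\Delta}\subset L^{2,\ell}_{2\alpha,\Delta}$; since $g$ is holomorphic, $P^\ell_{2\alpha}g=g$, and $\varphi:=T_\alpha^{-1}g\in L^2_\Delta$ satisfies $P\la\varphi=b$.

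The main obstacle is therefore the boundedness of $P^\ell_{2\alpha}$ from $L^{2,\ell}_{2\alpha,\Delta}$ into $F^{2,\ell}_{2\alpha,\Delta}$. To establish it we exploit the radial structure of the weight. Expanding $g\in L^{2,\ell}_{2\alpha,\Delta}\subset L^{2,\ell}_{2\alpha}$ in an angular Fourier series $g(re^{i\theta})=\sum_{k\in\Z}g_k(r)e^{ik\theta}$, writing $P^\ell_{2\alpha}g=\sum_{m\ge0}c_m z^m$, and using orthogonality of monomials together with radial integration yields
\[
c_m\,\|z^m\|^2_{F^{2,\ell}_{2\alpha}}=\langle g,z^m\rangle^\ell_{2\alpha}=2\pi\int_0^\infty g_m(r)\,r^{m+1}\,e^{-2\alpha r^{2\ell}}\,dr.
\]
Splitting the integrand as $[g_m(r)(1+r)^{\ell-1}e^{-\alpha r^{2\ell}}r^{1/2}]\cdot[r^{m+1/2}(1+r)^{-(\ell-1)}e^{-\alpha r^{2\ell}}]$ and applying Cauchy--Schwarz gives $|c_m|^2\|z^m\|^4_{F^{2,\ell}_{2\alpha}}\le 4\pi^2 G_mB_m$, where $G_m:=\int_0^\infty|g_m(r)|^2(1+r)^{2(\ell-1)}e^{-2\alpha r^{2\ell}}r\,dr$ and $B_m:=\int_0^\infty r^{2m+1}(1+r)^{-2(\ell-1)}e^{-2\alpha r^{2\ell}}dr$. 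Summing over $m\ge 0$ and invoking Parseval $\|g\|^2_{L^{2,\ell}_{2\alpha,\Delta}}=2\pi\sum_{k\in\Z}G_k$, one obtains
\[
\|P^\ell_{2\alpha}g\|^2_{F^{2,\ell}_{2\alpha,\Delta}}=\sum_{m\ge0}|c_m|^2\|z^m\|^2_{F^{2,\ell}_{2\alpha,\Delta}}\le\Bigl(\sup_{m\ge0}R_m\Bigr)\|g\|^2_{L^{2,\ell}_{2\alpha,\Delta}},
\]
with $R_m:=2\pi B_m\,\|z^m\|^2_{F^{2,\ell}_{2\alpha,\Delta}}/\|z^m\|^4_{F^{2,\ell}_{2\alpha}}$.

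It remains only to verify $\sup_m R_m<\infty$, which is the sole non-trivial calculation in the proof. Using Proposition \ref{prop:kernel} for $\|z^m\|^2_{F^{2,\ell}_{2\alpha}}$ and explicit evaluation in polar coordinates for $B_m$ and $\|z^m\|^2_{F^{2,\ell}_{2\alpha,\Delta}}$ (as radial $\Gamma$-integrals), all powers of $2\alpha$ cancel identically, and Lemma \ref{lem:Stirling}(i) collapses the remaining ratio of $\Gamma$-factors to a bounded expression; indeed, writing $n=m/\ell$, one gets $\Gamma(n+2/\ell-1)\Gamma(n+1)/\Gamma(n+1/\ell)^2\simeq 1$. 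Hence $R_m\simeq 1$ for $m$ large, while finitely many small $m$ are trivial, and the proof is complete.
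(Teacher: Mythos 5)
Your proof is correct, but it takes a genuinely different route from the paper's on the boundedness half. The paper deduces boundedness in two lines from the Hilbert--Schmidt machinery already in place: by Theorem~\ref{thm:HSL2} and Theorem~\ref{thm:main4},
\[
\|P\la\varphi\|^2_{F^{2,\ell}_{\alpha/2,\Delta}}\simeq
\sum_m\|\mathfrak{h}^{\ell}_{P\la(\varphi),\alpha}(e_m)\|^2_{F\dla}\le
\|\hvla\|^2_{\mathcal{S}_2(L\dla)}\simeq\|\varphi\|^2_{L^2_\Delta},
\]
so the whole burden is carried by Lemma~\ref{lem:estimate:coefficients} (hence Chernoff's inequality), which the paper needs anyway for Theorem~\ref{thm:main4}. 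You instead give a direct, self-contained argument: the dilation factorization $P\la=2^{-1/\ell}\Phi^{\ell}_{1/4}\circ P^{\ell}_{2\alpha}\circ T_{\alpha}$ reduces everything to $P^{\ell}_{2\alpha}:L^{2,\ell}_{2\alpha,\Delta}\to F^{2,\ell}_{2\alpha,\Delta}$, which you handle by Cauchy--Schwarz on each angular Fourier mode; the only asymptotic input is $\Gamma\bigl(\tfrac{m+2-\ell}{\ell}\bigr)\Gamma\bigl(\tfrac{m}{\ell}+1\bigr)/\Gamma\bigl(\tfrac{m+1}{\ell}\bigr)^2\simeq1$, which is exactly the Stirling ratio the paper also isolates just before Lemma~\ref{lem:estimate:coefficients}, and you avoid the combinatorial sum and Chernoff's inequality entirely. (Two small points worth making explicit if you write this up: for the bound $\sup_mR_m<\infty$ you only need the \emph{upper} estimate $B_m\le\frac1{2m+2}+\int_0^\infty r^{2m+3-2\ell}e^{-4\alpha r^{2\ell}}\,dr$, since $(1+r)^{-2(\ell-1)}\le r^{-2(\ell-1)}$, the first term being negligible; and the identity $\|P^{\ell}_{2\alpha}g\|^2_{F^{2,\ell}_{2\alpha,\Delta}}=\sum_m|c_m|^2\|z^m\|^2_{F^{2,\ell}_{2\alpha,\Delta}}$ uses orthogonality of monomials against the radial weight plus monotone convergence.) The surjectivity halves of the two proofs are essentially identical: both produce the explicit preimage $\varphi(u)=2^{1/\ell}b(2^{1/\ell}u)e^{-\alpha|u|^{2\ell}}$ via the kernel dilation identity. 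What each approach buys: yours makes Proposition~\ref{prop:Ptap} independent of Section~\ref{sect:ProofThm4}'s Hilbert--Schmidt computations and could be stated much earlier; the paper's is shorter given its placement after Theorem~\ref{thm:main4}.
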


\begin{proof}
    Let $\{e_m\}_{m\in\N}$ be an orthonormal basis of $F\dla$ and let $\{u_m\}_{m\in\N}$ be an orthonormal basis of the orthogonal of $F\dla$ in $L\dla$.

    By Theorems
 \ref{thm:HSL2} and \ref{thm:main4}, we have, for any $\varphi\in L^2_\Delta$,
\begin{align*}
    \|\varphi\|_{L^2_\Delta}^2\simeq\|\varphi\|_{S_2(L\dla)}^2 &=\sum_{m=1}^\infty \|\hvla(e_m)\|_{L\dla}^2+
    \sum_{m=1}^\infty \|\hvla (u_m)\|_{L\dla}^2\\
    &\ge
    \sum_{m=1}^\infty \|\mathfrak{h}^{\ell}_{P\la(\varphi),\alpha}(e_m)\|_{F\dla}^2\simeq
    \|P\la(\varphi)\|_{F^{2,\ell}_{\alpha/2,\Delta}}^2.
\end{align*}
So we have just proved that $P\la:L^2_\Delta \to
F^{2,\ell}_{\alpha/2,\Delta}$ is bounded.

Let $b\in F^{2,\ell}_{\alpha/2,\Delta}$. Since
$F^{2,\ell}_{\alpha/2,\Delta}\subset F^{2,\ell}_{\alpha/2}$,  we
have $b=P_{\alpha/2}^\ell(b)$, by \eqref{eqn:reproducing:property}.
By \eqref{eqn:kernel}, $K_{\alpha/2}^\ell(z,w)=2^{-1/\ell}
K^\ell_\alpha(z,2^{-1/\ell}w)$, so
\begin{align*}
b(z)&=2^{-1/\ell} \int_\C K^\ell_\alpha\bigl(z,2^{-1/\ell}w\bigr) b(w) e^{-\frac{\alpha}2|w|^{2\ell}}d\nu(w)\\
&=2^{1/\ell} \int_\C K^\ell_\alpha(z,u) b(2^{1/\ell}u)
e^{-2\alpha|u|^{2\ell}}d\nu(u)=P\la(\varphi)(z),
\end{align*}
where
$\varphi(u)=2^{1/\ell} b(2^{1/\ell}u) e^{-\frac{\alpha}4|2^{1/\ell}u|^{2\ell}}$,
which clearly belongs to $L^2_\Delta$.
    \end{proof}

\end{document}